\newtheorem{theirtheorem}{Theorem}
\theoremstyle{plain}
\newtheorem{thm}{\textbf{Theorem}}[section]
\newtheorem{lem}[thm]{\textbf{Lemma}}
\newtheorem{claim}{\textbf{Claim}}
\newtheorem*{Freiman-3k-4}{\textbf{Freiman $3k-4$ Theorem}}
\newtheorem*{Freiman-3k-3}{\textbf{Freiman $3k-3$ Theorem}}
\newcommand{\Sum}[2]{\underset{#1}{\overset{#2}{\sum}}}
\newcommand{\Z}{\mathbb{Z}}
\newcommand{\be}{\begin{equation}}
\newcommand{\ee}{\end{equation}}
\newcommand{\conv}{\mbox{Conv}}
\newcommand{\C}{\mathbf{C}}
\newcommand{\R}{\mathbb{R}}
\newcommand{\ber}{\begin{eqnarray}}
\newcommand{\eer}{\end{eqnarray}}
\newcommand{\nn}{\nonumber}
\begin{document}

\title{Properties of Two Dimensional Sets with Small Sumset}
\author{David Grynkiewicz
\thanks{Supported in part by the National Science
  Foundation, as an MPS-DRF postdoctoral fellow, under grant DMS-0502193.}
 \and Oriol Serra\thanks{Supported by the
Spanish Research Council under project
  MTM2005-08990-C02-01 and by the Catalan Research Council under
  project 2005SGR00256}}

\maketitle

\begin{abstract}Let $A,\,B\subseteq \mathbb{R}^2$ be finite,
nonempty subsets, let $s\geq 2$ be an integer, and let $h_1(A,B)$
denote the minimal number $t$ such that there exist $2t$ (not
necessarily distinct) parallel lines,
$\ell_1,\ldots,\ell_{t},\ell'_1,\ldots,\ell'_{t}$, with $A\subseteq
\bigcup_{i=1}^{t}\ell_i$ and $B\subseteq\bigcup_{i=1}^{t}\ell'_i$.
Suppose $h_1(A,B)\geq s$. Then we show that:

(a) if $||A|-|B||\leq s$ and $|A|+|B|\geq 4s^2-6s+3$, then
$$|A+B|\geq (2-\frac 1 s)(|A|+|B|)-2s+1;$$

(b) if $|A|\geq |B|+s$ and $|B|\geq 2s^2-\frac{7}{2}s+\frac{3}{2}$,
then
$$|A+B|\geq |A|+(3-\frac 2 s)|B|-s;$$

(c) if $|A|\geq \frac{1}{2}s(s-1)|B|+s$ and either $|A|>
\frac{1}{8}(2s-1)^2|B|-\frac{1}{4}(2s-1)+\frac{(s-1)^2}{2(|B|-2)}$
or $|B|\geq \frac{2s+4}{3}$, then
$$|A+B|\geq |A|+s(|B|-1).$$
This extends the $2$-dimensional case of the Freiman $2^d$--Theorem
to distinct sets $A$ and $B$, and, in the symmetric case $A=B$,
improves the best prior known bound for $|A|+|B|$ (due to
Stanchescu, and which was cubic in $s$) to an  exact
value.

As part of the proof, we give general lower bounds for two
dimensional subsets that improve the $2$-dimensional case of
estimates of Green and Tao and of Gardner and Gronchi, and that
generalize the $2$-dimensional case of the Brunn-Minkowski Theorem.

\end{abstract}

\section{Introduction}

\indent\indent Given a pair of finite subsets $A$ and $B$ of an
abelian group $G$, their Minkowski sum, or simply sumset, is
$A+B=\{a+b\mid a\in A,\,b\in B\}$. Furthermore, if $G=\mathbb{R}^d$
and $H$ is a subspace, then we let $\phi_H:\R^d\rightarrow \R^d/H$
denote the natural projection modulo $H$, and we let $h_{d-1}(A,B)$
be the minimal number $s$ such that there exist $2s$ (not
necessarily distinct) parallel hyperplanes,
$H_1,\ldots,H_{s},H'_1,\ldots,H'_{s}$, with $A\subseteq
\bigcup_{i=1}^{s}H_i$ and $B\subseteq\bigcup_{i=1}^{s}H'_i$.
Alternatively, $h_{d-1}(A,B)$ is the minimal $s$ such that there
exists a $(d-1)$--dimensional subspace $H$ with
$|\phi_H(A)|,\,|\phi_H(B)|\leq s$.

It is the central goal of inverse additive theory to describe the
structure of sumsets and their summands. One of the most classical
results is the Freiman $2^d$--Theorem \cite{F1} \cite{bilu}
\cite{Natbook} \cite{taobook}, which says that a subset of $\R^d$
with small sumset must be contained in a small number of parallel
hyperplanes.

\begin{theirtheorem}[Freiman $2^d$--Theorem]\label{F2d-theorem} Let
$d\geq 2$ be an integer and let $0<c<2^d$. There exist constants
$k=k(c,d)$ and $s=s(c,d)$ such that if $A\subseteq \R^d$ is a
finite, nonempty subset satisfying $|A|\geq k$ and $|A+A|< c|A|$,
then $h_{d-1}(A,A)< s$.
\end{theirtheorem}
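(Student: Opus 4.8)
The plan is to establish the equivalent quantitative statement: for each $c<2^d$ there exist $k=k(c,d)$ and $s=s(c,d)$ so that if $A\subseteq\R^d$ is finite and nonempty with $|A|\geq k$ and $h_{d-1}(A,A)\geq s$, then $|A+A|\geq c|A|$; I would prove this by induction on $d$, the base case $d=1$ being immediate from $|A+A|\geq 2|A|-1$ together with $c<2$. For the inductive step, first note that if $A$ does not affinely span $\R^d$ then it lies in a single hyperplane, so $h_{d-1}(A,A)=1$ and there is nothing to prove once $s\geq 2$; hence assume $A$ affinely spans $\R^d$. Choose a linear functional $\xi\colon\R^d\to\R$ and slice $A=A_1\sqcup\cdots\sqcup A_m$ into its fibres $A_i=A\cap\xi^{-1}(t_i)$ with $t_1<\cdots<t_m$, each $A_i$ regarded as a subset of $\R^{d-1}$ after identifying its hyperplane (via projection onto $\ker\xi$ along a fixed complementary line) with a fixed copy of $\R^{d-1}$. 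Since $h_{d-1}(A,A)\geq s$ is the same as $|\phi_H(A)|\geq s$ for \emph{every} $(d-1)$-dimensional subspace $H$, taking $H=\ker\xi$ gives $m\geq s$; and because a $(d-2)$-subspace of $\R^{d-1}$ pulls back to a $(d-1)$-subspace of $\R^d$, the shadow $\overline A\subseteq\R^{d-1}$ of $A$ onto $\ker\xi$ likewise satisfies $h_{d-2}(\overline A,\overline A)\geq s$, so the hypothesis descends one dimension.

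The engine is a discrete Brunn--Minkowski-type inequality. Slicing $A+A$ by $\xi$, the values $2t_1<t_1+t_2<2t_2<\cdots<t_{m-1}+t_m<2t_m$ are $2m-1$ distinct levels, with $A_i+A_i$ inside level $2t_i$ and $A_i+A_{i+1}$ inside level $t_i+t_{i+1}$, so
\[ |A+A|\ \geq\ \sum_{i=1}^{m}|A_i+A_i|\ +\ \sum_{i=1}^{m-1}|A_i+A_{i+1}|. \]
The inductive input I would prove in tandem is that for full-dimensional finite $X,Y\subseteq\R^{d-1}$ which are spread out in every direction ($h_{d-2}$ large), $|X+Y|^{1/(d-1)}\geq|X|^{1/(d-1)}+|Y|^{1/(d-1)}$ up to a lower-order error; this is exactly where largeness of $|A|$ and of $h_{d-2}$ is used, the inequality being plainly false for small or flat sets. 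With $X=Y=A_i$ this makes $|A_i+A_i|$ at least roughly $2^{d-1}|A_i|$, and in the balanced case the terms $|A_i+A_{i+1}|$ supply a second, comparable gain distributed over the $m\geq s$ consecutive levels, so the right-hand side above reaches roughly $2^d|A|$; taking $s(c,d)$ and $k(c,d)$ large then forces the accumulated error below $(2^d-c)|A|$. The strictness $c<2^d$ is unavoidable: dilates of a lattice simplex have $|A+A|=(2^d-o(1))|A|$ while being full-dimensional with $h_{d-1}(A,A)\to\infty$.

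The main obstacle is that the fibres $A_i$ need be neither full-dimensional inside their hyperplanes, nor spread out, nor comparable in size, so the clean Brunn--Minkowski step does not apply to them as they stand. When a fibre is flat one re-slices it with a fresh functional and recurses the whole scheme one dimension lower, reassembling the pieces; when the sizes $|A_i|$ are badly unbalanced one instead re-chooses $\xi$, using that $h_{d-1}(A,A)\geq s$ holds in \emph{every} direction to find a slicing that is at once long (at least $s$ fibres) and balanced enough for the estimate. Setting up this case analysis cleanly --- really a simultaneous induction on $d$ for the pair inequality, with explicit control of the error terms so that $k(c,d)$ and $s(c,d)$ can be named --- is where essentially all the difficulty sits; the guiding heuristic is only that a convex body $K$ has $|K+K|=2^d|K|$, so a set with $|A+A|<2^d|A|$ must fail to be genuinely full-dimensional in the sense measured by $h_{d-1}$. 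An alternative that exchanges this bookkeeping for a different difficulty is the compression method of Green and Tao: successively compress $A$ to an order ideal $D\subseteq\Z_{\geq0}^d$ without increasing $|D+D|$, observe that a full-dimensional order ideal satisfies $|D+D|\geq(2^d-o(1))|D|$ by the hockey-stick identity, and translate the conclusion back to $A$; there the delicate point is controlling the effect of the compressions on the sumset.
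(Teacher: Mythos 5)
The paper does not prove this statement: it is Theorem~A, cited as a classical result of Freiman (with later treatments by Bilu, Nathanson, and Tao--Vu). What the paper actually proves is a sharp quantitative version of the $d=2$ case for possibly distinct $A,B$ (Theorem~\ref{final-result}), and it does so by a route unrelated to yours: a Stanchescu-type deletion lemma (Lemma~\ref{lem-stan-reduction}) that either bounds $h_1(A,B)$ outright or strips a small nearly-collinear piece from each of $A$ and $B$ while removing twice as many points from $A+B$, combined with a compression-based counting bound (Theorem~\ref{THEbounds} via Lemma~\ref{red-bound-calculus-formulation}), organised as a simultaneous induction on the parameter $s$, not on the dimension $d$.

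As a sketch of the general-$d$ theorem your plan has the right flavour but a genuine gap that you yourself flag. The slicing set-up is fine, and your inheritance claim is correct: for a $(d-2)$-subspace $H'\subset\ker\xi$ and $n$ a normal to $\ker\xi$, one has $|\phi_{H'}(\overline{A})|=|\phi_{H'+\R n}(A)|\geq s$. But the ``engine'' you invoke --- a discrete Brunn--Minkowski bound with controlled error for sets spread in every direction, to be proved ``in tandem'' by the same induction --- is not a lemma you get to assume; it is essentially the theorem. The cases you defer (flat fibre: re-slice and recurse; unbalanced fibres: re-choose $\xi$) are not corner cases but the whole problem: if most of $A$ lies on one fibre the sum $\sum_i|A_i+A_i|$ gives almost nothing, and the proposal supplies neither a termination argument for the nested re-slicing nor any bookkeeping of how the errors accumulate so that $k(c,d)$ and $s(c,d)$ can actually be named. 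As written this is a proof strategy, not a proof. Your alternative via Green--Tao compressions is the more viable path, but note a subtlety even there: compressing to an order ideal does not preserve $h_{d-1}(A,A)$ in all directions --- equation (\ref{compression-doesn't-affect-param}) records invariance of $|\phi_{X_i}(A)|$ only along the chosen coordinate flag, and the proof of Theorem~\ref{final-result} has to work around exactly this by first arranging $s$ collinear points on a coordinate line so that $h_1$ is preserved under compression.
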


From the pigeonhole principle, one then easily infers there must
exist a hyperplane $H$ such that $|H\cap A|\geq \frac{1}{s-1}|A|$,
thus containing a significant fraction of the elements of $A$. In
fact, this corollary is sometimes given as the statement of the
Freiman $2^d$--Theorem itself, in part because it can be shown to
easily imply the version given above, illustrating the close dual
relationship between being covered by a small number of hyperplanes
and having a large intersection with a hyperplane.

The Freiman $2^d$--Theorem was one of the main tools used in the
original proof of Freiman's Theorem \cite{bilu} \cite{F2} \cite{F1}
(a result which shows that any subset $A\subseteq \Z$ with
$|A+A|\leq C|A|$ must be a large subset of a multidimensional
progression), which has become one of the foundational centerpieces
in inverse additive theory. However, like Freiman's Theorem itself,
it suffers from lacking even asymptotically correct constants.
Remedying such a drawback would greatly magnify the applicability of
these results, and in the case of Freiman's Theorem, much effort has
been so invested culminating in the achievement of values that are
now almost asymptotically correct \cite{Ch}.

With the Freiman $2^d$--Theorem, there has been less notable success
in improving the constants. When $d=2$ (so that a hyperplane is just
a line), independent proofs of the result were found by Fishburn
\cite{Fi} and by Stanchescu \cite{S2}, with the latter method
yielding an optimal value for $s(c,d)$ (specifically, $s=s(c,2)$ is
the ceiling of the smaller root defined by
$c|A|=4|A|+1-2(s+\frac{|A|}{s})$), though the value for $k(c,d)$ was
still not asymptotically accurate (the constant obtained was cubic
in $s$ rather than quadratic).

The main result of this paper is the following, which extends the
$2$-dimensional case of the Freiman $2^d$--Theorem to distinct sets
while at the same time giving exact values for the constants (when
$||A|-|B||\leq s$).

\begin{thm}\label{final-result} Let $s\geq 2$ be an integer, and let
$A,\,B\subseteq \mathbb{R}^2$ be finite subsets.

{\rm (i)} If $||A|-|B||\leq s$, $|A|+|B|\geq 4s^2-6s+3$, and
\be\label{sim-assumption}|A+B|<(2-\frac 1 s )(|A|+|B|)-2s+1,\ee then
$h_1(A,B)< s$.

{\rm (ii)} If $|A|\geq |B|+s$, $|B|\geq
2s^2-\frac{7}{2}s+\frac{3}{2}$, and
\be\label{speel}|A+B|<|A|+(3-\frac 2 s )|B|-s,\ee then $h_1(A,B)<s$.
\end{thm}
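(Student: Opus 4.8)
The plan is to prove the contrapositive of each part: assuming $h_1(A,B)\geq s$, I would derive the stated lower bound on $|A+B|$. The engine is the classical fiber (slice) decomposition. Fix a direction (for part (i), one attaining $h_1(A,B)$, so that $\max(k,\ell)=h_1(A,B)\geq s$, where $(k,\ell):=(|\phi(A)|,|\phi(B)|)$ and $\phi:\R^2\to\R$ is the quotient map modulo that direction); order the corresponding slices of $A$ as $A_1,\dots,A_k$ with $|A_i|=\alpha_i$, $\sum_i\alpha_i=|A|$, and those of $B$ as $B_1,\dots,B_\ell$ with $|B_j|=\beta_j$. Each $A_i+B_j$ lies in a single line, so $|A_i+B_j|\geq\alpha_i+\beta_j-1$, whence the slice of $A+B$ over a given value $c$ has size at least $\max_{a_i+b_j=c}(\alpha_i+\beta_j-1)$. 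Summing this over the at least $k+\ell-1$ values of $\phi(A)+\phi(B)$ along an ``L-shaped'' family of pairs (one extreme slice of $A$ paired with every slice of $B$, then the remaining slices of $A$ paired with the opposite extreme slice of $B$) yields, for a good choice of the corner slices $A_{i_0}, B_{j_0}$,
\be
|A+B|\ \geq\ |A|+|B|-k-\ell+1+(\ell-1)\alpha_{i_0}+(k-1)\beta_{j_0},
\ee
which is already exact for a grid $\{0,\dots,s-1\}\times\{0,\dots,m-1\}$, reassuring me that it has the right shape. (Degenerate cases, where $A$ or $B$ lies in a single line, fit into the same scheme with $k$ or $\ell$ equal to $1$ and are best disposed of first.)

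The first real task is to upgrade this to the \emph{general two-dimensional lower bound} promised in the abstract, in a form robust enough that the direction can be chosen freely: one must exclude the possibility that over every direction the corner slices are tiny while the bulk of $A$ or $B$ hides in interior slices. When some slice is disproportionately large, I would apply the Freiman $3k-4$ Theorem to the one-dimensional sections $A_i+B_j$ that pile up over a common value of $\phi(A)+\phi(B)$; this forces those sections to be arithmetic progressions with a common difference, which rigidifies the configuration enough to recover the bound directly, or to reduce --- via an affine map --- to a smaller or to a genuinely one-dimensional instance. I expect this to be the main obstacle: the $3k-4$ input has to be extracted under exactly the right numerical hypotheses, and it is the bookkeeping of how interior values of $A+B$ accumulate contributions from several slice-sums that produces the error terms, hence the precise thresholds $|A|+|B|\geq 4s^2-6s+3$ and $|B|\geq 2s^2-\tfrac72 s+\tfrac32$.

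Granting the general bound, I would finish part (i) by taking a direction attaining $h_1(A,B)$, so $\max(k,\ell)\geq s$, and using $||A|-|B||\leq s$ to keep the corner terms $(\ell-1)\alpha_{i_0}$ and $(k-1)\beta_{j_0}$ balanced; bounding corner slices below by the average slice size and using the remaining freedom in the choice of direction to control $k$ and $\ell$, the bound should collapse to $(2-\tfrac1s)(|A|+|B|)-2s+1$, with the lower-order slack absorbed by $|A|+|B|\geq 4s^2-6s+3$. For part (ii), $A$ is the larger set, so I would instead use a direction in which $A$ is genuinely spread out (so $k$ is not too small), pair the single largest slice of $B$ with \emph{all} slices of $A$ to harvest the coefficient $3-\tfrac2s$ on $|B|$, and handle the remaining values of $A+B$ by the same L-path device, using $|B|\geq 2s^2-\tfrac72 s+\tfrac32$ together with $|A|\geq|B|+s$ to ensure the pairings go through without interior overlaps eroding the gain. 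Both parts would be closed by induction on $|A|+|B|$: deleting an extreme slice either preserves $h_1\geq s$, in which case I apply the inductive hypothesis to the remainder and add back the deleted slice's contribution along fresh values of $A+B$, or it drops $h_1$ below $s$, in which case the forced structural rigidity of $A$ or $B$ makes the inequality verifiable by hand.
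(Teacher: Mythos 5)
The broad ingredients you list — slice decomposition giving an L-path bound, Freiman $3k$--$4$ for rigidity, and a deletion/induction — are all present in the paper, but the architecture you propose is different in ways that open genuine gaps, and two of them I do not see how to close.

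First, the induction scheme. You propose to induct on $|A|+|B|$ by deleting an extreme slice, recovering its contribution ``along fresh values of $A+B$.'' Deleting a slice $A_1$ of $A$ drops $|A|$ by $|A_1|$, while the fresh values you recover number only $|A_1+B_1|\geq |A_1|+|B_1|-1$ for one extreme $B$-slice. To close a bound of the form $(2-\tfrac1s)(|A|+|B|)-2s+1$ you would need the sumset to shrink by at least $(2-\tfrac1s)|A_1|$, i.e.\ $|B_1|\geq (1-\tfrac1s)|A_1|+1$, which is simply false in general: after compression $A_1$ is the largest $A$-slice and it has no reason to be comparable to $|B_1|$. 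The paper circumvents this precisely by \emph{not} deleting a full slice of one set. Its Lemma~\ref{lem-stan-reduction} deletes parallel collinear pieces $A_0\subseteq A$ and $B_0\subseteq B$ from the boundary of the convex hulls, with $|A'+B'|\leq |A+B|-2(|A_0|+|B_0|)$; the factor $2$ per deleted point (strictly exceeding $2-\tfrac1s$) is exactly what makes the recursion close, and obtaining it requires the convex-hull geometry (Ruzsa's unique-expression-element argument, Claims~\ref{claim:lat}--\ref{claim:edge}), not slice-compression. Moreover the paper's primary induction is on $s$, running between two mutually recursive statements --- Theorem~\ref{final-result} ($h_1(A,B)<s$) and Theorem~\ref{david-part} (no $s$ collinear points) --- so that the case $s-1$ of the first feeds the case $s$ of the second and vice versa; your proposal has no analogue of this twin-statement structure, and without it the ``drops $h_1$ below $s$'' branch of your case split is not actually verifiable by hand.

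Second, the use of Freiman $3k$--$4$. You want to apply it to the fiber sums $A_i+B_j$ that land on a common value of $\phi(A)+\phi(B)$, to force them to be aligned progressions. But once you have compressed in the sense of Section~2, each fiber of $\C_X(A)+\C_X(B)$ already carries a single maximal contribution, and the rigidity you would gain does not feed the bound. The paper instead invokes the Lev--Smeliansky form (Theorem~\ref{thm-Freiman-3k-3-stanchescu_version}) on the one-dimensional projections of the \emph{boundary pieces} $A_0$ and $B_0$, inside the deletion lemma, where it yields the extra elements needed to reach the $2(|A_0|+|B_0|)$ loss when $A_0$ is not an arithmetic progression. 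This is a different, and load-bearing, use of $3k$--$4$ that your sketch does not reach. Finally, the ``general two-dimensional lower bound'' you allude to is the paper's Theorem~\ref{THEbounds}, but it is stated with the \emph{average} slice sizes $|A|/m$, $|B|/n$ rather than corner slice sizes; replacing corners by averages (via Lemma~\ref{red-bound-calculus-formulation}) is what makes the bound uniform in the choice of direction, and is a small but essential refinement your L-path bound does not supply.
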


The following example shows that, for $s\geq 3$, the constant in (i)
is best possible: let $T$ be a right isosceles triangle in the
integer lattice whose equal length sides each cover $x=2s-2$ lattice
points; then $|T|=(s-1)(2s-1)$ and
$|2T|=2(s-1)(4s-5)<4|T|+1-2s-2\frac{|T|}{s}$, but $T$ is covered by
no fewer than $2s-2>s-1$ parallel lines. The same example shows that,
even when $|A|+|B|<4s^2-6s+3$ and $h_1(A,B)\geq s$, the lower bound
on $|A+B|$ implied by Theorem \ref{final-result} (i) is quite
accurate.  Indeed, when $x\geq s$, we have $|T|=\frac{x(x+1)}{2}\geq
\frac{s(s+1)}{2}$, $h_1(T,T)\geq s$ and
$$|2T|=x(2x-1)=4|T|+\frac{3}{2}-3\sqrt{\frac{1}{4}+2|T|}.$$ On the
other hand,  for $|A|+|B|<4s^2-6s+3$ and $h_1(A,B)\geq s$, one can
always choose $s_0<s$ so that the hypothesis of Theorem
\ref{final-result} hold. Let
$t_0=\frac{1}{2}\sqrt{\frac{1}{4}+|A|+|B|}-\frac{1}{4}$, and let
$s_0=\lceil t_0\rceil=t_0+z$, with $0\leq z<1$. Note that $|A|+|B|=
4(t_0+1)^2-6(t_0+1)+2>4s_0^2-6s_0+2$. When $|A|+|B|\geq 14$, by
applying Theorem \ref{final-result} with $s_0$,   the resulting
bound, as a function of $z$, is minimized for $z=0$. Consequently,
we obtain the estimate
$$|A+B|\geq 2|A|+2|B|+\frac{1}{2}-3\sqrt{\frac{1}{4}+|A|+|B|}$$ when $14\leq |A|+|B|\leq
4s^2+2s$, $h_1(A,B)\geq s$, and either $||A|-|B||\leq s_0$ or else
$||A|-|B||\leq \lceil\frac{s}{2}\rceil$ and $s(s+1)\leq |A|+|B|$.
This shows that the resulting bound for $|A+B|$ using $s_0$ is
surprisingly accurate for $|A|+|B|\geq s(s+1)$. However, once
$|A|+|B|<s(s+1)$, the lower bound for $|A+B|$ assuming $h_1(A,B)\geq
s$ should begin to become much larger.

The proof of Theorem \ref{final-result} will be given in Section 3,
along with the proof of the dual formulation bounding $|A+B|$ when
$A$ and $B$ are assumed to contain no $s$ collinear points.
Concerning the case $s=2$, a result of  Ruzsa \cite{R}, generalizing
to distinct sets yet another result of Freiman \cite[Eq. 1.14.1]{F1}
\cite{taobook}, shows that if $A,\,B\subseteq \R^d$ with $|A|\geq
|B|$ and $A+B$ $d$-dimensional, then $|A+B|\geq
|A|+d|B|-\frac{d(d+1)}{2}$. However, as the Freiman $2^d$--Theorem
indicates, the cardinality of $A$ and $B$ modulo appropriate
subspaces also plays an important role contributing to the
cardinality of $A+B$. Section 2 is devoted to proving Theorem
\ref{THEbounds} below, which gives a general lower bound for $|A+B|$
based upon $|\phi_H(A)|$ and $|\phi_H(B)|$, with $H=\R x_1$ an
arbitrary one-dimensional subspace. It will be a key ingredient in
the proof of Theorem \ref{final-result}. We remark that the
symmetric case (when $A=B$) was first proved by Freiman \cite[Eq.
1.15.4]{F1}.

\begin{thm}\label{THEbounds} Let $A,\,B\subseteq \mathbb{R}^2$ be finite, nonempty
subsets, let $\ell=\R x_1$ be a line, let $m$ be the number of lines
parallel to $\ell$ which intersect $A$, and let $n$ be the number of
lines parallel to $\ell$ that intersect $B$. Then
\be\label{la-lb-bound}|A+B|\geq
(\frac{|A|}{m}+\frac{|B|}{n}-1)(m+n-1).\ee Furthermore, the
following bounds are implied by (\ref{la-lb-bound}).

{\rm (i)} If $m\geq n$ and $|A|\leq |B|+m$, then $$|A+B|\geq
(2-\frac 1 m)(|A|+|B|)-2m+1.$$

{\rm (ii)} If $|A|\geq |B|+m$, then $$|A+B|\geq |A|+(3-\frac 2
m)|B|-m.$$

{\rm (iii)} If $1<m<|A|$, let $l$ be an integer such that
$\frac{l(l-1)}{m(m-1)}\leq \frac{|B|}{|A|-m}\leq
\frac{l(l+1)}{m(m-1)}$, and if $m=1$, let $l=1$. Then
$$|A+B|\geq
|A|+|B|+\frac{l-1}{m}|A|+\frac{m-1}{l}|B|-(m+l-1).$$

{\rm (iv)} In general, $$|A+B|\geq
|A|+|B|+2\sqrt{(m-1)(\frac{|A|}{m}-1)|B|}-(\frac{|A|}{m}+m)+1.$$
\end{thm}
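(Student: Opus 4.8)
The plan is to prove the fundamental inequality \eqref{la-lb-bound} first, and then derive (i)--(iv) from it by elementary (if somewhat delicate) optimization. For \eqref{la-lb-bound}, I would set up coordinates so that $\ell = \R x_1$ is horizontal and study the problem fiber by fiber. Let the nonempty horizontal fibers of $A$ lie on lines $y = \alpha_1 < \alpha_2 < \cdots < \alpha_m$ and those of $B$ on $y = \beta_1 < \cdots < \beta_n$, and for each value $\gamma$ write $A_\gamma, B_\gamma$ for the corresponding fibers, viewed as subsets of $\R$ (one-dimensional slices). The sum $A+B$ lives on the $m+n-1$ horizontal lines $y = \alpha_i + \beta_j$; grouping by this value, the fiber of $A+B$ over a given $c$ contains $A_{\alpha_i} + B_{\beta_j}$ for every $(i,j)$ with $\alpha_i + \beta_j = c$. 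The first key step is the one-dimensional fact that $|A_{\alpha_i} + B_{\beta_j}| \ge |A_{\alpha_i}| + |B_{\beta_j}| - 1$, and more importantly a ``Brunn--Minkowski along a staircase'' argument: if one walks along the antidiagonals $\alpha_i + \beta_j = c$ in order, the sizes of the fibers of $A+B$ dominate a telescoping sum of the fiber sizes of $A$ and of $B$. Concretely, I expect to show $\sum_{c}|(A+B)_c| \ge$ (something like) a rearrangement/convexity bound that, after summing, yields $|A+B| \ge (\frac{|A|}{m}+\frac{|B|}{n}-1)(m+n-1)$; the averaging by $m$ and $n$ enters because $\sum_i |A_{\alpha_i}| = |A|$ and $\sum_j |B_{\beta_j}| = |B|$, so the ``average fiber'' of $A$ has size $|A|/m$ and of $B$ has size $|B|/n$, and the product structure $(m+n-1)$ is exactly the number of antidiagonals. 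I would make this rigorous by a compression/induction argument on $m+n$: project out one extreme line (say the top fiber of $A$), apply induction to what remains, and account for the new antidiagonal it creates, checking that the bookkeeping gives back the claimed bound; alternatively, invoke the two-dimensional Brunn--Minkowski-type estimate directly if the fibers can be taken to be intervals after a compression that does not increase $|A+B|$.

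Once \eqref{la-lb-bound} is in hand, parts (i)--(iv) are consequences of analyzing the function $f(m,n) = (\frac{|A|}{m}+\frac{|B|}{n}-1)(m+n-1)$ as $n$ ranges over the allowed integers $1 \le n \le m$ (recall $m \ge n$ since we may assume $A$ has at least as many fibers as $B$, or handle the cases by symmetry of the statement). For (i), under $|A| \le |B|+m$ one checks that $f$ is minimized at $n = m$, where it collapses to $(\frac{|A|+|B|}{m}-1)(2m-1) = (2-\frac1m)(|A|+|B|) - 2m + 1$ after expansion. For (ii), under $|A| \ge |B|+m$, the minimizing value of $n$ is $n=1$ (the derivative in $n$ is positive throughout the range when $|A|$ is large relative to $|B|$), giving $f(m,1) = (\frac{|A|}{m}+|B|-1)\cdot m = |A| + m|B| - m$; but one must be slightly more careful: the stated bound is $|A|+(3-\frac2m)|B|-m$, which is what one gets by instead evaluating the convex lower envelope at a non-integer optimum and then rounding, so the real work is to show $f(m,n) \ge |A|+(3-\frac2m)|B|-m$ for \emph{all} valid $n$, which amounts to verifying a quadratic inequality in $n$. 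Part (iii) is the ``generic'' case: treating $n$ as a continuous variable, $\partial f/\partial n = 0$ gives $n = \sqrt{\frac{|B| m(m-1)}{|A|-m}}$ roughly, and the integer $l$ in the statement is the round of that optimum; substituting $n=l$ back in yields the stated expression, and one must check that $l$ as defined genuinely bounds where the true integer minimum lies (this is the role of the two-sided inequality $\frac{l(l-1)}{m(m-1)} \le \frac{|B|}{|A|-m} \le \frac{l(l+1)}{m(m-1)}$). Part (iv) is the crude AM--GM bound: drop the integrality of $n$ entirely, minimize $f$ over real $n > 0$, and simplify via $2\sqrt{uv} \le u + v$ to get the clean $|A|+|B|+2\sqrt{(m-1)(\frac{|A|}{m}-1)|B|} - (\frac{|A|}{m}+m)+1$.

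The main obstacle will be the proof of \eqref{la-lb-bound} itself --- specifically, getting the \emph{averaged} quantities $|A|/m$ and $|B|/n$ rather than just a min- or max-fiber bound. A naive fiber-by-fiber application of one-dimensional Cauchy--Davenport gives only $|A+B| \ge |A| + |B| - \min(m,n)$ type estimates, which is far weaker. The averaging must come from a genuinely two-dimensional convexity input: the sizes of successive antidiagonal fibers of $A+B$, read off in the right order, must dominate \emph{partial sums} of a rearrangement of the $|A_{\alpha_i}|$ together with a rearrangement of the $|B_{\beta_j}|$, and then a convexity/rearrangement inequality (the same phenomenon underlying Brunn--Minkowski) converts partial-sum domination into the product bound $(\text{avg}_A + \text{avg}_B - 1)(m+n-1)$. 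I would spend most of the effort setting up the correct ordering of antidiagonals and the correct pairing of fibers so that this telescoping is valid, likely via a compression argument reducing each fiber to an interval of the same length (checking that such compression does not increase $|A+B|$), after which the whole thing reduces to the continuous planar Brunn--Minkowski inequality applied to a union of horizontal strips. Parts (i)--(iii), by contrast, I expect to be routine but calculation-heavy case analysis on the quadratic $f(m,n)$, with the only subtlety being the correct handling of the rounding that produces the ``$3 - \frac2m$'' and the integer $l$.
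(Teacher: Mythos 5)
Your outline matches the paper's route almost exactly: compress so that each fiber of $A$ and $B$ is a left-aligned interval (noting compression cannot increase $|A+B|$ by one-dimensional Cauchy--Davenport, equations \eqref{well-align-lowerbound}--\eqref{well-align-lowerbound-fullycompressed}), reduce \eqref{la-lb-bound} to a purely combinatorial inequality on the fiber-size sequences $a_1\ge\cdots\ge a_m$ and $b_1\ge\cdots\ge b_n$, prove that inequality by induction on $m+n$ by peeling off one extreme fiber, and then derive (i)--(iv) by optimizing the resulting expression in $n$. Two details you leave vague are worth nailing down if you carry this out: the quantity to control in the induction is the \emph{maximum} of $a_i+b_j$ along each antidiagonal (this is precisely what compression produces, as in \eqref{calc-form-wellaligned}; the ``partial-sum rearrangement'' framing you mention is a detour), and when you peel off a fiber you must choose whether to remove it from $A$ or from $B$ --- the induction closes only after the WLOG normalization $\bar a-\bar{a'}\le\bar b-\bar{b'}$ as in Lemma \ref{red-bound-calculus-formulation}. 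Also, for (ii) the continuous minimizer of $f(m,n)$ is interior (near $n\approx m-\tfrac12$ when $|A|=|B|+m$), not at $n=1$, so your initial heuristic is off; the clean derivation is to first use $|A|\ge|B|+m$ to replace one occurrence of $|A|$ by $|B|+m$ in the expanded form of \eqref{la-lb-bound}, after which the discrete minimum over $n$ falls at the boundary $n=m$ (equivalently $n=m-1$, same value) and expands to $|A|+(3-\tfrac2m)|B|-m$.
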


Note $l=\lfloor
\sqrt{\frac{1}{4}+\frac{(m-1)|B|}{|A|/m-1}}+\frac{1}{2}\rfloor$
satisfies the hypotheses of Theorem \ref{THEbounds}(iii) for
$m<|A|$. We remark that Theorem \ref{THEbounds}(iv), along with the
compression techniques of Section 2, easily implies (a diagonal
compression along $x_1-x_2$ should also be used when $A$ is
contained in two lines, $y_1+\R x_1$ and $y_2+\R x_2$, each
containing $\frac{|A|+1}{2}$ points of $A$) the $2$-dimensional case
of a discrete analog of the Brunn-Minkowski Theorem given by Gardner
and Gronchi \cite[Theorem 6.6, roles of $A$ and $B$ reversed]{Ga}.
Also, (\ref{la-lb-bound}) improves the $2$-dimensional case of an
estimate of Green and Tao \cite[Theorem 2.1]{GT}, with the two
bounds equal only when $A$ is a rectangle. In Section 2.1, we give a
continuous version of Theorem \ref{THEbounds} that generalizes the
$2$-dimensional case of the Brunn-Minkowski Theorem (see e.g.
\cite{Ga}).

The lower bounds for $|A+B|$ from Theorem \ref{final-result}(ii) and Theorem \ref{THEbounds}(ii) are
estimates based on $\min\{|A|,\,|B|\}$, much like nearly all other
existing estimates for distinct sumsets; however, if $|A|$ is much
larger than $|B|$, such bounds can be weak. The bounds in Theorem \ref{THEbounds}(iii) and Theorem \ref{THEbounds}(iv) are more accurate since they take into account the relative size of $|A|$ and $|B|$. It would be desirable to have a similar refinement to Theorem \ref{final-result}, i.e., a lower bound for $|A+B|$ based off the parameter $s\leq h_1(A,B)$ \emph{and} the relative size of $|A|$ and $|B|$. One possibility would be if the bound in Theorem \ref{THEbounds}(iii) held with the globally defined parameter $s\leq h_1(A,B)$ in place of $m$, for $|A|$ and $|B|$ suitably large with respect to $s$. This is achieved by Theorem \ref{final-result}(i) for the extremal case when $|A|$ and $|B|$ are very close in size. Theorem \ref{Thm-AisBig} below accomplishes the same aim for the other extremal case, when $|A|$ is much larger than $|B|$.  Note that the coefficient of $|B|$ in the bound below is much larger
than the value of $3-\frac{2}{s}$ obtained from Theorem
\ref{final-result}(ii). Moreover, the bound on $|B|$ required to apply Theorem \ref{Thm-AisBig}(b) is much smaller than the corresponding requirement for Theorem \ref{final-result}, being linear in $s$ rather than quadratic. In fact, Theorem \ref{Thm-AisBig}(a) shows that, by only increasing slightly the requirement of $|A|$ to be much larger than $|B|$---from $|A|\geq \frac{1}{2}s(s-1)|B|+s$ to $|A|>
\frac{1}{8}(2s-1)^2|B|-\frac{1}{4}(2s-1)+\frac{(s-1)^2}{2(|B|-2)}$---one can eliminate all need for $|A|$ and $|B|$ to be sufficiently large with respect to $s$.

\begin{thm}\label{Thm-AisBig} Let $s$ be a positive integer, and
let $A,\,B\subseteq \R^2$ be finite, nonempty subsets with
$h_1(A,B)\geq s$ and $|A|\geq \frac{1}{2}s(s-1)|B|+s$. If either

(a) $|A|>
\frac{1}{8}(2s-1)^2|B|-\frac{1}{4}(2s-1)+\frac{(s-1)^2}{2(|B|-2)}$,
or

(b) $|B|\geq \frac{2s+4}{3}$, then \be\label{Goool-Aisbig}|A+B|\geq
|A|+s(|B|-1).\ee
\end{thm}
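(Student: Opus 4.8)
The plan is to exploit the fact that when $|A|$ is enormously larger than $|B|$, the set $A$ must be very densely concentrated on a single line (relative to $B$), and to run a one-dimensional-type argument on that line using the bounds already available from Theorem \ref{THEbounds}. Let $\ell=\R x_1$ be a line realizing $h_1(A,B)$ as closely as possible for $A$: let $m$ be the number of lines parallel to $\ell$ meeting $A$ and $n$ the number meeting $B$. Since $h_1(A,B)\geq s$, we have $\max\{m,n\}\geq s$ for every choice of direction, but we want to choose the direction so that $m$ is small; I would first argue that we may assume $m\leq s$ — indeed if every direction forced $m>s$ then, combined with $|B|$ small, one would contradict $h_1(A,B)\geq s$ being witnessed, or else $|A+B|$ is already large by Theorem \ref{THEbounds}(iv) applied with that large $m$ (the quantity $2\sqrt{(m-1)(|A|/m-1)|B|}$ grows once $m$ is moderately large while $|A|$ stays huge, so crossing the target $s(|B|-1)$ is cheap). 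With $m\leq s$ fixed and $h_1(A,B)\geq s$, the dual role forces $n\geq s$, i.e. $B$ must itself meet at least $s$ lines parallel to $\ell$.

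Now apply Theorem \ref{THEbounds}\,(iii) in the direction $\ell$. We have $1<m\leq s<|A|$ (the case $m=1$ is handled separately and is easy: $A$ on one line, $B$ meeting $\geq s$ parallels, gives $|A+B|\geq |A|+s(|B|-1)$ directly by summing the one line against the $s$ lines of $B$, using the $1$-dimensional Cauchy–Davenport/Freiman estimate line by line). Since $|A|\geq \frac12 s(s-1)|B|+s\geq \frac12 m(m-1)|B|+m$, the ratio $\frac{|B|}{|A|-m}\leq \frac{2}{m(m-1)}$, which forces the integer $l$ of Theorem \ref{THEbounds}\,(iii) to satisfy $l\geq s\geq m$ — this is the crucial inequality and it is exactly why the hypothesis on $|A|$ is calibrated the way it is. Plugging $l\geq s$ and $m\leq s$ into
$$|A+B|\geq |A|+|B|+\frac{l-1}{m}|A|+\frac{m-1}{l}|B|-(m+l-1)$$
and discarding the (nonnegative) term $\frac{m-1}{l}|B|$, we get $|A+B|\geq |A|+\frac{l-1}{m}|A|+|B|-l+1-m$; since $\frac{l-1}{m}|A|\geq \frac{l-1}{m}\cdot\frac{l(l-1)}{m(m-1)}(|A|-m)\cdot\frac{m}{l}$ is large, a short computation reduces everything to checking $\frac{l-1}{m}|A|+|B| \geq s|B|+l+m-1$, which, using $l\geq s$ and the lower bound on $|A|$, is where hypotheses (a) and (b) enter: (b) gives enough slack via $|B|\geq\frac{2s+4}{3}$ to absorb the additive error terms uniformly, while (a) strengthens the lower bound on $|A|$ by the precise amount $\frac14(2s-1)-\frac{(s-1)^2}{2(|B|-2)}$ needed so that the term $\frac{l-1}{m}|A|$ alone already covers $s|B|+l+m-1$ without any lower bound on $|B|$ beyond $|B|\geq 3$ (the case $|B|\leq 2$ being trivial or vacuous).

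The main obstacle I anticipate is the first step — justifying that the optimal direction gives $m\leq s$ and that the remaining cases with $m$ large are dispatched by Theorem \ref{THEbounds}\,(iv). This requires care because Theorem \ref{THEbounds} and the parameter $h_1(A,B)$ refer to \emph{different} directions in general (there may be no single line $\ell$ for which both $A$ and $B$ simultaneously achieve $\leq s$ parallels unless $h_1<s$, which is precisely the hypothesis we do \emph{not} have). The resolution is that the failure of $h_1(A,B)<s$ means \emph{every} direction has $\max\{m_{\text{dir}}, n_{\text{dir}}\}\geq s$; since $|B|$ is comparatively tiny, $n_{\text{dir}}\leq |B|$ is bounded, so either some direction has $m_{\text{dir}}$ bounded by $s$ while $n_{\text{dir}}\geq s$ (the generic case, handled above), or in \emph{every} direction $m_{\text{dir}}$ is already $\geq s$ — but then picking any direction and applying Theorem \ref{THEbounds}\,(iv) with that $m\geq s$ and the huge $|A|$ immediately exceeds $|A|+s(|B|-1)$. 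Assembling these cases and verifying the threshold computations in (a) versus (b) carefully is the bulk of the work; the arithmetic is routine once the structure above is in place.
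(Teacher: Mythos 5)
Your proposal correctly identifies the easy part of the argument---applying the bounds of Theorem~\ref{THEbounds} in a direction where $m=|\phi_Z(A)|\le s$---and that matches the paper's opening moves. But there are two genuine errors that make the rest fail.

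First, the claim that $\frac{|B|}{|A|-m}\le \frac{2}{m(m-1)}$ ``forces the integer $l$ of Theorem~\ref{THEbounds}(iii) to satisfy $l\ge s\ge m$'' is backwards. The defining condition $\frac{l(l-1)}{m(m-1)}\le \frac{|B|}{|A|-m}$ then gives $l(l-1)\le 2$, so $l\le 2$, not $l\ge s$. (Intuitively, $l$ is the adversary's optimal number of lines for $B$ in the compressed picture, and when $|A|$ dwarfs $|B|$ the adversary wants $B$ on few lines.) It happens that plugging $l\in\{1,2\}$, $m\le s$, and $|A|\ge\frac12 s(s-1)|B|+s$ into the bound of Theorem~\ref{THEbounds}(iii) does give $|A+B|\ge|A|+s(|B|-1)$ after a short computation, so the sub-case $m\le s$ is salvageable; but the reasoning as written is wrong, and the subsequent ``discarding the nonnegative term'' step built on $l\ge s$ has to be redone.

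Second, and more seriously, the dismissal of the case where $m>s$ in every direction via ``Theorem~\ref{THEbounds}(iv) immediately exceeds $|A|+s(|B|-1)$'' is false, and that case is in fact the entire substance of the theorem. The paper's example with $s=34$, $|B|=6$, $|A|=3400=\frac12 s(s-1)|B|+s$, $h_1(A,B)=80$ has $m>s$ in every direction, and one checks that Theorem~\ref{THEbounds}(iv) gives roughly $3565$ while $|A|+s(|B|-1)=3570$; indeed $|A+B|=3567<3570$ there, so the conclusion genuinely fails unless (a) or (b) is invoked. The paper's actual proof, after the easy reduction to $m>s$ and $n\ge 2$ in every direction, proceeds by assuming the conclusion false, reading off from (\ref{la-lb-bound}) a quadratic in $m$ whose discriminant must be nonnegative, and extracting from this a sequence of constraints (Claims~1--5: $n\ge 3,4,5$ for various $|B|$, a strong lower bound on $m$ in terms of $s|B|$, and that $A$ contains $s$ collinear points), followed by a separate analysis for each of $|B|\in\{5,6,7,8\}$; hypotheses (a) and (b) enter precisely at the discriminant computations with $n'=2$ and at $|B|=6$, not as generic ``slack.'' None of this machinery is present in your sketch, so the proposal does not constitute a proof.
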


We remark that the bound $|A|\geq \frac{1}{2}s(s-1)|B|+s$ is
not in general sufficient to guarantee $|A+B|\geq
|A|+s(|B|-1)$, and thus the slight increase in the requirement for $|A|$ given by (a) is necessary. For instance, let $s=34$, and let $A'$
and $B$ be geometrically similar right isosceles triangles whose
equal length sides each cover $82$ and $3$ lattice points,
respectively. Suppose $A'$ lies in the positive upper plane with one
its equal length sides along the horizontal axis. Let $A$ be
obtained from $A'$ by deleting the $3$ points in $A'$ farthest away
from the horizontal axis. Then $|B|=6$,
$|A|=3400=\frac{1}{2}s(s-1)|B|+s$, $h_1(A,B)=80>34$, and
$|A+B|=3567<3570=|A|+s(|B|-1)$. As a second example, let
$A=[0,a-1]\times [0,s+1]$ and $B=[0,b-1]\times \{0,1\}$ be two
rectangles in the integer lattice. We have $|A|=a(s+2)$, $|B|=2b$
and $|A+B|=(a+b-1)(s+3)=|A|+s(|B|-1)+a-b(s-3)-3$. By taking
$b=(s+3)/6$ and  $a=(s(s-1)b+s+1)/(s+2)=(s^2+3)/6$   (with $s\equiv
3 \pmod 6$), we have $|A|=\frac{1}{2}s(s-1)|B|+s+1$, $|B|=(s+3)/3$
and $|A+B|<|A|+s(|B|-1)$. Furthermore, $h_1(A,B)\geq
h_1(A,A)\geq \min\{s+2,\,(s^2+3)/6\}\geq s$ for $s\ge 9$.

We conclude the introduction with two special cases of Freiman's
Theorem for which exact constants are known. The first is folklore
\cite{Natbook} \cite{taobook}, while the second is a generalization
by Lev and Smeliansky \cite{LS} of the Freiman $(3k-4)$--Theorem
\cite[Theorem 1.9]{F1} \cite{Natbook} \cite{taobook}.

\begin{theirtheorem}\label{CDT-for-Z} If $A$ and $B$ are finite and
nonempty subsets of a torsion-free abelian group, then
\be\label{zvirtue1}|A+B|\geq |A|+|B|-1,\ee with equality possible
only when $A$ and $B$ are arithmetic progressions with common
difference or when $\min\{|A|,\,|B|\}=1$.
\end{theirtheorem}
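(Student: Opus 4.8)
The plan is to reduce to a totally ordered abelian group and then combine a one-line ``staircase'' argument with a short induction for the equality case. Since $A\cup B$ is finite, everything takes place inside the subgroup $\Gamma$ it generates; $\Gamma$ is finitely generated and torsion-free, hence $\Gamma\cong\Z^n$ for some $n$. Equip $\Gamma$ with the lexicographic order, so that $(\Gamma,+,<)$ becomes a \emph{totally ordered} abelian group: $x<y$ implies $x+z<y+z$, and every finite subset has a least and a greatest element. (One could instead project $\Gamma\to\Z$ by a linear functional injective on the finite set $A\cup B\cup(A+B)$, but keeping the order on $\Gamma$ lets the entire argument, including the equality characterization, run intrinsically, with no need to lift structure back from $\Z$.) It thus suffices to prove the statement in an arbitrary totally ordered abelian group.

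Write $A=\{a_1<\cdots<a_k\}$ and $B=\{b_1<\cdots<b_l\}$, $k=|A|$, $l=|B|$. Then
$$a_1+b_1<a_1+b_2<\cdots<a_1+b_l<a_2+b_l<\cdots<a_k+b_l$$
is a chain of $k+l-1$ distinct elements of $A+B$, giving (\ref{zvirtue1}). For equality we induct on $|A|+|B|$. If $\min\{|A|,|B|\}=1$, equality always holds and nothing more is asserted, so assume $k,l\ge 2$ and $|A+B|=k+l-1$. Put $A'=A\setminus\{a_k\}$. Since $a_k+b_l$ is the unique maximum of $A+B$ and lies outside $A'+B$, we get $|A'+B|\le |A+B|-1=|A'|+|B|-1$, hence equality; by induction $A'$ and $B$ are arithmetic progressions with a common difference $d$, and since $l\ge 2$ this $d$ must be the common difference of $B$. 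The symmetric argument with $A''=A\setminus\{a_1\}$ (using the unique minimum $a_1+b_1$) shows $A''$ and $B$ are progressions with the same difference $d$. If $k\ge 3$, then $A'$ and $A''$ are overlapping $d$-progressions whose union is $A$, forcing $A$ to be a $d$-progression, and we are done. If $k=2$, then $A=\{a_1,a_2\}$ and $A+B=(a_1+B)\cup(a_2+B)$ has size $l+1$, so $(a_1+B)\cap(a_2+B)$ has size $l-1$, i.e. $|B\cap(B+t)|=l-1$ with $t=a_2-a_1$; comparing the increasing chains $b_2-t<\cdots<b_l-t$ and $b_1<\cdots<b_{l-1}$ (which must be the same set) gives $b_{i+1}-b_i=t$ for all $i$, so $B$, and hence $A=\{a_1,a_2\}$, is a $t$-progression. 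The case $l=2$ is symmetric. Conversely, if $A$ and $B$ are progressions with common difference $d$, then $A+B$ is a progression of difference $d$ and length $k+l-1$, so equality does occur; this completes the characterization.

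The only delicate points are the reduction and the degenerate cases of the induction. Running the whole argument inside the ordered group $\Gamma$, rather than pushing down to $\Z$, is precisely what keeps the reduction clean, since one never has to argue that a progression in an image lifts to a progression. Inside the induction, the collapse of the two overlapping progressions $A'$ and $A''$ to a single progression fails exactly when $k=2$ (or $l=2$), and that is the case needing the separate $|B\cap(B+t)|=|B|-1$ computation; I expect this bookkeeping, rather than any genuine difficulty, to be the main thing to get right.
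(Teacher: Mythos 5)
Your proof is correct, but note that the paper does not actually prove this statement: Theorem~B (labelled \texttt{CDT-for-Z}) is cited as folklore, with references to Nathanson and Tao--Vu, and used as a black box in Section~2 and Section~3. So there is no ``paper's proof'' to compare against; what you have written is a self-contained derivation of a background result the authors take for granted.

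That said, your argument is sound and takes the standard route: reduce to a totally ordered abelian group (your observation that working inside the finitely generated subgroup $\Gamma\cong\Z^n$ with the lexicographic order avoids any lifting of structure from an image in $\Z$ is a genuine simplification over the usual ``pick an injective linear functional'' reduction for the equality case), prove the inequality by the staircase chain, and characterize equality by deleting an extreme point and inducting. One small presentational wrinkle: you invoke the induction hypothesis on $(A',B)$ to conclude that $A'$ and $B$ are progressions with a common difference \emph{before} splitting into the cases $k\ge 3$ and $k=2$, but when $k=2$ that invocation only yields the vacuous alternative $\min\{|A'|,|B|\}=1$ and tells you nothing about $B$. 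Since you then handle $k=2$ (and symmetrically $l=2$) by the direct computation $|B\cap(B+t)|=|B|-1$, there is no actual gap, but the ``by induction\dots'' sentence would be cleaner placed after the restriction to $k\ge 3$. The $k=2$ computation itself, matching the two increasing $(l-1)$-chains $\{b_2-t,\dots,b_l-t\}$ and $\{b_1,\dots,b_{l-1}\}$ termwise, is correct, as is the converse showing that progressions with common difference attain equality.
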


\begin{theirtheorem}\label{thm-Freiman-3k-3-stanchescu_version} Let
$A,\,B\subseteq \Z$ be finite nonempty subsets with $0=\min A=\min
B$, $\max A\geq \max B$ and $\gcd(A)=1$. Let $\delta=1$ if $\max
A=\max B$, and let $\delta =0$ otherwise. If
$$|A+B|=|A|+|B|+r\leq |A|+2|B|-3-\delta,$$  then $\max A\leq
|A|+ r$.
\end{theirtheorem}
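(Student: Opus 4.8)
The plan is to normalize the data and induct on $|A|+|B|$, with Theorem \ref{CDT-for-Z} and its equality clause as the only external input. Write $A=\{0=a_0<\cdots<a_{k-1}\}$ and $B=\{0=b_0<\cdots<b_{l-1}\}$, where $k=|A|$, $l=|B|$ and $a_{k-1}\ge b_{l-1}$, and put $r=|A+B|-|A|-|B|$; then the hypothesis becomes $r\le l-3-\delta$ and the conclusion becomes $|A+B|\ge a_{k-1}+l$. Since $\gcd(A)=1$ forces $k\ge2$, the case $l=1$ is vacuous ($|A+B|=|A|\le|A|-2$ is impossible); and when $l\ge2$ we have $\min\{|A|,|B|\}\ge2$, so Theorem \ref{CDT-for-Z} gives $r\ge-1$ with equality only if $A$ and $B$ are arithmetic progressions with a common difference, which $\gcd(A)=1$ forces to be $1$, whence $a_{k-1}=k-1=k+r$. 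If $l=2$ and $r\ge0$, then $|A+B|=2|A|-|A\cap(A+b_1)|\ge|A|+2$ contradicts $|A+B|\le|A|+1-\delta$, so that case is vacuous as well. Hence from now on $l\ge3$ and $r\ge0$.

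For the inductive step put $B'=B\setminus\{b_{l-1}\}$; then $\min B'=0$, $\max B'=b_{l-2}<b_{l-1}\le a_{k-1}=\max A$ so $\delta'=0$, $\gcd(A)=1$ is unchanged, and $t:=|A+B|-|A+B'|\ge1$ since $a_{k-1}+b_{l-1}\in(A+B)\setminus(A+B')$. With $r':=|A+B'|-|A|-|B'|=r-t+1\le r$, the inductive hypothesis applies to $(A,B')$ as soon as $r'\le|B'|-3-\delta'=l-4$, i.e.\ as soon as $t\ge r-l+5$; since $r\le l-3-\delta$ this only asks for $t\ge 2-\delta$, after which the induction returns $\max A\le|A|+r'\le|A|+r$ and the step is done. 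When $\delta=1$, or whenever $r\le l-4$, we have $t\ge1\ge r-l+5$ automatically; the sole remaining case is $\delta=0$ and $r=l-3$ (the extremal situation $|A+B|=|A|+2|B|-3$), where we need $t\ge2$. Here a short computation shows deleting $b_{l-1}$ also destroys $a_{k-2}+b_{l-1}$ whenever $A$'s top gap $a_{k-1}-a_{k-2}$ is smaller than $B$'s top gap $b_{l-1}-b_{l-2}$; and applying the reflection $A\mapsto a_{k-1}-A$, $B\mapsto b_{l-1}-B$ (which preserves $\min$, $\max A\ge\max B$, $\gcd(A)=1$, $\delta$ and $|A+B|$) interchanges top gaps with bottom gaps, so the mirrored move handles the case $a_1<b_1$ as well. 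Only the residual configuration with $a_1\ge b_1$ \emph{and} $a_{k-1}-a_{k-2}\ge b_{l-1}-b_{l-2}$ escapes, and there I would prove $|A+B|\ge a_{k-1}+l$ directly --- equivalently, that $A$ has at most $l-2$ holes in $[0,a_{k-1}]$ --- using that these constraints on the extreme gaps, together with $|A+B|=|A|+2|B|-3$, should leave no room for more.

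The main obstacle is exactly this residual, extremal case. The threshold constants are sharp, so the induction has no slack at all: the bookkeeping relating $r$, $r'$, $t$, $\delta$ and $\delta'$ must be exact, and the jump of $\delta$ from $1$ to $0$ under the deletion of $b_{l-1}$ has to be carried through the recursion. I expect the residual configuration either to be shown incompatible with $|A+B|=|A|+2|B|-3$ outright, or to be dispatched by a direct count of the holes of $A$ --- e.g.\ by examining the overlaps of the translates $A,\,A+b_1,\,\dots,\,A+b_{l-1}$ near the two ends of $A+B$ and charging each hole of $A$ to a distinct missing element there. Should this get unwieldy, a cleaner alternative is to target the sufficient covering condition $[0,\max A]\subseteq A+B$ from the outset: assign to each $h\in[0,\max A]\setminus A$ a representation $h=a+b_j$ with $j\ge1$ among the $l$ translates $A+b_j$, and show that any obstruction forces strictly more than $|B|-3-\delta$ surplus elements into $A+B$, contradicting the hypothesis.
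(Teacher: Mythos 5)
The paper does not prove this statement: it is quoted as a known result of Lev and Smeliansky (cited as \cite{LS}), so there is no in-paper proof to compare your argument against. Judging your proposal on its own merits, the reduction framework is sound as far as it goes: the $l\le 2$ base cases and the case $r\le l-4$ (which includes all of $\delta=1$) are handled correctly by removing $b_{l-1}$, and the top-gap observation together with the reflection $A\mapsto \max A - A$, $B\mapsto \max B - B$ does legitimately narrow the problem to the single residual configuration you isolate.

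However, that residual case is a genuine gap, not a routine cleanup. You explicitly write ``there I would prove \dots directly'' and ``I expect the residual configuration either to be shown incompatible \dots or to be dispatched by a direct count,'' without carrying out either program, and this is precisely where all the content of the theorem lives. In particular the residual case cannot be shown vacuous: take $A=\{0,2,3,4\}$ and $B=\{0,2,3\}$, so $k=4$, $l=3$, $\gcd(A)=1$, $\max A=4>3=\max B$ (hence $\delta=0$), and $A+B=\{0,2,3,4,5,6,7\}$ gives $|A+B|=7=|A|+2|B|-3$ and $r=0=l-3$; here $a_1=2\ge b_1=2$ and $a_{k-1}-a_{k-2}=1\ge 1=b_{l-1}-b_{l-2}$, yet deleting $b_{l-1}=3$ yields $A+B'=\{0,2,3,4,5,6\}$, so $t=1$ and $r'=0>|B'|-3-\delta'=-1$, and the induction does not fire. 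Thus the scheme provably bottoms out in a nontrivial case where a direct structural argument is required, and no such argument is supplied. Until the residual case is actually proved --- and it is the crux; the published proofs of the Freiman $(3k-4)$--Theorem and its Lev--Smeliansky generalization rely on genuinely different machinery (reduction modulo $\max A$, analysis of residue classes, etc.) rather than a hole count of $A$ --- the proof is incomplete.
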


\section{Lower Bound Estimates via Compression}

\subsection{Discrete Sets}

\indent \indent Let $X=(x_1,x_2,\ldots,x_d)$ be an ordered basis for
$\mathbb{R}^d$,  and let $X_i=\langle x_1,\ldots,x_i\rangle$ for
$i=0,\ldots,d$. Let $A \subseteq \mathbb{R}^d$ be a finite subset.
The linear compression of $A$ with respect to $x_i\in X$, denoted
$\mathbf{C}_i (A)=\mathbf{C}_{X,i}(A)$, is the set obtained by
compressing and shifting $A$ along each line $\R x_i+a$, where $a\in
\R^d$, until the resulting set $\C_i(A)\cap (\R x_i+a)$ is an
arithmetic progression with difference $x_i$ whose first term is
contained in the hyperplane $H=\langle
x_1,\ldots,x_{i-1},x_{i+1},\ldots,x_d\rangle$. More concretely, we
define the set $\C_i(A)$ piecewise by its intersections with the
lines $(\R x_i+a)$, $ a\in \R^d$,    by letting $\C_i(A)\cap (\R
x_i+a)$ be the subset of $\R x_i+a$ satisfying
$$\phi_{H}(\C_i(A)\cap (\R x_i+a))=\{0,x_i,2x_i,\ldots,(r-1)x_i\},$$
where $r=|A\cap (\R x_i+a)|$ and the right hand side is considered
empty if $r=0$. We let
$$\C_X(A)=\C_d(\C_{d-1}\ldots(\C_1(A)))$$
be the fully compressed subset obtained by iteratively compressing
$A$ in all $d$ dimensions. Observe that
\be\label{compression-doesn't-affect-param}|\phi_{X_i}(\C_X(A))|=|\phi_{X_i}(A)|,\ee
for $i=0,\ldots,d$.

Compression techniques in the study of sumsets have been used by
various authors, including Freiman \cite{F1}, Kleitman \cite{K},
Bollob\'as and Leader \cite{BL}, and Green and Tao \cite{GT}. The
reason for introducing the notion of compression is that it gives a
useful lower bound for the sumset of an arbitrary pair of finite
subsets $A,\,B\subseteq \R^d$. Namely, letting $H$ be as above and
letting $C_t$ denote $C\cap (\R x_i+t)$ below, we have in view of
Theorem \ref{CDT-for-Z} that
 \begin{eqnarray}\label{well-align-lowerbound}
 |A+B|&=&\sum_{t\in H} |(A+B)_t|\nonumber \\
    &\ge& \sum_{t\in H} \max\{ |A_s+B_{t-s}|:\; A_s\neq\emptyset, B_{t-s}\neq\emptyset\}\nonumber \\
    &\ge&\sum_{t\in H} \max\{ |A_s|+|B_{t-s}|-1:\; A_s\neq\emptyset, B_{t-s}\neq\emptyset\}\nonumber\\
     &=&   |\C_i(A)+\C_i (B)|,
        \end{eqnarray} and
consequently (by iterative application of
(\ref{well-align-lowerbound})),
\be\label{well-align-lowerbound-fullycompressed} |A+B|\geq
|\C_X(A)+\C_X(B)|.\ee

We now restrict our attention to the case $d=2$, which is the object
of study for this paper. Let $m=|\phi_{X_1}(A)|$,
$n=|\phi_{X_1}(B)|$, $A_i=\C_X(A)\cap (\R x_1+(i-1)x_2)$ and
$B_i=\C_X(B)\cap (\R x_1+(i-1)x_2)$. Note that $|A_1|\geq |A_2|\geq
\ldots \geq |A_{m}|$ and $|B_1|\geq |B_2|\geq \ldots \geq |B_{n}|$. If
$|A_i|=a_i$ and $|B_j|=b_j$, then \be\label{calc-form-wellaligned}
|\C_X(A)+\C_X(B)|=\Sum{l=2}{m+n} \underset{i}{\max}\{a_i+b_{l-i}\mid
1\leq i\leq m,\,1\leq l-i\leq n\}-(m+n-1).\ee Consequently, the
following lemma provides a lower bound for $|A+B|$ based upon the
number of parallel lines that cover $A$ and $B$, which will imply
(\ref{la-lb-bound}) in Theorem \ref{THEbounds}.

\begin{lem}\label{red-bound-calculus-formulation} If $a_1,\ldots ,a_m,b_1,\ldots ,b_n\in \R$,
then \be\label{2-red-bound}
\frac{1}{m+n-1}\Sum{i=2}{m+n}\underset{j}{\max} \{
a_j+b_{i-j}:\;1\le j\le m, 1\le i-j\le n\} \geq
\frac{1}{m}\Sum{i=1}{m}a_i+\frac{1}{n}\Sum{i=1}{n}b_i.\ee
\end{lem}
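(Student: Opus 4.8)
The plan is to reduce the inequality to a sum of one-dimensional diagonal comparisons. After reindexing, write the left-hand sum as $\sum_{i=2}^{m+n} c_i$ where $c_i = \max\{a_j + b_{i-j} : 1\le j\le m,\ 1\le i-j\le n\}$. The key observation is that each $c_i$ dominates any single admissible term $a_j + b_{i-j}$, so if I can exhibit, for a suitable family of index pairs, a collection of choices $(j, i-j)$ such that every $a_k$ appears in the chosen terms the "right" number of times and likewise every $b_k$, then summing the domination inequalities yields the bound after dividing by $m+n-1$. Concretely, I would first handle the trivial edge cases $m=1$ or $n=1$ (where the left side is literally $\frac{1}{n}\sum b_i + a_1$ or its mirror, with $m+n-1 = n$), and then assume $m,n\ge 2$.

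For the main case, the natural device is a double-counting / averaging argument over cyclic-type shifts. I would consider, for each shift parameter $t$, the set of diagonals $i$ for which pairing index $j$ with $t$-dependent offset is admissible, and sum $c_i \ge a_j + b_{i-j}$ over a carefully chosen pattern so that on the right-hand side each $a_k$ is counted a total of $\tfrac{m+n-1}{m}$ times (in an appropriate weighted/averaged sense) and each $b_k$ a total of $\tfrac{m+n-1}{n}$ times. Averaging over all $m$ choices of where to "anchor" the $a$-indices and all $n$ choices for the $b$-indices, the count of appearances of $a_k$ becomes exactly $\frac{1}{mn}$ times the number of valid diagonal-index pairs hitting $a_k$, which one computes to be $\frac{m+n-1}{m}$ uniformly in $k$; symmetrically for $b_k$. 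Since each used inequality is of the form $c_i \ge a_j + b_{i-j}$ and there are $m+n-1$ values of $i$, dividing the accumulated inequality by $m+n-1$ gives exactly $\frac{1}{m}\sum a_i + \frac{1}{n}\sum b_i$ on the right and at most the left-hand average of the $c_i$ on the left.

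I expect the main obstacle to be bookkeeping: constructing the shift pattern so that the appearance counts come out \emph{exactly} uniform (not merely on average, or with error terms), since the ranges $1\le j\le m$ and $1\le i-j\le n$ truncate the would-be cyclic structure near the endpoints $i=2$ and $i=m+n$. The cleanest route is probably to set up an explicit bijection-style assignment: for each $k\in\{1,\dots,m\}$ and each residue-like parameter, assign $a_k$ to the diagonal $i = k + r$ for $r$ ranging over $\{1,\dots,n\}$, giving $mn$ pairs $(k,r)$ distributed across the $m+n-1$ diagonals, and check that diagonal $i$ receives exactly $|\{(k,r): k+r = i,\ 1\le k\le m,\ 1\le r\le n\}| = \min\{i-1, m, n, m+n+1-i\}$ pairs — hence $c_i$ is invoked with that multiplicity. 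Summing $c_i$ with these multiplicities against $\sum_{k+r=i}(a_k + b_r)$ and regrouping by $k$ (resp.\ by $r$) gives $n\sum_k a_k + m\sum_r b_r$ on the right, while the left becomes $\sum_i (\text{mult}_i)\, c_i \le (\max_i \text{mult}_i)\sum_i c_i$ — but this overshoots, so instead I would \emph{not} pull out the max but rather note the multiplicities sum to $mn$ and use convexity/rearrangement, or better, use the weighted averaging so the left side is genuinely $\sum_i c_i$ with total weight $m+n-1$ on each side after normalization. Reconciling these two normalizations — $mn$ pairs versus $m+n-1$ diagonals — is precisely the delicate point, and I would resolve it by averaging the assignment over $\gcd$-many or over all cyclic rotations so that each diagonal is used with equal weight, at which point the identity $\frac{mn}{m+n-1}\cdot\frac{1}{m} = \frac{n}{m+n-1}$ makes the constants match.
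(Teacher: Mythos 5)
Your overall strategy---deriving the inequality by choosing, for each diagonal $i$, a convex combination of the admissible lower bounds $a_j+b_{i-j}$ so that every $a_k$ accumulates total weight $\tfrac{m+n-1}{m}$ and every $b_k$ accumulates total weight $\tfrac{m+n-1}{n}$---is sound in principle, and in fact such a nonnegative weighting always exists. But you never actually exhibit it, and the two fixes you sketch do not close the gap. The ``cyclic rotation'' idea fails precisely because the constraint $1\le j\le m$, $1\le i-j\le n$ truncates the diagonals near $i=2$ and $i=m+n$: wrapping indices around would pair off $a_j$ with an out-of-range $b_l$, so the domination $c_i\ge a_j+b_{i-j}$ is simply unavailable there, and averaging over shifts cannot restore it. The other fix (noting $\mathrm{mult}_i$ sums to $mn$ and ``using convexity/rearrangement'') also stalls: the multiplicities $\mathrm{mult}_i=\min\{i-1,m,n,m+n+1-i\}$ are unimodal but the $c_i$ need not be, so rearrangement or Chebyshev-type inequalities go the wrong way. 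Your own text flags this (``but this overshoots'', ``precisely the delicate point'') without resolving it, so as written the argument is incomplete.

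For the record, there is a clean explicit weighting that makes your plan work: take $\mu(j,l)$ to be the probability that a uniformly random monotone lattice path from $(1,1)$ to $(m,n)$ passes through cell $(j,l)$. Each such path meets each diagonal $j+l=i$ exactly once, giving diagonal sums equal to $1$; and a Vandermonde-type identity shows the expected number of visited cells in any fixed row is $\tfrac{m+n-1}{m}$ and in any fixed column is $\tfrac{m+n-1}{n}$. This would complete your approach, and it is a genuinely different (and rather pretty) route from the paper's. The paper instead argues by a short induction on $m+n$: deleting $a_1$, observing $u(a,b)\ge u(a',b)+a_1+b_1$ with $a'=(a_2,\ldots,a_m)$, and using the normalization $a_1=m\bar a-(m-1)\bar{a'}$ together with the symmetric reduction assumption $\bar a-\bar{a'}\le\bar b-\bar{b'}$ to absorb the error term. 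The induction is shorter and avoids any combinatorial construction; the path-weighting argument, once supplied, is more ``structural'' and makes the equality cases (arithmetic progressions) transparent. As submitted, though, your proposal is a strategy outline with the crucial construction missing, not a proof.
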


\begin{proof} The proof is by induction on $m+n$. The result clearly
holds if either $m=1$ or $n=1$. Assume that $m,\, n\ge 2$. Let $a =
(a_1,\ldots ,a_m)$ and $b =(b_1,\ldots ,b_n)$. For a vector
$x=(x_1,x_2,\ldots ,x_k)$, we denote by
$\overline{x}=\frac{1}{k}\Sum{i=1}{k}x_i$. Also, if $y=(y_1,\ldots
,y_l)$, we denote by
$$
u(x, y)=\Sum{i=2}{k+l}\underset{j}{\max} \{ x_j+y_{i-j}:\;1\le j\le
k, 1\le i-j\le l\}.
$$
Thus we want to prove
$$
u(a, b)\ge (m+n-1)(\bar{a}+\bar{b}).
$$
Let $a'=(a_2,\ldots ,a_{m})$ and $b'=(b_2,\ldots ,b_{n})$. We may
assume that $\bar{a}-\bar{a'}\le \bar{b}-\bar{b'}$. We clearly have
    $u(a,b)\ge u(a',b)+a_1+b_1.$
Thus by the induction hypothesis,
    \begin{eqnarray*}
    u(a,b)&\geq&(m+n-2)(\bar{a'}+\bar{b})+a_1+b_1\\
    &=&(m+n-2)(\bar{a'}+\bar{b})+m\bar{a}-(m-1)\bar{a'}+n\bar{b}-(n-1)\bar{b'}\\
    &=&(m+n-1)(\bar{a}+\bar{b})+(n-1)(\bar{a'}-\bar{a})+(n-1)(\bar{b}-\bar{b'}) \\
    &\ge&(m+n-1)(\bar{a}+\bar{b}),
    \end{eqnarray*}
as claimed.
\end{proof}

Note that taking $a_i=\frac{1}{m}\Sum{k=1}{m}a_k$ and
$b_j=\frac{1}{n}\Sum{k=1}{n}b_k$ for all $i$ and $j$ shows that
equality can hold in (\ref{2-red-bound}). More generally, equality
holds whenever $a_1,\ldots,a_{m}$ and $b_1,\ldots,b_{n}$ are
arithmetic progressions of common difference. We now prove Theorem
\ref{THEbounds}.

\begin{proof} {\it of Theorem \ref{THEbounds}.} The bound in (\ref{la-lb-bound}) follows from
Lemma \ref{red-bound-calculus-formulation},
(\ref{calc-form-wellaligned}),
(\ref{well-align-lowerbound-fullycompressed}) and
(\ref{compression-doesn't-affect-param}). Consider the bound given
by (\ref{la-lb-bound}) as a discrete function in the variable $n$.
If $m=|A|$, then maximizing $n$ will minimize (\ref{la-lb-bound}).
Otherwise, it is a routine discrete calculus minimization question
to determine that $l=\lfloor
\sqrt{\frac{1}{4}+\frac{(m-1)|B|}{|A|/m-1}}+\frac{1}{2}\rfloor$ is
the value of $n$ which minimizes (\ref{la-lb-bound}), and that $l-1$
also minimizes the bound when
$\sqrt{\frac{1}{4}+\frac{(m-1)|B|}{|A|/m-1}}+\frac{1}{2}\in \Z$.
Rearranging the expression for $l$ yields (iii). If $m\geq n$ and
$|A|\leq |B|+m$, then $l\geq m\geq n$ follows, whence the minimum of
(\ref{la-lb-bound}) occurs instead at the boundary value $n=m$,
yielding (i). If $|A|\geq |B|+m$, then (\ref{la-lb-bound}) implies
that
$$|A+B|\geq |A|+|B|+\frac{n-1}{m}(|B|+m)+\frac{m-1}{n}|B|-(m+n-1).$$
Considering the left hand side as a discrete function in $n$, it is
another routine discrete calculus computation to determine $n=m$
minimizes the bound. This yields (ii). Note that when $|B|=|A|+m$
the bounds in (ii) and (i) are equal. Finally, considering the bound
given by (\ref{la-lb-bound}) as a continuous function in $n$, it
follows that $n=\sqrt{\frac{(m-1)|B|}{|A|/m-1}}$ minimizes the bound
in (\ref{la-lb-bound}) when $|A|>m$. This yields (iv) except in the
case $|A|=m$, in which case the trivial bound $|A+B|\geq |B|$
implies (iv) instead.
\end{proof}

\subsection{Measurable Sets}

\indent\indent Let $\mu_d$ be the Lebesgue measure on the space
$\R^d$, $d\geq 1$, and let $\{x_1,\ldots,x_d\}$ be the $d$ standard
unit coordinate vectors for $\R^d$. In this subsection, we briefly
show how the results of the previous section are related to sumset
volume estimates, such as the Brunn-Minkowski Theorem
\cite{taobook,Ga}. In what follows, we make implicit use of the
basic analytic theory regarding the Lebesgue measure (see e.g.
\cite{rudin}).

\begin{theirtheorem}[Brunn-Minkowski
Theorem]\label{brunn-minkowski} If $A,\,B\subseteq \R^d$ and $A+B$
are nonempty, measurable subsets, then \be\label{brunn-mink-bound}
\mu_d(A+B)^{1/d}\geq \mu_d(A)^{1/d}+\mu_d(B)^{1/d}.\ee
\end{theirtheorem}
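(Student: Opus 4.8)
Since (\ref{brunn-mink-bound}) for general $d$ is classical --- and in the case $d=2$ it will in fact fall out of the continuous refinement of Theorem \ref{THEbounds} developed in this subsection, via the compression technique --- I would prove it by the Hadwiger--Ohmann induction, which uses only the elementary regularity of $\mu_d$ alluded to above and runs in three stages. First, suppose $A=\prod_{i=1}^d[0,a_i]$ and $B=\prod_{i=1}^d[0,b_i]$ are nondegenerate axis-parallel boxes. Then $A+B=\prod_{i=1}^d[0,a_i+b_i]$, so dividing by $\mu_d(A+B)^{1/d}=\prod_i(a_i+b_i)^{1/d}$ reduces (\ref{brunn-mink-bound}) to
$$\Big(\prod_{i=1}^d\frac{a_i}{a_i+b_i}\Big)^{1/d}+\Big(\prod_{i=1}^d\frac{b_i}{a_i+b_i}\Big)^{1/d}\le 1,$$
which is immediate from AM--GM applied to each of the two products, since $\sum_i\frac{a_i}{a_i+b_i}+\sum_i\frac{b_i}{a_i+b_i}=d$. (For general compact $A,B\subseteq\R$, the case $d=1$ is the observation that $A+\min B$ and $\max A+B$ are subsets of $A+B$ meeting in at most one point, so $\mu_1(A+B)\ge\mu_1(A)+\mu_1(B)$ --- the continuous counterpart of Theorem \ref{CDT-for-Z}, which is exactly what one would apply fiberwise, as in (\ref{well-align-lowerbound}), to run a continuous compression argument.)

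Next I would induct on the total number $N$ of boxes needed to write $A$ and $B$ each as a union of nondegenerate axis-parallel boxes with pairwise disjoint interiors; the case $N=2$ is the preceding paragraph (if one of the two sets is a single box, the other is not and we swap their roles, as (\ref{brunn-mink-bound}) is symmetric). For $N>2$, assume WLOG that $A$ is a union of at least two such boxes; two of them have disjoint interiors and so, being axis-parallel boxes, are separated by some coordinate hyperplane $\{x_j=c\}$. Put $A^-=A\cap\{x_j\le c\}$ and $A^+=A\cap\{x_j\ge c\}$; each has positive measure, and each is, up to a set of measure zero, a union of strictly fewer boxes than $A$ (the box lying on the far side of $c$ contributes only a face). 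Set $\theta=\mu_d(A^-)/\mu_d(A)\in(0,1)$. Because $c'\mapsto\mu_d(B\cap\{x_j\le c'\})$ is continuous and increases from $0$ to $\mu_d(B)$, after translating $B$ along $x_j$ (which changes no volume) we may assume $B^-:=B\cap\{x_j\le c\}$ and $B^+:=B\cap\{x_j\ge c\}$ satisfy $\mu_d(B^-)=\theta\,\mu_d(B)$ and $\mu_d(B^+)=(1-\theta)\mu_d(B)$, with $B^\pm$ a union of no more boxes than $B$. Now $A^-+B^-\subseteq(A+B)\cap\{x_j\le 2c\}$ and $A^++B^+\subseteq(A+B)\cap\{x_j\ge 2c\}$ overlap in measure zero, so the induction hypothesis gives
$$\mu_d(A+B)\ge\big(\mu_d(A^-)^{1/d}+\mu_d(B^-)^{1/d}\big)^{d}+\big(\mu_d(A^+)^{1/d}+\mu_d(B^+)^{1/d}\big)^{d},$$
and substituting the values of $\mu_d(A^\pm)$ and $\mu_d(B^\pm)$ and factoring $\theta$ and $1-\theta$ out of the respective brackets turns the right-hand side into $\big(\theta+(1-\theta)\big)\big(\mu_d(A)^{1/d}+\mu_d(B)^{1/d}\big)^{d}$, which is the desired bound.

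To reach arbitrary measurable $A,B$ with $A+B$ measurable, inner regularity reduces matters to $A,B$ compact (take compact $K\subseteq A$, $L\subseteq B$ with $\mu_d(K)\to\mu_d(A)$, $\mu_d(L)\to\mu_d(B)$ and use $K+L\subseteq A+B$). For compact $A,B$, apply the union-of-boxes case to increasing finite unions of dyadic cubes exhausting the open $\varepsilon$-neighborhoods $A_\varepsilon\supseteq A$ and $B_\varepsilon\supseteq B$; letting $\varepsilon\downarrow 0$, using $A_\varepsilon+B_\varepsilon=(A+B)_{2\varepsilon}$ and the compactness of $A$, $B$, $A+B$ (so that these neighborhoods shrink to the sets themselves in measure), yields (\ref{brunn-mink-bound}).

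The only genuinely non-routine point --- and the step I would flag as the main obstacle --- is the translation in the inductive step: one must force a \emph{single} coordinate hyperplane to split $A$ and $B$ in the \emph{same} proportion $\theta$ while neither $A^\pm$ nor $B^\pm$ acquires extra boxes (the $A$-side loses one, the $B$-side is unchanged), which is why the induction must be set up on the combined box count rather than on the ambient dimension. If one prefers to avoid this combinatorial bookkeeping, (\ref{brunn-mink-bound}) can instead be deduced from the Pr\'ekopa--Leindler inequality applied to $\mathbf{1}_{A'}$ and $\mathbf{1}_{B'}$, where $A'=\mu_d(A)^{-1/d}A$ and $B'=\mu_d(B)^{-1/d}B$, with weight $\theta=\mu_d(A)^{1/d}/(\mu_d(A)^{1/d}+\mu_d(B)^{1/d})$, noting that $\theta A'+(1-\theta)B'\subseteq\mu_d(A+B)^{-1/d}(A+B)$; and Pr\'ekopa--Leindler is itself proved by the same one-coordinate-at-a-time induction whose one-dimensional base case is, once more, the essentially-disjoint-halves observation from the first paragraph.
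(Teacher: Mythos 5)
The paper states the Brunn--Minkowski Theorem as classical background (it appears as a cited black box, with the reader referred to \cite{Ga} and \cite{taobook}) and gives no proof of it; the only related argument in the paper is the proof of Theorem~\ref{brunn-mink-case}, which uses the one-dimensional instance $\mu_1(X+Y)\ge\mu_1(X)+\mu_1(Y)$ as an ingredient and yields the $d=2$ case of Brunn--Minkowski as a corollary via the compression machinery. So there is no paper proof to compare against.

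Your Hadwiger--Ohmann induction is the standard proof and is correct. The box case via AM--GM is right; the induction on the combined box count $N$ is set up properly, with the crucial observation that a coordinate hyperplane strictly separating two boxes of $A$ makes each of $A^{\pm}$ a union of at most $N_A-1$ boxes while, after the sliding translation of $B$, each of $B^{\pm}$ is a union of at most $N_B$ boxes, so both pairs $(A^\pm,B^\pm)$ see the hypothesis with a strictly smaller count; and the passage from box unions to compact sets to general measurable sets with $A+B$ measurable, using $A_\varepsilon+B_\varepsilon=(A+B)_{2\varepsilon}$ and inner regularity, is sound. One small slip in the closing aside: the set $\theta A'+(1-\theta)B'$ (with your choice of $\theta$, $A'$, $B'$) is exactly $(\mu_d(A)^{1/d}+\mu_d(B)^{1/d})^{-1}(A+B)$, not a subset of $\mu_d(A+B)^{-1/d}(A+B)$; stating that containment presupposes the inequality one is trying to prove. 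The Pr\'ekopa--Leindler deduction still works once one writes the set identity and concludes $\mu_d\big((\mu_d(A)^{1/d}+\mu_d(B)^{1/d})^{-1}(A+B)\big)\ge 1$, but the containment as written has the direction reversed. This is in an optional remark and does not affect your main argument.
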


Let $\phi_i:\R^2\rightarrow \R$ denote the canonical projection onto
the $i$-th coordinate, $i=1,2$.  Theorem \ref{brunn-mink-case} below
can be regarded as an extension of Theorem \ref{THEbounds} to the
continuous case. Since there are measurable sets $X\subset \R^2$
with $\phi_1 (X)$ not $\mu_1$--measurable, the assumption of $\phi_1
(A)$ and $\phi_1 (B)$ being measurable in Theorem
\ref{brunn-mink-case} is necessary. However, without this condition,
one may always find subsets $A'\subset A$ and $B'\subset B$ with
$\mu_2 (A\setminus A')=\mu_2 (B\setminus B')=0$ such that $\phi_1
(A')$, $\phi_1 (B')$ and $A'+B'$ are measurable (this will be
evident from the proof). Thus, Theorem \ref{brunn-mink-case} implies
the $2$-dimensional Brunn-Minkowski bound, with equality between the
two bounds only possible when
$$\mu_1(\phi_1(A'))\sqrt{\mu_2(B)}=\mu_1(\phi_1(B'))\sqrt{\mu_2(A)}.$$
 The  condition
$0<\mu_1(\phi_1(A')),\,\mu_1(\phi_1(B'))<\infty$ is not highly
restrictive since $\mu_1(\phi_1(A'))=0$ implies $\mu_2(A)=0$, and if
$\mu_1(\phi_1(A'))=\infty$, then either $\mu_2(A+B)=\infty$ or
$\mu_2(B)=0$. Thus the condition could be omitted if all indefinite
expressions were interpreted to equal zero.

\begin{thm}\label{brunn-mink-case} If $A,\,B\subseteq \R^2$, $\phi_1 (A)$, $\phi_1 (B)$ and $A+B$ are nonempty
measurable subsets with
$0<\mu_1(\phi_1(A)),\,\mu_1(\phi_1(B))<\infty$, then
\be\label{cont-case-bound}\mu_2(A+B)\geq
\left(\frac{\mu_2(A)}{\mu_1(\phi_1(A))}+\frac{\mu_2(B)}{\mu_1(\phi_1(B))}\right)(\mu_1(\phi_1(A))+\mu_1(\phi_1(B))).\ee
\end{thm}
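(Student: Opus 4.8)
The plan is to reduce the continuous inequality (\ref{cont-case-bound}) to the discrete inequality (\ref{la-lb-bound}) of Theorem \ref{THEbounds} via an approximation argument, exactly paralleling how (\ref{la-lb-bound}) itself was derived through compression and Lemma \ref{red-bound-calculus-formulation}. First I would set up a continuous analogue of the linear compression $\C_1$: for each $t\in\R$, replace the fiber $A_t=\{y:(t,y)\in A\}$ by the interval $[0,\mu_1(A_t))$ (the one-dimensional ``Steiner-type'' symmetrization/compression in the $x_2$-direction). Standard measure theory (Fubini, the measurability of $t\mapsto\mu_1(A_t)$ for measurable $A$, and the one-dimensional Brunn--Minkowski inequality $\mu_1(A_s+B_{t-s})\geq\mu_1(A_s)+\mu_1(B_{t-s})$ in the role played by Theorem \ref{CDT-for-Z} in (\ref{well-align-lowerbound})) shows this compression preserves $\mu_2(A)$, $\mu_2(B)$, $\mu_1(\phi_1(A))$, $\mu_1(\phi_1(B))$ and does not increase $\mu_2(A+B)$. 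This is where the passage to subsets $A'\subseteq A$, $B'\subseteq B$ with $\phi_1(A'),\phi_1(B'),A'+B'$ measurable and $\mu_2(A\setminus A')=\mu_2(B\setminus B')=0$ is made, as promised in the remark preceding the statement; after compression we may assume each horizontal fiber is an interval based on the $x_1$-axis.

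Next I would handle the $x_1$-direction. Having compressed, $\mu_2(A)=\int_{\phi_1(A)}g_A(t)\,dt$ where $g_A(t)=\mu_1(A_t)$, and likewise for $B$; write $m=\mu_1(\phi_1(A))$, $n=\mu_1(\phi_1(B))$. The inequality to prove becomes $\mu_2(A+B)\geq(\frac{1}{m}\int g_A+\frac{1}{n}\int g_B)(m+n)$. Since for the compressed sets $\phi_1(A+B)\supseteq\phi_1(A)+\phi_1(B)$ and, for $t=s+(t-s)$ with $s\in\phi_1(A)$ and $t-s\in\phi_1(B)$, the fiber $(A+B)_t$ contains an interval of length at least $\sup\{g_A(s)+g_B(t-s)\}$, one gets
\be\label{cont-reduction}
\mu_2(A+B)\geq\int_{\phi_1(A)+\phi_1(B)}\sup\{g_A(s)+g_B(t-s):s\in\phi_1(A),\,t-s\in\phi_1(B)\}\,dt.
\ee
The right side is precisely the continuous analogue of the left side of (\ref{2-red-bound}), with sums over $i$ replaced by integrals and the discrete index sets $[1,m]$, $[1,n]$ replaced by $\phi_1(A)$, $\phi_1(B)$; so the core is a continuous version of Lemma \ref{red-bound-calculus-formulation}.

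I would prove that continuous version either by discretizing (partition $\phi_1(A)$ and $\phi_1(B)$ into many small intervals of equal length, apply Lemma \ref{red-bound-calculus-formulation} to the sampled fiber-length values, and pass to the limit as the mesh tends to zero, using uniform integrability/dominated convergence), or directly by the same one-variable induction-on-``mass'' idea adapted to the continuum via a ``peeling off the leftmost slice'' differential inequality. The discretization route is cleanest because Lemma \ref{red-bound-calculus-formulation} is already in hand and the sup appearing in (\ref{cont-reduction}) is lower semicontinuous in the relevant sense, so Riemann-sum approximation only loses a vanishing amount. The main obstacle is the measure-theoretic bookkeeping around passing to $A'$ and $B'$: ensuring that after a single compression the fibers are genuinely intervals, that $\phi_1(A')$ is measurable, that the supremum function in (\ref{cont-reduction}) is measurable, and that (\ref{well-align-lowerbound})-style inequality survives when $A$ has null fibers or $\phi_1(A)$ is only measurable rather than open — all of which the authors flagged as ``evident from the proof.'' Once that is dispatched, chaining the $x_2$-compression bound, (\ref{cont-reduction}), and the continuous form of Lemma \ref{red-bound-calculus-formulation} yields (\ref{cont-case-bound}) directly, and the stated equality condition $\mu_1(\phi_1(A'))\sqrt{\mu_2(B)}=\mu_1(\phi_1(B'))\sqrt{\mu_2(A)}$ falls out by tracking equality in the $m\mapsto n$ optimization exactly as in the discrete proof of Theorem \ref{THEbounds}(iv).
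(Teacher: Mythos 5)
Your proposal is correct in outline but takes a genuinely different route from the paper at the final step. Both proofs open the same way: extend compression to measurable sets, compress in the $x_2$-direction using the one-dimensional Brunn--Minkowski inequality $\mu_1(X+Y)\geq\mu_1(X)+\mu_1(Y)$ to make each fiber an interval based at the axis (this is the paper's inequality (\ref{hitman})), and handle measurability via passage to $F_\sigma$-subsets $A'$, $B'$. Where you diverge is at the endgame. You propose to compress in $x_1$, discretize the resulting fiber-length profiles $g_A,g_B$ by step functions on a common mesh, apply Lemma~\ref{red-bound-calculus-formulation} to the sampled values, and let the mesh tend to zero; the $-1$ correction in Lemma~\ref{red-bound-calculus-formulation} vanishes in the limit, giving (\ref{cont-case-bound}). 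The paper instead compresses in $x_1$, notes the compressed set is the hypograph of a monotone function, approximates it by finitely many rectangular strips of equal height, and then runs a Hadwiger--Ohmann-style bisection induction on the number of strips: translate so the $x_2$-axis bisects both $\phi_1(A)$ and $\phi_1(B)$, split into $A^\pm,B^\pm$, and use $\mu_2(A+B)\geq\mu_2(A^++B^+)+\mu_2(A^-+B^-)$. Your route is more faithful to the discrete theorem because it literally re-uses Lemma~\ref{red-bound-calculus-formulation}; the paper's route is self-contained (no continuous analogue of the averaging lemma is needed) and follows the classical Brunn--Minkowski template. Two small points you should be careful about if you carry this out: (i) to apply Lemma~\ref{red-bound-calculus-formulation} you need a \emph{common} mesh length for $\phi_1(A)$ and $\phi_1(B)$, so if $\mu_1(\phi_1(A))/\mu_1(\phi_1(B))$ is irrational you need an extra limiting argument in the mesh ratio; and (ii) when you undersample the lower semicontinuous supremum $t\mapsto\sup\{g_A(s)+g_B(t-s)\}$ by taking the sampled max of step functions below $g_A,g_B$, the resulting step function lies below the true supremum, so the Riemann sums supply a lower bound on $\int\sup$ exactly as you need --- but this direction should be stated explicitly rather than appealing to generic Riemann convergence.
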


\begin{proof} The theory of compressions can be extended
to include measurable subsets of $\R^d$, though some care is needed
to verify all the basic properties still hold. For simplicity, we
restrict our attention to the case $d=2$.  Due to the extra care
that needs to be taken concerning nullsets and the measurability of
various sets, we have included many more details than would
otherwise be necessary. We may assume that $\mu_2(A+B)$ is finite,
and thus $\mu_2(A)$ and $\mu_2(B)$ as well, else the theorem is
either trivial or meaningless.

For a subset $X\subseteq \R^2$  and $i\in \{1,2\}$, let $f_{X,i}:
\phi_{3-i}(X)\rightarrow [0,\infty]$ be defined as
$f_{X,i}(\phi_{3-i}(x))=\mu_1(X\cap (\R x_i+x))$ if $X\cap (\R x_i+x)$
is measurable and otherwise $f_{X,i}(\phi_{3-i}(x))=0$. We define the
linear compression $\C_i(X)$, for $i=1,2$, by it intersections with
the lines $(\R x_i+a)$, $a\in \R^2$, by letting $\C_i(X)\cap (\R
x_i+a)$ be the subset of $\R x_i+a$ defined by
$$\phi_{i}(\C_i(X)\cap (\R x_i+a))=[0,f_{X,i}(\phi_{3-i}(a))],$$
if $X\cap (\R x_i+a)$ is nonempty, and letting $\C_i(X)\cap (\R
x_i+a)$ be empty otherwise. Let
$$E_i(X):=\{x\in \C_i(X)\mid \phi_{i}(x)=f_{X,i}(\phi_{3-i}(x))\}$$ be those
points with maximal $x_i$ coordinate in $\C_i(X)$.

We recall that an arbitrary measurable subset $A\subseteq \R^2$
contains an $F_{\sigma}$--set $A'$ with $\mu_2(A\setminus A')=0$.
 By the
continuity of addition, the sumset of two   $F_{\sigma}$--sets is an
$F_{\sigma}$--set, and thus measurable. Similarly, the projection
$\phi_1 (A')$ is also an $F_{\sigma}$-set and thus $\mu_1$-measurable.

Suppose now that $\phi_1 (A)$ is measurable. Then
$U=\phi_1(A)\setminus \phi_1 (A')$ is also measurable. Let
$U'\subset U$ be an $F_{\sigma}$--set with $\mu_1 (U\setminus
U')=0$. Then $\tilde{A}=A'\cup (\phi_1^{-1}(U')\cap \R x_1)$ is also
an $F_{\sigma}$--set with $\mu_2 (\tilde{A})=\mu_2(A)$ and
$\mu_1(\phi_1(\tilde{A}))=\mu_1(\phi_1 (A))$.

Since each closed subset can be written as a countable union of
compact subsets, we have $\tilde{A}=\bigcup_{i=1}^{\infty}F_i$ with
$F_1\subseteq F_2\subseteq \ldots$ and each $F_i$ a compact subset.
Furthermore, each $F_i=\bigcap_{j=1}^{\infty}S^i_j$, with
$S^i_1\supseteq S^i_2\supseteq \ldots$ and each $S^i_j$ a finite
union of cubes (a cartesian product of   closed intervals). Passing
through cubes and compact sets, it follows that any section
$\tilde{A}\cap (\R x_i+a)$ of an $F_{\sigma}$--set is also an
$F_{\sigma}$--set (with respect to $\mu_1$). By the upper continuity
of $\mu_1$, we have
$\C_k(\tilde{A})=\C_k(\bigcup_{i=1}^{\infty}F_i)=\bigcup_{i=1}^{\infty}\C_k(F_i)\cup
\tilde{A}_1$, for $k=1,2$, where $\tilde{A}_1$ is a disjoint subset
contained in $E_k(A')$ and
$\phi_{3-k}(\C_k(\tilde{A}))=\phi_{3-k}(\bigcup_{i=1}^{\infty}\C_k(F_i))$.
On the other hand, since each compact set $F_i$ is bounded, then the
lower continuity of $\mu_1$ implies
$\C_k(\bigcap_{j=1}^{\infty}S^i_j)=\bigcap_{j=1}^{\infty}\C_k(S^i_j)$,
for $k=1,2$. Note that $\C_k(S^i_j)$, for $k=1,2$, is still a finite
union of cubes. Consequently, $\C_k(\tilde{A})\setminus \tilde{A}_1$
is an $F_{\sigma}$--set. We call
$\C(A)=\C_1(\C_2(\tilde{A})\setminus \tilde{A}_1)$ the {\it
compression} of $A$. We have
\ber\mu_1(\phi_1(A))=\mu_1(\phi_1(\tilde{A}))=\mu_1(\phi_1(\C_2(\tilde{A})))=
\mu_1(\phi_1(\C_2(\tilde{A})\setminus
\tilde{A}_1))=\mu_1(\phi_1(\C(A)). \label{grip4}\eer

Likewise define $\tilde{B}$, $\tilde{B}_1$ and $\C (B)$, and note that the corresponding equality in (\ref{grip4}) holds for $\C(B)$ as well.

Since  $\mu_2 (A+B)\ge \mu_1 (\tilde{A}\cap (\R x_2+a)) \mu_1(\phi_1
(B))$ for each $a\in \R^2$, then $\mu_1(\phi_1(B))>0$ and
$\mu_2(A+B)<\infty$ imply $\sup\{f_{\tilde{A}, 2}(x)\mid x\in
\phi_1(\tilde{A})\}<\infty$. Likewise for $\tilde{B}$.

Let $S_z=(\C_2(\tilde{A})\setminus \tilde{A}_1)\cap (\R x_1+z)$ be
an $x_1$--section. Observe that, if $\phi_2 (z)\le \phi_2 (z')$ then
$S_{z'}\subseteq S_z$ and thus $\mu_1(S_{z'})\leq \mu_1(S_z)$.
Consequently, $\C (A)$ consists precisely in the area between the
graph of the monotonic decreasing $L^+$--function
$f_{\C_2(\tilde{A})\setminus \tilde{A}_1,1}:[0,M)\rightarrow
[0,\mu_1(\phi_1(A))]$ and the $x_2$-axis,  where
$M=\sup\{f_{\tilde{A}, 2}(x)\mid x\in \phi_1(\tilde{A})\}$ (the
interval of domain may possibly be closed $[0,M]$ as well).  As both
$\mu_1(\phi_1(A))$ and $M$ are finite, $\C (A)$ is Riemann
integrable, and thus also measurable. The same is true for $\C (B)$,
from which it is then easily observed that their sumset $\C(A)+\C
(B)$ also consists of the area between the graph of a monotonic
decreasing $L^+$--function and the $x_2$-axis, and hence is measurable.

As $\C (A)$, $\tilde{A}$ and $\C_2(\tilde{A})\setminus \tilde{A}_1$ are
measurable, by Fubini's Theorem we have
\begin{eqnarray}
\mu_2(\C(A))&=&\int\!\!\!\!\int\chi_{\C_1(\C_2(\tilde{A})\setminus
\tilde{A}_1)}dx_1dx_2=\int\!\!\!\!\int\chi_{\C_2(\tilde{A})\setminus
\tilde{A}_1}dx_1dx_2=\mu_2(\C_2(\tilde{A})\setminus
\tilde{A}_1)\label{grip2} \\&=& \int\!\!\!\!\int\chi_{\C_2(\tilde{A})\setminus
\tilde{A}_1}dx_2dx_1=
\int\!\!\!\!\int\chi_{\C_2(\tilde{A})}dx_2dx_1=
\int\!\!\!\!\int\chi_{\tilde{A}}dx_2dx_1=\mu_2(\tilde{A})=\mu_2(A),\nn\end{eqnarray}
where $\chi_T$ denotes the characteristic function of the set $T$.
Likewise, \be \mu_2(\C (B))=\mu_2(B).\label{grip3}\ee

Since $\tilde{A}$ and $\tilde{B}$ are $F_{\sigma}$-sets, each $x_2$-section of $\tilde{A}$ or $\tilde{B}$ is also an
$F_{\sigma}$--set (with respect to $\mu_1$). Hence, letting $X_z$ denote in (\ref{hitman})
below the $x_2$-section $(\R x_2+z)\cap X$ of  $X\subseteq \R^2$,
\ber\nn\mu_1((A+B)_z)=\mu_1(\bigcup_{x+y=z}(A_x+B_y))\geq
\sup\{\mu_1(\tilde{A}_x+\tilde{B}_y)\mid x+y=z\}\\\geq\label{hitman}
\sup\{\mu_1(\tilde{A}_x)+\mu_1(\tilde{B}_y)\mid
x+y=z\}=\mu_1((\C_2(\tilde{A})+\C_2(\tilde{B}))_z),\eer for $z\in
\R^2$ such that $(A+B)_z$ is $\mu_1$-measurable, where the second inequality follows from the inequality
$\mu_1(X+Y)\geq \mu_1(X)+\mu_1(Y)$ (which is the case $d=1$ in the
Brunn-Minkowski Theorem). Using Fubini's Theorem and (\ref{hitman})
(for the first inequality; the second one follows by an analogous
argument), we infer
\begin{eqnarray}
\mu_2(A+B)&=&\int\!\!\!\!\int\chi_{A+B}dx_2dx_1\geq
\int\!\!\!\!\int\chi_{\C_2(\tilde{A})+\C_2(\tilde{B})}dx_2dx_1
\nn\\
&=& \int\!\!\!\!\int\chi_{(\C_2(\tilde{A})\setminus
\tilde{A}_1)+(\C_2(\tilde{B})\setminus
\tilde{B}_1)}dx_2dx_1=\mu_2((\C_2(\tilde{A})\setminus
\tilde{A}_1)+(\C_2(\tilde{B})\setminus \tilde{B}_1))
\nn\\
&=&\int\!\!\!\!\int\chi_{(\C_2(\tilde{A})\setminus
\tilde{A}_1)+(\C_2(\tilde{B})\setminus \tilde{B}_1)}dx_1dx_2\geq
\int\!\!\!\!\int\chi_{\C_1(\C_2(\tilde{A})\setminus
\tilde{A}_1)+\C_1(\C_2(\tilde{B})\setminus
\tilde{B}_1)}dx_1dx_2
\nn\\
&=&\mu_2(\C (A) +\C (B))\label{grip1}.\eer

In view of (\ref{grip1}), (\ref{grip2}), (\ref{grip3}) and
(\ref{grip4}), we see that it suffices to prove the theorem for
$A=\C (A)$ and $B=\C (B)$. Since these are Riemann integrable, and
thus can be approximated by rectangular strips of fixed height
$\log_{2^{n}}(\mu_1(\phi_2(A)))$ and
$\log_{2^{n}}(\mu_1(\phi_2(B)))$ when $n\rightarrow \infty$, it thus
suffices to prove the theorem for unions of $2^n$ rectangular strips
of equal height, $n\in \Z^+$. We proceed by induction. If $n=1$, so
that both $A$ and $B$ are themselves rectangles of width
$\mu_1(\phi_1(A))$ and $\mu_1(\phi_1(B))$ and height
$\frac{\mu_2(A)}{\mu_1(\phi_1(A))}$ and
$\frac{\mu_2(B)}{\mu_1(\phi_1(B))}$, respectively, then
(\ref{cont-case-bound}) follows trivially. So we assume $n>1$.
Translate $A$ and $B$ so that the $x_2$-axis passes through the
midpoints of $\phi_1(A)$ and $\phi_1(B)$, and let $A^+\subseteq A$
and
 $B^+\subseteq B$ be those points with nonnegative $x_1$-coordinate, and let
$A^-\subseteq A$ and
 $B^-\subseteq B$ be those with non-positive $x_1$-coordinate.
Observing that $\mu_2(A+B)\geq \mu_2(A^++B^+)+\mu_2(A^-+B^-)$ and
applying the induction hypothesis to each of $A^++B^+$ and $A^-+B^-$
yields (\ref{cont-case-bound}), completing the proof.
\end{proof}

\section{Two-Dimensional Sets}

\indent\indent Recall that $h_1(A,B)$ denotes the minimal positive
integer $s$ such that there exist $2s$ (not necessarily distinct)
parallel lines $\ell_1,\ldots,\ell_{s},\ell'_1,\ldots,\ell'_{s}$
with $A\subseteq \bigcup_{i=1}^{s}\ell_i$ and
$B\subseteq\bigcup_{i=1}^{s}\ell'_i$. The next Lemma is analogous to
\cite[Lemma 2.2]{S2} and provides an inductive step in the proof of
Theorem \ref{final-result}.

\begin{lem} \label{lem-stan-reduction}
Let $s\geq 3$ be an integer, and let $A,\,B\subseteq \mathbb{R}^2$
be finite subsets, with $|A|\geq |B|\ge s$, such that there are no
$s$ collinear points in either $A$ or $B$. Then either:

(a) $h_1(A,B)\leq 2s-3$, or

(b) there exist $a,\,b\in \mathbb{R}^2$, a line $\ell$,  a nonempty
subset $A_0\subseteq A$ and a subset $B_0\subseteq B$, such that
$A_0\subseteq a+\ell$, $B_0\subseteq b+\ell$, $|B_0|\leq |A_0|\leq
s-1$, and
    \be\label{eq-deletion-gives-2por2}
    |A'+B'|\leq |A+B|-2(|A_0|+|B_0|),
    \ee
where $A'=A\setminus A_0$ and $B'=B\setminus B_0$.
\end{lem}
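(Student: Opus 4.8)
The plan is to argue by contradiction: assume $h_1(A,B) \geq 2s-2$, so that no $2(2s-3)$ parallel lines can cover $A$ and $B$ in the split $(2s-3, 2s-3)$ sense, and then construct the configuration in (b). The natural starting point is to pick a direction that is ``most efficient'' for $A$, i.e. a line $\ell$ through the origin such that the number $m$ of lines parallel to $\ell$ meeting $A$ is as small as possible; call the number of lines parallel to $\ell$ meeting $B$ by $n$. Since there are no $s$ collinear points in $A$, each of these $m$ lines carries at most $s-1$ points of $A$, so $m \geq \lceil |A|/(s-1)\rceil \geq |A|/(s-1)$; similarly $n \geq |B|/(s-1)$. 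If we were in case (a)'s negation, we should be able to leverage $m + n \geq h_1(A,B) \geq 2s-2$ or a variant thereof. The idea is that among the $m$ lines parallel to $\ell$ that meet $A$, at least one, say $a+\ell$, meets $A$ in a set $A_0$ with $1 \leq |A_0| \leq s-1$ which is ``extremal'' — either $A_0$ is the smallest such slice, or it is positioned so that deleting it costs little in $A+B$. We take $B_0$ to be the intersection of $B$ with the unique parallel line $b+\ell$ (if any) chosen so that $A_0 + (b+\ell) $ lies along the boundary of $A+B$ in direction $\ell$; if $B$ meets no parallel line in a useful way we set $B_0 = \emptyset$, which still requires handling.

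The heart of the matter is estimate (\ref{eq-deletion-gives-2por2}). Here I would mimic the peeling argument behind (\ref{well-align-lowerbound}): write $A+B = \bigcup_{t} (A+B)_t$ as a union over the lines $t$ parallel to $\ell$, and observe that the slices of $A+B$ in direction $\ell$ that are ``lost'' when we delete $A_0$ from $A$ and $B_0$ from $B$ can be controlled below. Specifically, if $A_0$ sits on an extreme parallel line (say the top one) and $B_0$ on the top parallel line meeting $B$, then the sum $A_0 + B_0$ occupies the topmost slice of $A+B$; moreover the next slice down is governed by $A_0 + B_1'$ and $A_1' + B_0$ (where $A_1', B_1'$ are the adjacent slices), and by Theorem~\ref{CDT-for-Z} (the $|X|+|Y|-1$ bound on a line) each lost slice contributes at least the appropriate count. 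Summing, one should get that removing $A_0$ (with $|A_0|=a_0$) and $B_0$ (with $|B_0|=b_0$) removes from $A+B$ at least the contribution of roughly $a_0 + b_0 + (a_0 - 1) + (b_0 - 1) + \cdots$; the claim $|A+B| - |A'+B'| \geq 2(a_0+b_0)$ is exactly of this shape, coming from the topmost slice ($\geq a_0 + b_0 - 1$) plus the second slice's loss ($\geq a_0 + b_0 - 1$) plus a $+2$ correction, and I would want $a_0, b_0$ chosen minimal enough that the compressed picture makes this tight. An alternative, cleaner route: pass to the fully compressed sets $\C_X(A), \C_X(B)$ as in Section 2, where in direction $x_2$ the slice sizes $a_1 \geq a_2 \geq \cdots$ and $b_1 \geq b_2 \geq \cdots$ are monotone, take $A_0$ to be the bottom slice of the compressed $A$ and $B_0$ the bottom slice of compressed $B$, use (\ref{calc-form-wellaligned}) to read off $|A+B|$ and $|A'+B'|$ explicitly, and verify the inequality $\sum \max - \sum' \max \geq 2(a_m + b_n)$ by a direct comparison of the two maxima expressions, exploiting monotonicity. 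This reduces (\ref{eq-deletion-gives-2por2}) to a finite combinatorial inequality about the arrays $(a_i),(b_j)$.

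The role of the dichotomy is then: either the efficient direction $\ell$ already covers $A$ and $B$ in at most $2s-3$ lines each — that is $m, n \leq 2s-3$, giving (a) — or at least one of $m, n$ exceeds $2s-3$, in which case (since slices have size $\leq s-1$) we have plenty of slices to work with, and in particular the minimal slice of $A$ has size $a_0 = a_m \leq s-1$ and $b_0 = b_n \leq a_0$ (relabeling to keep $|B_0| \leq |A_0|$), so the constructed $(A_0, B_0, \ell)$ meets all the stated size constraints and (\ref{eq-deletion-gives-2por2}) gives (b). The main obstacle I anticipate is getting the constant exactly right in (\ref{eq-deletion-gives-2por2}): the naive peeling gives $|A+B| - |A'+B'| \geq (a_0+b_0-1) + (a_0 + b_0 - 1) = 2(a_0+b_0) - 2$, which is short by $2$; recovering the extra $+2$ will require either using that the topmost \emph{two} slices each genuinely lose more than the CDT minimum (because the relevant line-sums in direction $\ell$ are themselves not single progressions when $h_1$ is large), or a more careful choice of which slice to call $A_0$ — e.g. choosing $A_0$ not as a globally minimal slice but as one adjacent to a ``jump'' in the slice-size sequence, so that the two removed boundary slices of $A+B$ are strictly larger than the generic estimate. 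Reconciling this constant with the hypotheses $s \geq 3$ and $|A| \geq |B| \geq s$ (which guarantee at least $s$ slices, hence enough room) is where the delicate bookkeeping will live, and it is essentially the step that forces the $2s-3$ (rather than something smaller) in alternative (a).
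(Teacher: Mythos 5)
Your proposal replaces the paper's convex-hull argument with a compression/slice-peeling argument, and there are two places where this does not go through.

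First, the compression route is structurally blocked: the lemma requires $A_0\subseteq A$ and $B_0\subseteq B$ for the \emph{original} sets, whereas the bottom slice of $\C_X(A)$ consists of shifted, compressed points that are in general not elements of $A$. Moreover, (\ref{calc-form-wellaligned}) computes $|\C_X(A)+\C_X(B)|$, which is only a \emph{lower} bound for $|A+B|$, and likewise for $|A'+B'|$. To certify (\ref{eq-deletion-gives-2por2}) you need an upper bound on $|A'+B'|$ relative to $|A+B|$; having both quantities under control only from below does not let you control their difference, and the gap between a set and its compression can be arbitrarily large. So you cannot ``read off'' both cardinalities from (\ref{calc-form-wellaligned}) in the way the sketch proposes.

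Second, even working directly with slices of the uncompressed $A$ and $B$ in the chosen direction, you yourself acknowledge the count is short by $2$. That shortfall is not a small fix. The loss in the second-from-the-top slice is $|(A_{m-1}+B_n)\cup(A_m+B_{n-1})|-|A_{m-1}+B_{n-1}|$, which can be $0$; in fact, whenever the slices are arithmetic progressions with compatible step (which is exactly the compressed, ``worst'' case your route reduces to) the peeling argument gives strictly less than $2(|A_0|+|B_0|)$. The paper gets the full constant by a completely different mechanism: assuming (b) fails, it runs Ruzsa's ``maximal decomposition'' argument (Claim \ref{claim:lat}) to place $A+B$ in a lattice, establishes parallelism between sides of $\conv(A)$ and $\conv(B)$ (Claims \ref{claim:par}--\ref{claim:edge}), normalizes by an affine map, and then pins down \emph{specific} boundary and unique-expression elements of $A+B$ that vanish when a whole edge-slice $A_0$, $B_0$ is removed, supplementing the $|X|+|Y|-1$ bound on a line by the Freiman $3k-3$ theorem (Theorem \ref{thm-Freiman-3k-3-stanchescu_version}) when needed. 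None of this convex-hull/lattice machinery is present in your sketch, and it is exactly what supplies the missing $+2$ and what produces the $2s-3$ in alternative (a). As it stands the proposal is a plausible heuristic but does not prove the lemma.

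One small further inaccuracy: for the negation of (a) one has $\max\{m,n\}\ge 2s-2$ for every direction, not $m+n\ge 2s-2$; $h_1(A,B)$ is the minimum over directions of $\max\{m,n\}$, not of $m+n$.
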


\begin{proof} Let $\conv (X)$ denote the boundary of the convex
hull of $X$. Note, since $|A|\geq |B|\ge s$ and since neither $A$
nor $B$ contains $s$ collinear points, that both $A$ and $B$ must be
$2$-dimensional. We assume (b) is false and proceed to show (a)
holds. Note Claim \ref{claim:lat} below implies that $A$ and $B$ are
also contained in a translate of the lattice generated by $a_1-a_0$
and $a'_1-a_0$, though the particular translate may vary from $A$ to
$B$ to $A+B$.

\begin{claim}\label{claim:lat} If $f$ and $f'$ are two consecutive edges of $\conv (A)$
incident at the vertex $a_0$, with $a_1,\,a_1'\in \conv (A)\cap A$
the closest elements to $a_0$ in each of the edges $f$ and $f'$,
respectively, then the sumset $A+B$ is contained in a translate of
the lattice generated by the two vectors $a_1-a_0$ and $a'_1-a_0$.
\end{claim}

\begin{proof} We use an argument by Ruzsa \cite{R}.
 Let $b_0$ be a vertex of $\conv (B)$ such that $A^*=A\setminus \{ a_0\}$ and
 $B^*=(B\setminus \{ b_0\})+(a_0-b_0)$
are contained in the same open half plane determined by some line
through $a_0$. We may w.l.o.g. assume that $a_0=b_0=(0,0)$ and that
both $A^*$ and $B^*$ are contained in the open half plane of points
with positive abscissa. Let $x\in A+B$, $x\neq (0,0)$, and consider
all the expressions of $x$ written as a sum of elements taken from
$(A+B)\setminus \{ (0,0)\}$. Since $A$ and $B$ are finite sets, and
since all points in $A^*$ and $B^*$ have positive abscissa, it
follows that the number of summands in any such expression is
bounded. Take one expression $x= x_1+ x_2+\cdots + x_k$ with a
maximum number of summands. If $ x_i\in A^*+B^*$ for some $i$, then
$x_i$ can be split into two summands, one  in $A^*$ and one in
$B^*$, contradicting the maximality of $k$. Therefore $x$ can be
written as a sum of elements in $C=(A+B)\setminus ((A^*+B^*)\cup
\{(0,0)\})$.

Since (b) does not hold, it follows that $|C|\le 2$. Hence all
elements in $A+B$ are contained in the lattice generated by the two
elements of $C$. Let $e$ and $e'$ be the two edges incident with
$b_0$. Note we may assume the convex hull of the two rays parallel
to $e$ and $e'$ with base point $b_0=(0,0)$ is contained in the
convex hull of two rays parallel to $f$ and $f'$ with base point
$a_0=(0,0)$, since otherwise by removing $a_0$ from $A$ we lose all
the points in either $|a_0+(B\cap e)|$ or $|a_0+(B\cap e')|$,
yielding (b). However, in this case, it is easily seen that
$\{a_1,a'_1\}\subseteq C$, whence $|C|=2$ implies $C=\{a_1,a'_1\}$,
completing the claim.
\end{proof}

\begin{claim}\label{claim:par} For each side $e$ of $\conv (B)$, there is a side
$f$ of $\conv (A)$, parallel to $e$, such that both $A-f+e$ and $B$
are contained in the same half plane defined by $e$. Moreover,
$|B\cap e|\le |A\cap f|$.
\end{claim}

\begin{proof} Let $\ell$ be the line parallel to
$e$ that intersects $A$, and for which $A-\ell+e$ and $B$ are both
contained in the same half plane defined by $e$. Let $f=\ell\cap
\conv(A)$ and let $A_{f}=A\cap \ell$.  In view of Theorem
\ref{CDT-for-Z}, we see that by removing the elements of $A_{f}$ we
lose $|A_{f}+B_{e}|\ge |A_{f}|+|B_{e}|-1$ elements from $A+B$, where
$B_e=B\cap e$. Since (b) does not hold, it follows that
$|A_{f}|+|B_e|-1< 2|A_{f}|$, whence $2\le |B_e|\le |A_{f}|$. In
particular, $f$ is an edge of the convex hull of $A$.
\end{proof}

Let $e$ and $e'$ be two consecutive edges of $\conv (B)$, and let
$f$ and $f'$ be the corresponding parallel edges in $\conv (A)$ as
given by Claim \ref{claim:par}. Denote the elements in $B_e:=B\cap
e$ by $b_0,\,b_1,\,\ldots,\,b_t$, ordered as they occur in the edge
$e$, and the ones in $A_f:=A\cap f$ by $a_0,\,a_1,\,\ldots ,\,a_r$,
ordered in the same direction as those of $B_e$. Likewise define
$b'_0=b_0,\,b_1',\,\ldots,\,b_{t'}'$ and $a'_0,\,a_1',\,\ldots
,\,a_{r'}'$ for the points in $B_{e'}:=B\cap e'$ and $A_{f'}:=A\cap
f'$. Note $a_0=a'_0$ need not hold, though as we will soon see
(Claim \ref{claim:edge}) this cannot fail by much.

\begin{claim}\label{claim:ap} With the notation above,
$b_0-b_1=a_0-a_1$.\end{claim}

\begin{proof} Let $f''\neq f$ be the edge adjacent to $a_0$ and let
$a''\neq a_1$ be the element of $\conv(A)\cap A$ adjacent to $a_0$.
If the claim is false, then, by removing $a_0$ from $A_{f}$ and
$b_0$ from $B_e$, we lose from $A+B$ the distinct elements
$a_0+b_0$, $a_0+b_1$, $a_1+b_0$ and either $b_0+a''$ or $a_0+b'_1$,
yielding (b).
\end{proof}

\begin{claim}\label{claim:edge} With the notation above, either: (i) $f$ and $f'$
are also consecutive, or (ii) they are separated by a single edge
$g$ of $\conv (A)$, and $A\cap g$ contains exactly two points.
\end{claim}

\begin{proof}
Traverse the convex hull of $A$, beginning at $a_0$ and in the
direction not given by $f$. Let $a_0,c_1,c_2,\ldots,c_k,a'_0$ be the
sequence of points on $\conv(A)$  encountered until the first point
$a'_0$ of $f'$ is reached. If the claim is false, then $k\geq 1$.
Hence, by removing $a_0$ from $A$ and $b_0$ from $B$, we lose from
$A+B$ the elements $a_0+b_0$, $b_0+a_1$, $b_0+c_i$ for
$i=1,\ldots,k$, and $b_0+a'_0$, yielding (b).
\end{proof}

Following our current notation, let $e''\neq e$ and $f''\neq f$ be
the edges of $\conv (B)$ and $\conv (A)$ incident to $b_t$ and
$a_r$, respectively. Denote by $a''_0=a_r,\, a''_1,\,\dots
,\,a''_{r''}$ and $b''_0=b_t,\, b''_1,\,\ldots ,\,b''_{t''}$ the
elements of $A_{f''}:=A\cap f''$ and $B_{e''}:=B\cap e''$, ordered
as they occur in their respective edge.

By an appropriate affine transformation, we may  assume that
$b_0=(0,0)$, $b_1=(1,0)$ and $b'_1=(0,1)$  and that both $A$ and $B$
are contained in the positive first quadrant. We denote by
$\phi_1:\R^2\rightarrow  \R$ the projection onto the first
coordinate. Let $A_i=A\cap \{y=i\}$ and let $B_i=B\cap \{y=i\}$.

If $\phi_1(b_t)> \phi_1(a_r)-\phi_1(a_0)$, and in particular, if
$\phi_1(b_t)> \phi_1(a_r)$, then the removal of $A_0$ from $A$
results in a loss of at least $|b_0+A_0|+|b_t+A_0|=2|A_0|$ elements
from $A+B$, yielding (b). Therefore,
\begin{equation}\label{eq:a0}\phi_1(b_t)\le \phi_1(a_r)-\phi_1(a_0).\end{equation}
Furthermore, if $\phi_1(b_t)= \phi_1(a_r)-\phi_1(a_0)$, then we
likewise conclude that (b) holds, by removing $A_0$ from $A$, unless
$A_0+B_0=\{b_0,b_t\}+A_0$. However, in view of Claims
\ref{claim:lat} and \ref{claim:ap}, this is only possible if $A_0$
is an arithmetic progression of difference $a_1-a_0$. We proceed in
two cases.

\textbf{Case A:} Claim \ref{claim:edge}(i) holds for the pair $f$
and $f'$. In this case, $a_0=a'_0$ and w.l.o.g. $a_0=b_0=(0,0)$. By
Claim \ref{claim:ap}, it follows that \be\label{eq:pairs}
b_0-b_1=a_0-a_1
\mbox{ and }b_0-b_1'=a_0-a_1'. \ee Thus  $a_1=b_1=(1,0)$ and
$a'_1=b'_1=(0,1)$. By Claim \ref{claim:lat}, it follows in view of
$0\in A\cap B$ that $A$, $B$, and $A+B$ are contained in the integer
lattice. Moreover, in view of Claim \ref{claim:ap} and Claim
\ref{claim:lat} applied to $a_r$, it follows that
\be\label{eq:pairsbis} b_t-b_{t-1}=a_r-a_{r-1}=a_1-a_0=(1,0) \mbox{
and } a''_1\in A_1. \ee Figure 1 shows a picture of the situation.
\begin{figure}[ht]\label{caseone}
\setlength{\unitlength}{8mm}
\begin{center}
\begin{picture}(10,4)
\put(0,0){\line(1,0){10}} \put(0,1){\line(1,0){10}}
\put(0,0){\line(0,1){4}} 

 \put(0,0){\circle*{0.2}} \put(0,0){\circle{0.3}}
 \put(0,1){\circle*{0.2}} \put(0,1){\circle{0.3}}
 \put(1,0){\circle*{0.2}} \put(1,0){\circle{0.3}}
 \put(8,0){\circle*{0.2}}
 \put(9,0){\circle*{0.2}}
\put(-1,-0.7){\small $a_0=b_0$}\put(-1.7,0.9){\small $a_1'=b_1'$}
\put(0.8,-0.7){\small $a_1=b_1$} \put(7.5,-0.7){\small
$a_{r-1}$}\put(9,-0.7){\small $a_{r}=a''_0$} \put(7.9,1.2){\small
$a''_1$}\put(7.7,1){\circle*{0.2}} \put(-1.1,2.5){$e',
f'$}\put(4,-0.5){$e,
f$}\put(9,0){\line(-5,4){3}}\put(6.4,2.5){$f''$}
\end{picture}
\end{center}
\vspace{3mm} \caption{A picture of Case A.}
\end{figure}
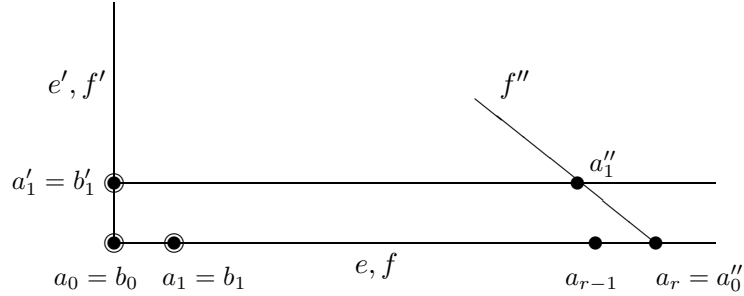
In view of Claim \ref{claim:par} and (\ref{eq:a0}), it follows that
$A\cup B$ is contained in the region defined by the lines $y=0$,
$x=0$ and the line defined by $f''$.

\textbf{Subcase A.1:}  $A_0$ is not in arithmetic progression. Thus
it follows, in view of the equality conditions for
(\ref{eq:a0}), that \be\label{eq:super2}\phi_1(b_t)<\phi_1 (a_r).\ee
In view of Theorem \ref{CDT-for-Z} and the assumption of the
subcase, it follows that $|A_0+B_0|\geq |A_0|+|B_0|$. Hence \be
\label{eq:b<a}|B_0|<|A_0|\leq s-1,\ee since otherwise $|(A\setminus
A_0)+B|\le |A+B|-(|A_0|+|B_0|)\le |A+B|-2|A_0|$ yielding (b).
Consequently, \be\label{eq:super}\phi_1 (a''_1)\le \phi_1(a_r),\ee
since otherwise deletion of $A_0$ from $A$ and $B_0$ from $B$
decreases $A+B$ by at least
$$|A_0+B_0|+|A_0+b'_1|+|B_0+a''_1|\ge 2(|A_0|+|B_0|)$$ elements,
yielding (b) (note Claim \ref{claim:par} gives $|B_0|\leq |A_0|$).

If $ \phi_1(a_r)\leq 2s-4,$ then in view of (\ref{eq:super}) it
follows that $A\cup B$ is contained in the $2s-3$ vertical lines
$x=i$, $0\le i\le 2s-4$, and (a) holds. Therefore we may assume
$\phi_1(a_r)\ge 2s-3\geq |A_0|+|B_0|-1.$
Since $\gcd(\phi_1(A_0))=1$, we can apply the Theorem
\ref{thm-Freiman-3k-3-stanchescu_version} to $A_0$ and $B_0$, with
$\delta=0$ in view of (\ref{eq:super2}). Thus, since
$\phi_1(a_r)\geq |A_0|+|B_0|-1,$ it follows that by removing the
elements of $A_0$ and $B_0$ from $A$ and $B$, respectively, we
decrease the cardinality of $A+B$ by at least
\be\label{eq:2s-2}|A_0+B_0|+|(A_0+B_1)\cup (B_0+A_1)|\ge
(|A_0|+2|B_0|-2)+|(A_0+B_1)\cup (B_0+A_1)|.\ee If $|B_1|\ge 2$,
then, from Theorem \ref{CDT-for-Z} and the assumption of the
subcase, it follows that $|A_0+B_1|\ge |A_0|+|B_1|\ge |A_0|+2$,
whence (\ref{eq:2s-2}) yields (b). Therefore $|B_1|=1$ and
$|(B_0+A_1)\setminus (A_0+B_1)|\le 1$. Consequently, \be
\label{trylinium}\phi_1 (a''_1)\le \phi_1(a_r)-\phi_1(b_{t-1}),\ee
with equality possible only if $a''_1+b_t$ is a unique expression
element in $A+B$.

Let $b$ be the intersection of $e''$ with the line $y=1$. By Claim
\ref{claim:par} and (\ref{eq:super}), the slope of $e''$ is no
steeper than the slope of $f''$. Hence (\ref{trylinium}) and
(\ref{eq:pairsbis}) yield \be\label{snakeye}\phi_1 (b_t)-\phi_1
(b)\ge \phi_1 (a_r)-\phi_1 (a''_1)\ge \phi_1 (b_{t-1})=\phi_1
(b_t)-1.\ee Consequently, $\phi_1(b)\leq 1$. If $\phi_1(b)=0$, then
it follows in view of (\ref{eq:b<a}) that $|B|=|B_0|+1\leq s-1$, a
contradiction. Therefore $\phi_1(b)>0$, which is only possible if
equality holds in (\ref{trylinium}), else the estimate from
(\ref{snakeye}) improves by $1$. Thus $a''_1+b_t$ is a unique
expression element, so that if $e''$ and $f''$ were parallel, then
by removing $a_r$ from $A$ and $b_t$ from $B$ we would lose the
elements $a_r+b_t$, $a_r+b_{t-1}=a_{r-1}+b_{t}$, $a''_1+b_t$ and
$a_r+b''_1$, yielding (b). So we may assume $e''$ and $f''$ are not
parallel, whence the estimate in (\ref{snakeye}) becomes strict,
yielding $0<\phi_1(b)<1$.

As a result, if $|B_0|\geq 3$, then (\ref{eq:b<a}) implies $|B|\leq
|B_0|+1\leq s-1$, a contradiction. Therefore $|B_0|=2$. Thus, since
$|A_0+B_0|\geq |A_0|+|B_0|=|A_0|+2$ and since $|(A_0\setminus
a_r)+(B_0\setminus b_t)|=|A_0\setminus a_r|$ (in view of
$|B_0\setminus b_t|=1$), it follows that removing $a_r$ from $A_0$
and $b_t$ from $B_0$ deletes at least three points from $A+B$
contained in $A_0+B_0$ as well as the unique expression element
$a''_1+b_t$, yielding (b), and completing the subcase.

\textbf{Subcase A.2:} $A_0$ is in arithmetic progression. We proceed
to verify that
 \be \label{eq:claim12} \phi_1 (a''_1)\le
\phi_1(a_r)+1 . \ee Suppose (\ref{eq:claim12}) is false. Since (b)
does not hold, it follows that \be\label{eq:a0b0}
|A_0+B_0|+|(A_0+B_1)\cup (A_1+B_0)|<2(|A_0|+|B_0|),\ee where the
left hand side is a lower bound for the number of elements deleted
from $A+B$ when removing $A_0$ from $A$ and $B_0$ from $B$. Since
$|(A_0+B_1)\cup (A_1+B_0)|\ge |A_0+b'_1|+|a''_1+B_0|$ (in view of
(\ref{eq:claim12}) not holding), we see that (\ref{eq:a0b0}) implies
$|A_0+B_0|=|A_0|+|B_0|-1$. Hence Theorem \ref{CDT-for-Z} implies
that both $A_0$ and $B_0$ are arithmetic progressions with the same
difference. Moreover, $|(A_0+B_1)\cup (A_1+B_0)|=
|A_0+b'_1|+|a''_1+B_0|$, whence (\ref{eq:claim12}) not holding
implies that $a_r+(1,1)\notin (A_0+B_1)\cup (A_1+B_0)$. From the
previous two sentences, we see that if $a_r=a+b_i$, with $a\in A_1$
and $i<t$, then $b_i+(1,0)=b_{i+1}\in B_0$ and
$a_r+(1,1)=a+b_i+(1,0)\in A_1+B_0$, a contradiction. Likewise, if
$a_r=a_i+b$, with $b\in B_1$, then $i=r$. As a result, we conclude
that $a_r+b'_1=a_r+(0,1)$ has at most two expressions in $A+B$, the
second one being possibly $a+b_t$ for some $a\in A_1$. Hence, by
deleting $a_r$ from $A_0$ and $b_t$ from $B_0$, we lose the four
elements $a_r+b_t,\, a_r+b_{t-1}=a_{r-1}+b_t,\, a_r+b'_1=a_r+(0,1)$,
and $z$, where $z$ is the element of $A+B$ contained on the line
$y=1$ with $\phi_1(z)$ maximal (note $\phi_1(z)\geq
\phi_1(a''_1+b_t)>\phi_1(a_{r}+b'_1)$). Thus  (b) follows, and so we
may assume that (\ref{eq:claim12}) does indeed hold.

We can now conclude Case A.  If either $A_0$ or $A'_0=A\cap \{x=0\}$
are not in arithmetic progression, then (a) holds by Subcase A.1
applied to either the lines $y=0$ or $x=0$. Otherwise, both $A_0$
and $A'_0$ are arithmetic progressions and, by (\ref{eq:claim12})
and (\ref{eq:a0}) applied both to the lines $x=0$ and $y=0$, it
follows in view of Claim \ref{claim:par} that $A\cup B$ is contained
in the at most $2s-3$ lines with slope $1$ passing through the
points in $A_0\cup A'_0$, yielding (a).

\textbf{Case B:} Claim \ref{claim:edge}(ii) holds for the pair $f$
and $f'$. This case is slightly simpler than Case A, and we use very
similar arguments. Recall that $b_0=(0,0)$, $b_1=(1,0)$,
$b'_1=(0,1)$ and both $A$ and $B$ are contained in the positive
first quadrant. We may also assume $f$ is contained in the
horizontal axis and $f'$ is contained in the vertical axis;
furthermore, by the same arguments used to establish
(\ref{eq:pairsbis}), we have $a_0=(1/d,0)$, $a_1=(1/d+1,0)$,
$a'_0=(0,1/d')$ and $a'_1=(0,1/d'+1)$, for some $d,\,d'\in \R^+$,
and $a''_1\in A_{1/d'}$. From Claim \ref{claim:lat} (applied both to
$f$ and $g$ and to $g$ and $f'$) we conclude $d,\,d'\in\Z^+$ and
that the lines defined by $a'_0$ and $a_0$ and by $a'_1$ and $a_1$
must be parallel, which implies $d=d'$ (Figure 2 illustrates the
argument);
\begin{figure}[ht]\label{casetwo}
\setlength{\unitlength}{3mm}
\begin{center}
\begin{picture}(12,8)
\put(0,0){\line(0,1){8}} \put(0,2){\line(3,-2){3}}
\put(3,1){\line(1,0){1}} \put(4,0){\line(0,1){1}}
\put(0,8){\line(3,-2){12}} \put(0,0){\line(1,0){12}}
\put(3,0){\line(0,1){6}}

\put(-2.5,.3){\small $1/d'$}  \put(.37,-1.5){\small $1/d$}

 \put(0,2){\circle*{0.3}} \put(-1.5,2.2){$a'_0$}
 \put(0,8){\circle*{0.3}} \put(-1.5,8){$a'_1$}
 \put(3,0){\circle*{0.3}} \put(3.2,-1.5){$a_0$}
 \put(12,0){\circle*{0.3}} \put(12,-1.5){$a_1$}
\put(-1,4.5){$1$} \put(2,3){$1$} \put(7.5,-1.5){$1$}

\end{picture}
\end{center}
\vspace{3mm} \caption{Why $d=d'$.}
\end{figure}
moreover, we have that $A+B$ is contained within the lattice
$(1/d,0)+\Z(1,0)+\Z(-1/d,1/d)$. As in Case A, we have $A$ contained
in the region defined by the lines $x=0$, $y=0$ and the line defined
by $f''$.

Since $A+B$ is contained within the lattice
$(1/d,0)+\Z(1,0)+\Z(-1/d,1/d)$, by removing $b_0$ from $B$ and $a_0$
and $a'_0$ from $A$, we lose all the elements of $A+B$ contained
within the two lines with slope $-1$ passing through $a_0$ and
$a_1$, i.e., all the elements from \ber\nn(b_0+\{a_0,a'_0\})\cup
(b_0+\{a_1,a'_1\})\cup
(\{a_0,a'_0\}+\{b_1,b'_1\})=\\\nn\{(0,1/d),(1/d,0), (1+1/d,0),
(0,1+1/d), (1,1/d), (1/d,1)\}.\eer If $d>1$, then the above $6$
elements are distinct, and (b) follows. Therefore we may assume
$d=1$. As a result, $b_0=(0,0)$, $a_0=b_1=(1,0)$, $a_1=(2,0)$
$a'_0=b_1=(1,0)$, $a'_1=(2,0)$, and $A$, $B$ and $A+B$ are contained
in the integer lattice.

Let us show that \be\label{eq:case2} \phi_1 (a''_1)\le \phi_1
(a_r).\ee Suppose on the contrary that (\ref{eq:case2}) does not
hold. Then it follows, in view of Theorem \ref{CDT-for-Z} and
$a''_1\in A_1$, that by removing $A_0$ from $A$ and $B_0$ from $B$
we lose at least \be\label{eq:case2a} |A_0+B_0|+|(A_0+B_1)\cup
(A_1+B_0)|\ge |A_0|+|B_0|-1+|b'_1+A_0|+|a''_1+B_0|+|\{a'_1+b_0\}|=
2(|A_0|+|B_0|) \ee elements from $A+B$, yielding (b). So we may
assume (\ref{eq:case2}) holds.

Now, if $\phi_1 (a_r)\le 2s-4$, then it follows, in view of
(\ref{eq:case2}), Claim \ref{claim:par} and (\ref{eq:a0}), that
$A\cup B$ is contained in the $2s-3$ parallel lines $x=i$, $0\le
i\le 2s-4$, yielding (a). Therefore we may assume $\phi_1 (a_r)\ge
2s-3$. Hence, since $2s-3\ge s$ for $s\ge 3$, it follows that $A_0$
is not in arithmetic progression. Furthermore, with the same
argument used to deduce (\ref{eq:b<a}), we conclude $|B_0|<|A_0|\leq
s-1$. The remainder of the proof is now just a simplification of
that of Case A.1, which proceeds as follows.

Since $\phi_1 (a_r)\ge 2s-3$ and $|A_0|>|B_0|$, we have
$\phi_1(a_r)-1\geq  |A_0|+s-3\geq |A_0|+|B_0|-1$. Thus, by the same
argument used in Case A, we conclude that (\ref{eq:2s-2}) holds. If
$|(A_0+B_1)\cup (A_1+B_0)|\geq |A_0|+2$, then (\ref{eq:2s-2})
implies (b). Therefore we may assume $|(A_0+B_1)\cup (A_1+B_0)|\leq
|A_0|+1$, and consequently, since $\{a'_1+b_0\}\cup
(b'_1+A_0)\subseteq (A_0+B_1)\cup (A_1+B_0)$ with $|\{a'_1+b_0\}\cup
(b'_1+A_0)|=|A_0|+1$, we conclude that $$(A_0+B_1)\cup
(A_1+B_0)=\{a'_1+b_0\}\cup (b'_1+A_0).$$ As a result,
$\phi_1(a''_1)+\phi_1(b_t)\leq \phi_1(a_r)$.

Let $b$ be the intersection of the edge $e''$ with the line $y=1$.
In view of in view of (\ref{eq:case2}) and Claim \ref{claim:par},
the slope of $e''$ is no steeper than that of $f''$. Thus, since
$\phi_1(a''_1)+\phi_1(b_t)\leq \phi_1(a_r)$, it follows that $\phi_1
(b_t)-\phi_1(b)\ge \phi_1 (a_r)-\phi_1(a''_1)\ge \phi_1 (b_t)$,
implying $\phi_1 (b)=0$. Hence $|B|\le |B_0\cup \{ b'_1\}|\le s-1$
(in view of $|B_0|<|A_0|$), a contradiction. This completes the
proof.
\end{proof}

The following lemma will allow us to improve, in a very particular
case, the bound given in Theorem \ref{THEbounds} by one, which will
be a crucial improvement needed in the proof of Theorem
\ref{final-result} for the extremal case $|A|+|B|\leq 4s^2-5s-1$.

\begin{lem}\label{the-monstruous-lemma} Let $X=(x_1,x_2)$ be a basis for
$\R^2$, let $s\geq 2$ be an integer, let $A,\,B\subseteq \R^2$ be
finite, nonempty subsets with $||A|-|B||\leq s$
 and $4s^2-6s+3\leq |A|+|B|\leq 4s^2-5s-1$.
Suppose that $|\phi_{X_1}(A)|\leq |\phi_{X_1}(B)|=2s-2$, where
$X_1=\R x_1$, and that some line parallel to $\R x_1$ intersects $A$
in at least $2s-2$ points. Then \be\label{onebitmore}|A+B|\geq
2|A|+2|B|-6s+7.\ee\end{lem}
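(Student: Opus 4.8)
The plan is to compress both sets along $x_1$ and analyze the resulting ``staircase'' profiles, then squeeze an extra $+1$ out of the generic bound of Theorem~\ref{THEbounds} by exploiting the two hypotheses: that $B$ meets exactly $2s-2$ lines parallel to $\R x_1$, and that some line parallel to $\R x_1$ meets $A$ in at least $2s-2$ points. First I would set $m=|\phi_{X_1}(A)|$, $n=|\phi_{X_1}(B)|=2s-2$, form $\C_X(A)$ and $\C_X(B)$ with row-sizes $a_1\ge a_2\ge\cdots\ge a_m$ and $b_1\ge b_2\ge\cdots\ge b_n$, and recall from (\ref{well-align-lowerbound-fullycompressed}), (\ref{calc-form-wellaligned}) and (\ref{compression-doesn't-affect-param}) that $|A+B|\ge \sum_{l=2}^{m+n}\max_i\{a_i+b_{l-i}\}-(m+n-1)$. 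The hypothesis ``some line meets $A$ in $\ge 2s-2$ points'' gives $a_1\ge 2s-2=n$; since $a_1\le|A|-(m-1)$ this forces $m$ to be smallish relative to $|A|$, and in particular $m\le n$ after using $||A|-|B||\le s$ together with the size window $4s^2-6s+3\le|A|+|B|\le 4s^2-5s-1$.

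Next I would run the proof of Theorem~\ref{THEbounds}(i)/(iii) but track where the estimate in Lemma~\ref{red-bound-calculus-formulation} is tight. The cleanest route is: the generic bound (\ref{la-lb-bound}) is $(\frac{|A|}{m}+\frac{|B|}{n}-1)(m+n-1)$, and I want $|A+B|\ge 2|A|+2|B|-6s+7$. Since $n=2s-2$, we have $2|A|+2|B|-6s+7 = 2(|A|+|B|)-3n+1 = (2-\frac{2}{n})(|A|+|B|) + \frac{2}{n}(|A|+|B|) - 3n+1$; plugging the lower end $|A|+|B|\ge 4s^2-6s+3 = (2s-1)(2s-3) = (n+1)(n-1) = n^2-1$ shows the target is essentially the bound of Theorem~\ref{THEbounds}(i) with $m$ replaced by $n$ at the extremal configuration, plus exactly $1$. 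So the whole game is the ``$+1$''. I would obtain it by showing that equality throughout the chain (\ref{well-align-lowerbound})–(\ref{calc-form-wellaligned})–Lemma~\ref{red-bound-calculus-formulation} would force $\C_X(A)$ and $\C_X(B)$ to be rectangles (all $a_i$ equal, all $b_j$ equal) with arithmetic-progression rows aligned in the same direction — that is the equality case recorded right after Lemma~\ref{red-bound-calculus-formulation}. But a rectangle covered by exactly $n=2s-2$ lines parallel to $\R x_1$ and having a row of size $\ge 2s-2$ has $|B|\ge (2s-2)^2 = 4s^2-8s+4$; combined with $m\le n$ and $a_1\ge 2s-2$ one checks $|A|\ge$ something that pushes $|A|+|B|$ above the allowed ceiling $4s^2-5s-1$ (or else violates $||A|-|B||\le s$). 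Hence equality is impossible, the integer-valued quantity $|A+B|$ exceeds the generic bound by at least $1$, and (\ref{onebitmore}) follows.

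More carefully, I expect one cannot simply invoke ``not a rectangle $\Rightarrow$ gain $1$'' as a black box, because the loss in Lemma~\ref{red-bound-calculus-formulation} can be fractional; so the honest argument tracks the induction in that lemma. At each peeling step $u(a,b)\ge u(a',b)+a_1+b_1$ the slack over $(m+n-1)(\bar a+\bar b)$ is $(n-1)(\bar a' - \bar a) + (n-1)(\bar b - \bar b')\ge 0$, and this is $0$ only when $\bar a'=\bar a$ and $\bar b=\bar b'$, i.e. $a_1=\bar a$ and $b_n$... (chasing this down the recursion yields all $a_i$ equal and all $b_j$ equal). Since $\C_X(A)$ has a row of size $a_1\ge 2s-2 = n$, if all rows of $\C_X(A)$ are equal then $|A| = m a_1 \ge m(2s-2)$; feeding this and $|B|=n b_1$ with the trivial $b_1\ge\lceil |B|/n\rceil$ into $|A|+|B|\le 4s^2-5s-1$ and $||A|-|B||\le s$ rapidly gives a contradiction with the lower bound $|A|+|B|\ge 4s^2-6s+3$ once $m\ge 2$; and the case $m=1$ is impossible since then $A$ lies in one line, contradicting $h_1$-type size constraints (indeed $m=1$ with $a_1\ge 2s-2$ and $|A|\ge|B|-s\ge 2s^2-\cdots$ is absurd). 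Therefore the slack is at least $1$ at some peeling step; since every term $\max_i\{a_i+b_{l-i}\}$ is an integer and $|A+B|$ is an integer, rounding gives the extra $+1$, proving (\ref{onebitmore}).

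The main obstacle, as flagged, is making the ``$+1$'' rigorous: controlling the fractional slack in Lemma~\ref{red-bound-calculus-formulation} and pinning down its exact equality configuration, then verifying by a short case check on $m$ (and the sizes $|A|,|B|$) that this configuration is excluded by the hypotheses $|\phi_{X_1}(B)|=2s-2$, $a_1\ge 2s-2$, $||A|-|B||\le s$, and $4s^2-6s+3\le|A|+|B|\le 4s^2-5s-1$. A secondary nuisance is handling the boundary rows (the $b_n$, $a_m$ terms that the max in (\ref{calc-form-wellaligned}) cannot reach for extreme $l$), but these only help — they can never decrease $|A+B|$ below the bound from Lemma~\ref{red-bound-calculus-formulation} — so they do not threaten the argument.
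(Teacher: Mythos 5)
Your high-level plan is to work with the compressed row profiles, apply the generic bound (\ref{la-lb-bound}), and recover the extra ``$+1$'' by ruling out the equality configuration of Lemma~\ref{red-bound-calculus-formulation} and then invoking integrality. The fatal problem is a quantitative one: with $n=2s-2$ and $m=n$, the target $T=2|A|+2|B|-6s+7=2(|A|+|B|)-3n+1$ exceeds the generic bound $G=\bigl(\frac{|A|}{m}+\frac{|B|}{n}-1\bigr)(m+n-1)=(2-\frac{1}{n})(|A|+|B|)-2n+1$ by exactly $T-G=\frac{|A|+|B|}{n}-n$, and the hypothesis $|A|+|B|\ge 4s^2-6s+3=n^2+n+1$ forces $T-G\ge 1+\frac{1}{n}>1$. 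So you must gain strictly more than one unit over $G$. Ruling out the equality configuration only gives slack $>0$, and combining slack $>0$ with integrality of $|A+B|$ (or of the exact sum in (\ref{calc-form-wellaligned})) can give at most $|A+B|\ge\lceil G\rceil$ (or $G+1$ when $G$ is an integer), which is strictly less than $T$. A concrete check: for $s=4$ one has $n=6$, $|A|+|B|=43$, $G=\frac{407}{6}\approx 67.83$ and $T=69$; your argument would deliver $|A+B|\ge 68$, one short of the required $69$. The claim ``rounding gives the extra $+1$'' is the precise step that fails.

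There are also smaller issues: the reduction from $m<n$ to $m=n$ is asserted rather than proved (the paper handles $m<n$ separately, using the fact that (\ref{la-lb-bound}) as a function of $m$ is minimized at $m=n-1$ under the constraint $||A|-|B||\le s\le 2s-2$, and then deduces $|B|\ge s(2s-2)$ and hence $|A|+|B|\ge 4s^2-5s$, contradicting the upper size bound); and the initial ``rectangle'' sentence momentarily conflates $A$ (which has a long row) with $B$ (which has $n$ rows). The paper's actual argument is of a genuinely different and more fine-grained character: it assumes $|A+B|\le 2|A|+2|B|-6s+6$, sets $m=n=2s-2$, and works directly with the integer identity (\ref{calc-form-wellaligned}), choosing specific ``staircase'' representatives for each diagonal to obtain the family of inequalities (\ref{ff1})--(\ref{ee2}). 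From these it pins down $a_1+b_{2s-2}=b_1+a_{2s-2}=2s-1$, then bounds $a_j+b_{2s-j-1}\le 2s-1$ and $b_j+a_{2s-j-1}\le 2s-1$ for all $j\le s-1$, and summing gives $|A|+|B|\le 4s^2-6s+2$, contradicting the hypothesis. This avoids any reliance on a ``slack is positive, so round up by one'' step, which, as computed above, cannot supply the needed improvement.
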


\begin{proof} We may w.l.o.g. assume $\C_X(A)=A$ and $\C_X(B)=B$.
Let $m=|\phi_{X_1}(A)|$ and $n=|\phi_{X_1}(B)|$. Let $A_i=A\cap (\Z
x_1+(i-1)x_2)$, $B_j=(\Z x_1+(j-1)x_2)$, $|A_i|=a_i$ and
$|B_i|=b_i$, for $i=1,\ldots,m$ and $j=1,\ldots,n$. By hypothesis,
we have $a_1\geq 2s-2$ and $m\leq n= 2s-2$. Assume by contradiction
\be\label{onebitmore-cont}|A+B|\leq 2|A|+2|B|-6s+6.\ee

Suppose $m<n=2s-2$. Then, since $||A|-|B||\leq s\leq 2s-2$, from the
proof of Theorem \ref{THEbounds} we know that (\ref{la-lb-bound}) is
minimized for the boundary value $m=n-1$. Hence
$$|A+B|\geq
|A|+|B|-(n+n-1-1)+\frac{n-1}{n-1}|A|+\frac{n-2}{n}|B|=2|A|+2|B|-4s+6-\frac{2}{2s-2}|B|,$$
which together with (\ref{onebitmore-cont}) implies $|B|\geq
s(2s-2)$. Consequently, $|A|+|B|\geq 2|B|-s\geq 2s(2s-2)-s=4s^2-5s$,
contradicting our hypotheses. So we may assume $m=n=2s-2$.

Observe that,  for each $j=1,\ldots,s-1$, we have the following
estimates:
\begin{eqnarray} |A+B|+4s-5&\geq&
\Sum{i=1}{j-1}(a_i+b_1)+\Sum{i=1}{2s-2-j}(a_j+b_i)+\Sum{i=j}{2s-2}(a_i+b_{2s-j-1})+\Sum{i=2s-j}{2s-2}(a_{2s-2}
+b_i)\nonumber \\\label{ff1}
&=&|A|+|B|+(j-1)(a_{2s-2}+b_1)+(2s-2-j)(a_j+b_{2s-j-1}),
\end{eqnarray}
\begin{eqnarray}
|A+B|+4s-5&\geq&
\Sum{i=1}{j-1}(a_1+b_i)+\Sum{i=1}{2s-2-j}(a_i+b_j)+\Sum{i=j}{2s-2}(a_{2s-j-1}+b_{i})+\Sum{i=2s-j}{2s-2}(a_{i}
+b_{2s-2})\nonumber\\\label{ff2}
&=&|A|+|B|+(j-1)(b_{2s-2}+a_1)+(2s-2-j)(b_j+a_{2s-j-1}),
\end{eqnarray}
\begin{eqnarray}
\label{ee1} |A+B|+4s-5&\geq& \Sum{i=1}{2s-
3}(a_i+b_i+a_{i+1}+b_i)+a_{2s-2}+b_{2s-2}=2|A|+2|B|-a_1-b_{2s-2},\\\label{ee2}
|A+B|+4s-5&\geq& \Sum{i=1}{2s-
3}(a_i+b_i+a_{i}+b_{i+1})+a_{2s-2}+b_{2s-2}=2|A|+2|B|-b_1-a_{2s-2}.
\end{eqnarray}
In view of (\ref{onebitmore-cont}) and (\ref{ff1}) with $j=1$, it
follows that $|A|+|B|\geq (2s-3)(a_1+b_{2s-2})+2s-1$. Thus
$|A|+|B|\leq 4s^2-5s-1$ implies that $a_1+b_{2s-2}\leq 2s-1$.
However, in view of (\ref{ee1}) and (\ref{onebitmore-cont}), it
follows that $a_1+b_{2s-2}\geq 2s-1$. Consequently, \be\label{ff3}
a_1+b_{2s-2}=2s-1.\ee Repeating these arguments with (\ref{ff2}) and
(\ref{ee2}) instead, we likewise conclude \be\label{ff4}
b_1+a_{2s-2}= 2s-1.\ee If $a_j+b_{2s-j-1}\geq 2s$, then, in view of
(\ref{ff4}), (\ref{onebitmore-cont}) and (\ref{ff1}), it follows
that $$|A|+|B|\geq j(2s-1)+(2s-2-j)(2s)=4s^2-4s-j\geq 4s^2-5s+1,$$
contradicting that $|A|+|B|\leq 4s^2-5s-1$. Therefore we may assume
\be\label{pluppy} a_j+b_{2s-j-1}\leq 2s-1,\ee for all
$j=1,\ldots,s-1$. Repeating this argument with (\ref{ff2}) and
(\ref{ff3}) instead, we likewise conclude \be\label{pluppy2}
b_j+a_{2s-j-1}\leq 2s-1,\ee for all $j=1,\ldots,s-1$. However,
summing (\ref{pluppy}) and (\ref{pluppy2}) over $j=1,\ldots,s-1$
yields $$|A|+|B|\leq 2(s-1)(2s-1)=4s^2-6s+2,$$ contradicting our
hypotheses, and completing the proof.
\end{proof}

The proof of Theorem \ref{final-result} is by induction on $s$ and
it uses the following version, which is essentially equivalent to
Theorem \ref{final-result}.

\begin{thm}\label{david-part} Let $s\geq 3$ be an integer, and let
$A,\,B\subseteq \mathbb{R}^2$ be finite subsets such that there are
no $s$ collinear points in either $A$ or $B$.

{\rm (i)}  If $||A|-|B||\leq s$ and $|A|+|B|\geq (s-1)(4s-6)+1$,
then
$$|A+B|\geq 2|A|+2|B|-6s+7.$$

{\rm (ii)} If $|A|\geq |B|+s$ and $|B|\geq \frac{1}{2}(s-1)(4s-7)$,
then
$$|A+B|\geq |A|+3|B|-5s+7.$$
\end{thm}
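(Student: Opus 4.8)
The plan is to prove (i) and (ii) simultaneously, by induction on $s$, and for each $s$ by a secondary induction on $|A|+|B|$, the engine being Lemma \ref{lem-stan-reduction}. In part (i) one may assume $|A|\ge|B|$ since both hypothesis and conclusion are symmetric in $A,B$, and in part (ii) this is given; the size hypotheses then force $|B|\ge s$ (for (i): $|A|+|B|\ge(s-1)(4s-6)+1$ together with $|A|-|B|\le s$ gives $|B|\ge 2s^2-\tfrac{11}{2}s+\tfrac{7}{2}\ge s$ for $s\ge 3$; for (ii) it is immediate), so Lemma \ref{lem-stan-reduction} applies.

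The first real step is to exclude alternative (a) of Lemma \ref{lem-stan-reduction}. If $h_1(A,B)\le 2s-3$, then in a suitable direction $A$ is covered by at most $2s-3$ lines and $B$ by at most $2s-3$ lines; since neither set has $s$ collinear points, each such line carries at most $s-1$ points, so $|A|+|B|\le 2(s-1)(2s-3)=(s-1)(4s-6)$. For part (i) this contradicts the size hypothesis outright; for part (ii), $|B|\ge\tfrac12(s-1)(4s-7)$ forces the $B$-count to equal exactly $2s-3$, whence $|A|\le(s-1)(2s-3)<|B|+s$, again a contradiction. So we are always in alternative (b): there are parallel lines and sets $\emptyset\neq A_0\subseteq A$, $B_0\subseteq B$ with $|B_0|\le|A_0|\le s-1$ and $|A'+B'|\le|A+B|-2(|A_0|+|B_0|)$, where $A'=A\setminus A_0$, $B'=B\setminus B_0$.

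Next I would telescope the deletion inequality with the target bounds. These bounds are affine in $|A|,|B|$ and lose at most $2(|A_0|+|B_0|)$ under the deletion: in (i) the target $2|A|+2|B|-6s+7$ drops by exactly $2(|A_0|+|B_0|)$, and in (ii) the target $|A|+3|B|-5s+7$ drops by $|A_0|+3|B_0|\le 2(|A_0|+|B_0|)$ using $|B_0|\le|A_0|$. Hence if $A',B'$ again satisfy the hypotheses of (i) or of (ii) for the same $s$, the secondary induction yields $|A'+B'|\ge\text{target}(A',B')$ and the displayed inequality gives $|A+B|\ge\text{target}(A,B)$. Checking the hypotheses of $A',B'$: from $0\le|A_0|-|B_0|\le s-1$ one gets $-(s-1)<|A'|-|B'|\le|A|-|B|$, so in (i) the condition $||A'|-|B'||\le s$ persists; in (ii) either $|A'|-|B'|\ge s$ persists, or $|A'|-|B'|<s$ and one passes to (i), where a short computation using $|A|-|B|\ge s$ shows $\text{target}_{(i)}(A',B')$ already exceeds what is needed. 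The only genuine obstruction to recursing is the lower size threshold, and it can fail only when $|A|+|B|$ lies within $2(s-1)$ of $(s-1)(4s-6)+1$ (in (i)) or $|B|$ within $s-1$ of $\tfrac12(s-1)(4s-7)$ (in (ii)); these bounded boundary windows are where the content lies.

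The hard part is this boundary window, and I expect it to be the main obstacle. There, alternative (a) having been excluded, one has $h_1(A,B)\ge 2s-2$, and ``no $s$ collinear points'' forces the line-counts $m=|\phi_H(A)|$, $n=|\phi_H(B)|$ in an optimal direction to satisfy $m\ge|A|/(s-1)\ge 2s-3$ and $n\ge|B|/(s-1)\ge 2s-3$, while $\max\{m,n\}=h_1$. I would then split on the size of $h_1$: when $h_1$ is large the crude consequence $m+n\ge h_1+2s-3$ already makes the raw bound \eqref{la-lb-bound} (which is at least $m+n-1$) exceed the target on the finite size range; when $h_1$ is moderate the line structure is nearly full, and one feeds the forced counts $(m,n)\in\{2s-3,2s-2,\dots\}$ into the appropriate clause of Theorem \ref{THEbounds}, verifying on the bounded window that the bound meets $2|A|+2|B|-6s+7$, respectively $|A|+3|B|-5s+7$ — the point being that \eqref{la-lb-bound} is sharpest exactly when the summands are spread over many lines, which is what the hypotheses force here. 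The induction on $s$ then supplies the base case $s=3$, where ``no three collinear points'' alone gives $h_1(A,A)\ge\lceil|A|/2\rceil$ and Theorem \ref{THEbounds} delivers both bounds with room to spare, and it disposes of any residual configuration in which a deletion collapses $B$ onto fewer $(s-1)$-point lines, since then the residual pair has no $s-1$ collinear points. Throughout, the delicate part is the bookkeeping: guaranteeing that after every deletion the reduced pair still falls under exactly one clause for the same $s$ and that the inequalities telescope losslessly, which the affine shape of the targets and $|B_0|\le|A_0|$ are precisely designed to ensure.
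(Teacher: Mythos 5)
The reduction you outline — excluding alternative (a) of Lemma \ref{lem-stan-reduction} by the pigeonhole bound $|A|+|B|\le 2(s-1)(2s-3)$, passing to alternative (b), and telescoping the deletion inequality against the affine target bounds with $|B_0|\le|A_0|$ — is exactly what the paper does. You also correctly identify that the inner induction on $|A|+|B|$ stalls near the size threshold, leaving a bounded window where the real content lies. The gap is in how you propose to close that window.

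Your plan there is to feed the forced line-counts into Theorem \ref{THEbounds}, and in particular you write that the raw bound \eqref{la-lb-bound}, ``which is at least $m+n-1$,'' can be made to ``exceed the target on the finite size range'' once $h_1$ is large. This cannot work: $m+n-1\le |A|+|B|-1$, while the target $2|A|+2|B|-6s+7$ exceeds $|A|+|B|-1$ throughout the regime $|A|+|B|\ge (s-1)(4s-6)+1$, so $m+n-1$ is never competitive. More generally, $h_1$ being large is not favourable for Theorem \ref{THEbounds}: as $m,n$ increase past $\sqrt{|A|}$-scale the factor $\frac{|A|}{m}+\frac{|B|}{n}-1$ decays and the bound degrades toward the trivial $|A|+|B|-1$. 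The pigeonhole lower bounds $m\ge |A|/(s-1)$, $n\ge |B|/(s-1)$ give no upper bound on $m,n$ in any direction, and nothing forces a direction with $m=n$ near $h_1$ to exist, so a direct appeal to Theorem \ref{THEbounds} can be arbitrarily weak in the window.

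What you are missing is the other half of the simultaneous induction. The paper proves Theorem \ref{david-part} at level $s$ using Theorem \ref{final-result} at level $s-1$, not using Theorem \ref{THEbounds} alone. Concretely: the minimal-counterexample argument gives $|A|+|B|\le 4(s-1)^2$; then, since $h_1(A,B)>2s-3\ge s-1$ and the size hypotheses of Theorem \ref{final-result}(i) (or (ii)) at parameter $s-1$ are met, the contrapositive of that theorem forces $|A+B|\ge 2|A|+2|B|-2(s-1)+1-\frac{|A|+|B|}{s-1}$, which combined with the counterexample assumption yields $|A|+|B|\ge (4s-3)(s-1)>4(s-1)^2$, a contradiction. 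That application of the $s-1$ case of the \emph{global} $h_1$-lower-bound theorem — rather than a single-direction line-count bound — is the ingredient that closes the window, and it is absent from your proof. Your base case $s=3$ likewise cannot be verified by Theorem \ref{THEbounds} alone for the same reason; the paper instead grounds the induction at $s=2$ of Theorem \ref{final-result}, where a separate elementary argument is used.
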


We first show that part (ii), in both Theorem \ref{david-part} and
\ref{final-result}, is a very simple consequence of the
corresponding part (i).

\begin{lem}\label{oopsie} Let $s\geq 2$ be a positive integer. (a) If $s\geq 3$ and Theorem
\ref{david-part}(i) holds for $s$, then Theorem \ref{david-part}(ii)
holds for $s$. (b) If Theorem \ref{final-result}(i) holds for $s$,
then Theorem \ref{final-result}(ii) holds for $s$.
\end{lem}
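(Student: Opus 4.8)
The plan is to deduce each part of (ii) from the corresponding part of (i) by modifying one of the two summands so that the modified pair has size difference exactly $s$, while controlling how $|A+B|$ changes under the modification. The only tool needed besides (i) is the following elementary observation: fix a translation-invariant total order on $\R^2$ (for instance the lexicographic order); if a point $x$ exceeds every element of a finite set $X$, then $x+\max Y\notin X+Y$ for any finite nonempty $Y$, so $|(X\cup\{x\})+Y|\ge|X+Y|+1$. Iterating this: if $A'\subseteq A$ is an initial segment (down-set) of $A$ in the fixed order, then $|A+B|\ge|A'+B|+(|A|-|A'|)$; and symmetrically, if $B''\supseteq B$ is obtained by adjoining to $B$ finitely many points, in increasing order, each larger than every element of $A\cup B$, then $|A+B''|\ge|A+B|+(|B''|-|B|)$.

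For part (a): since $|A|\ge|B|+s$, let $A'\subseteq A$ be the initial segment of $A$ with $|A'|=|B|+s$. Then $A'$ has no $s$ collinear points (being a subset of $A$), we have $||A'|-|B||=s$, and $|A'|+|B|=2|B|+s\ge(s-1)(4s-6)+1$, the last inequality being exactly equivalent to $|B|\ge\frac12(s-1)(4s-7)$. Hence Theorem \ref{david-part}(i) applies to the pair $(A',B)$ and yields $|A'+B|\ge2|A'|+2|B|-6s+7=4|B|-4s+7$. Adding back the $|A|-|B|-s$ elements of $A\setminus A'$ gives $|A+B|\ge(4|B|-4s+7)+(|A|-|B|-s)=|A|+3|B|-5s+7$, which is the conclusion of Theorem \ref{david-part}(ii).

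For part (b): argue by contradiction, assuming the size hypotheses of Theorem \ref{final-result}(ii) hold but $h_1(A,B)\ge s$; the goal is to derive $|A+B|\ge|A|+(3-\frac2s)|B|-s$, contradicting (\ref{speel}). Since $|A|\ge|B|+s$, adjoin to $B$ a set of $|A|-s-|B|$ points, each larger than everything in $A\cup B$, obtaining $B''$ with $|B''|=|A|-s$. Using the alternative description of $h_1$ (the least $s$ for which there is a line through the origin whose projection sends both $A$ and $B$ to at most $s$ points), enlarging $B$ to $B''$ cannot decrease $h_1$, so $h_1(A,B'')\ge h_1(A,B)\ge s$. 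Moreover $||A|-|B''||=s$ and $|A|+|B''|=2|A|-s\ge4s^2-6s+3$, because $|A|\ge|B|+s\ge2s^2-\frac52 s+\frac32$. Thus the contrapositive of Theorem \ref{final-result}(i), applied to $(A,B'')$, gives $|A+B''|\ge(2-\frac1s)(2|A|-s)-2s+1=(4-\frac2s)|A|-4s+2$. Subtracting off the $|A|-s-|B|$ adjoined points yields $|A+B|\ge(3-\frac2s)|A|+|B|-3s+2$, and a one-line computation shows this last quantity is at least $|A|+(3-\frac2s)|B|-s$ precisely when $(2-\frac2s)(|A|-|B|)\ge2(s-1)$, i.e. when $|A|-|B|\ge s$, which holds by hypothesis.

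The one genuine point — and the reason the two cases are treated by opposite modifications — is that in (b) the conclusion concerns $h_1(A,B)$, and $h_1$ can strictly decrease when $A$ is replaced by a proper subset; hence the ``shrink $A$'' argument used for (a) would leave no way to recover information about $h_1(A,B)$, and one must instead ``enlarge $B$'', exploiting the monotonicity of $h_1$ under enlarging a summand. Everything else — verifying that the size thresholds in (ii) are exactly calibrated so that the modified pair meets the hypotheses of (i), and the closing arithmetic that reduces to $|A|-|B|\ge s$ — is routine bookkeeping.
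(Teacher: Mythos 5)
Part (a) is correct and essentially the same as the paper's argument: shrink $A$ to $A'$ with $|A'|=|B|+s$ so that $|A+B|\ge|A'+B|+(|A|-|A'|)$, then apply (i). (The paper deletes vertices of the convex hull of $A$; your translation-invariant-order mechanism is a harmless variant and gives the same inequality.)

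Part (b) contains a genuine gap, and it is precisely the point you flag as ``the one genuine point.'' When you enlarge $B$ to $B''$ by adjoining points each larger than everything in $A\cup B$, the inequality you establish is $|A+B''|\ge|A+B|+(|B''|-|B|)$. This gives a \emph{lower} bound on $|A+B''|$ in terms of $|A+B|$, equivalently an \emph{upper} bound $|A+B|\le|A+B''|-(|B''|-|B|)$; it does not let you ``subtract off the adjoined points'' to convert a lower bound on $|A+B''|$ into a lower bound on $|A+B|$. And indeed no such reverse inequality holds: if $A=\{0,100\}$, $B=\{0\}$, $B''=\{0,101\}$, then $|A+B''|=4$ while $|A+B|+(|B''|-|B|)=3$. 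Each new point of $B''$ can contribute up to $|A|$ new sums, so the lower bound on $|A+B''|$ coming from (i) may be entirely swallowed by the adjoined points and say nothing about $|A+B|$. You correctly observe that shrinking $A$ can decrease $h_1(A,B)$, which is exactly why the naive analogue of (a) fails here; but the remedy is not to enlarge $B$. The paper instead shrinks $A$ one vertex at a time to a chain $A=A_0\supseteq A_1\supseteq\cdots\supseteq A_k$ with $|A_k|=|B|+s$ and $|A_i+B|\le|A+B|-|A\setminus A_i|$, observes that $h_1(A_i,B)\ge h_1(A_{i-1},B)-1$ at each step, and then splits into two cases: if $h_1(A_j,B)=s$ occurs at some intermediate step, Theorem \ref{THEbounds}(i)(ii) applied to $(A_j,B)$ already contradicts the assumed upper bound on $|A_j+B|$; otherwise $h_1(A_k,B)\ge s$ persists to the end and Theorem \ref{final-result}(i) applies to $(A_k,B)$. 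Your proof of (b) needs to be replaced by this (or an equivalent) argument.
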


\begin{proof} We first prove (a).
Observe that $|(A\setminus x)+B|<|A+B|$ for any vertex $x$ in the
convex hull of $A$. Thus, by iteratively deleting vertices from the
convex hull, we can obtain a subset $A'\subseteq A$ with
$|A'|=|B|+s$ and \be\label{points-deleted}|A'+B|\leq
|A+B|-|A\setminus A'|.\ee 

Since $|B|\geq \frac{1}{2}(s-1)(4s-7)$, it follows that
$|A'|+|B|=2|B|+s\geq (s-1)(4s-6)+1$, whence we can apply Theorem
\ref{david-part}(i) to $A'+B$. Thus $|A'+B|\geq
2|A'|+2|B|-6s+7=|A'|+3|B|-5s+7$, whence the theorem follows in view
of (\ref{points-deleted}).

Next we prove (b). Suppose by contradiction that $h_1(A,B)\geq s$.
As in the previous part, observe that $|(A\setminus x)+B|<|A+B|$ for
any vertex $x$ in the convex hull of $A$. Thus by iteratively
deleting vertices from the convex hull we can obtain a sequence of
subsets $A_0=A\supseteq A_1\supseteq \ldots \supseteq
A_{|A|-|B|-s}=A_k$, with $|A_i|=|A|-i$
and \be\label{noy}|A_i+B|\leq |A+B|-|A\setminus
A_i|<|A_i|+3|B|-s-\frac{2|B|}{s},\ee where the last inequality
follows from (\ref{speel}).

Since $|A_i|=|A_{i-1}|-1$ and $A_i\subseteq A_{i-1}$, it follows
that $h_1(A_i,B)\geq h_1(A_{i-1},B)-1$ for all $i$. Consequently, if
$h(A_k,B)< s$, then it would follow in view of $h(A,B)\geq s$ that
$h(A_j,B)=s$ for some $j$, whence Theorem \ref{THEbounds}(i)(ii)
would contradict (\ref{noy}) for $i=j$ (note the bound in Theorem
\ref{THEbounds}(i) implies that in Theorem \ref{THEbounds}(ii) in
view of $|A_j|\geq |A_k|= |B|+s$). Therefore we may assume
$h(A_k,B)\geq s$.

Since $|B|\geq 2s^2-\frac{7}{2}s+\frac{3}{2}$, it follow that
$|A_k|+|B|=2|B|+s\geq 4s^2-6s+3$. Hence we can apply Theorem
\ref{final-result}(i) to $A_k+B$, whence $h_1(A_k,B)\geq s$ implies
$$|A_k+B|\geq
2|A_k|+2|B|-2s+1-\frac{|A_k|+|B|}{s}=|A_k|+3|B|-s-\frac{2|B|}{s},$$
contradicting (\ref{noy}) for $i=k$, and completing the proof.
\end{proof}

We will prove Theorems \ref{david-part} and \ref{final-result}
simultaneously using an inductive argument on $s$: the case $s-1$ of
Theorem \ref{final-result} will be used to prove the case $s$ of
Theorem \ref{david-part}, while the case $s$ of Theorem
\ref{david-part} will be used to prove the case $s$ of Theorem
\ref{final-result} (except for the case $s=2$, where a trivial
argument will be used instead). Thus both Theorem \ref{david-part}
and \ref{final-result} follow immediately from the following two
lemmas. This also shows that Theorem \ref{david-part} and Theorem
\ref{final-result} are in some sense equivalent statements.

\begin{lem} Let $s\ge 3$ be a positive integer. Suppose that the statement in Theorem
\ref{final-result} holds for $s-1$. Then Theorem \ref{david-part}
holds for $s$.
\end{lem}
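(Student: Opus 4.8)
The plan is to first deduce part~(ii) from part~(i) by Lemma~\ref{oopsie}(a), and then to prove part~(i) by strong induction on $|A|+|B|$ (for fixed $s$), using Lemma~\ref{lem-stan-reduction} as the reduction step and Theorem~\ref{final-result} for $s-1$ as the anchor. The latter is available by hypothesis, and the part of the argument that invokes it does not call the induction hypothesis, so there is no circularity.

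Let $A,B$ satisfy the hypotheses of Theorem~\ref{david-part}(i). Since these hypotheses and the conclusion are symmetric in $A$ and $B$, we may assume $|A|\geq |B|$; a short computation from $||A|-|B||\leq s$ and $|A|+|B|\geq (s-1)(4s-6)+1$ gives $|B|\geq s$ for $s\geq 3$, so Lemma~\ref{lem-stan-reduction} applies. Its alternative~(a), $h_1(A,B)\leq 2s-3$, is impossible: since neither $A$ nor $B$ has $s$ collinear points, any direction covering $A$ (respectively $B$) by $t$ parallel lines forces $|A|\leq (s-1)t$ (respectively $|B|\leq (s-1)t$), so (a) would give $|A|,|B|\leq (s-1)(2s-3)$ and hence $|A|+|B|\leq (s-1)(4s-6)$, contradicting the hypothesis. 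Therefore alternative~(b) holds, yielding $A_0\subseteq A$ and $B_0\subseteq B$ lying on two parallel lines with $1\leq |B_0|\leq |A_0|\leq s-1$ and, writing $A'=A\setminus A_0$ and $B'=B\setminus B_0$, $|A'+B'|\leq |A+B|-2(|A_0|+|B_0|)$. Then $A'$ and $B'$ still have no $s$ collinear points, $||A'|-|B'||\leq s$, and $|A'|+|B'|\geq (s-1)(4s-6)+1-(2s-2)=(2s-3)^2$.

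Now split into two regimes. If $|A'|+|B'|\geq (s-1)(4s-6)+1$, then after relabeling so the larger set comes first, $(A',B')$ satisfies all hypotheses of Theorem~\ref{david-part}(i) with strictly smaller cardinality sum, so by the induction hypothesis $|A'+B'|\geq 2|A'|+2|B'|-6s+7$; adding $2(|A_0|+|B_0|)$ to both sides and using $|A|=|A'|+|A_0|$, $|B|=|B'|+|B_0|$ yields the desired bound. If instead $|A'|+|B'|<(s-1)(4s-6)+1$, then for $s\geq 3$ one checks $4(s-1)^2-6(s-1)+3\leq (2s-3)^2\leq |A'|+|B'|<(s-1)(4s-6)+1\leq 4(s-1)^2$, so $A'$ and $B'$ fall within the range to which Theorem~\ref{final-result} for $s-1$ applies. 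Moreover $h_1(A',B')\geq s-1$: since (a) fails for $(A,B)$ we have $h_1(A,B)\geq 2s-2$, and deleting the at most $s-1$ points of $A_0$ (respectively $B_0$) lowers, in any fixed direction, the number of lines meeting $A$ (respectively $B$) by at most $s-1$, so in every direction the larger of the two line-counts remains $\geq (2s-2)-(s-1)=s-1$. Hence, when $||A'|-|B'||\leq s-1$, the contrapositive of Theorem~\ref{final-result}(i) for $s-1$ gives $|A'+B'|\geq (2-\frac{1}{s-1})(|A'|+|B'|)-2(s-1)+1$, which is at least $2|A'|+2|B'|-6s+7$ since $|A'|+|B'|\leq 4(s-1)^2$; adding back $2(|A_0|+|B_0|)$ finishes this case. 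In the boundary subcase $||A'|-|B'||=s$, which forces $|A'|\geq |B'|+s$ and, via $|A'|+|B'|\geq (2s-3)^2$, also $|B'|\geq 2(s-1)^2-\frac{7}{2}(s-1)+\frac{3}{2}$ for $s\geq 3$, the same argument with part~(ii) of Theorem~\ref{final-result} for $s-1$ gives $|A'+B'|\geq |A'|+(3-\frac{2}{s-1})|B'|-(s-1)$, and this is at least $2|A'|+2|B'|-6s+7$ because $|B'|\leq (2s-3)(s-1)$ here.

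The main obstacle is less any single inequality than making the two-regime induction close numerically. Ruling out alternative~(a) of Lemma~\ref{lem-stan-reduction} is easy. The delicate point is that, once a deletion drops $|A'|+|B'|$ below $(s-1)(4s-6)+1$, the sum must still land in the window $[\,4(s-1)^2-6(s-1)+3,\ 4(s-1)^2\,]$: this is precisely the range where Theorem~\ref{final-result}(i) for $s-1$ applies (at the lower endpoint) and where its conclusion is still strong enough to recover $2|A'|+2|B'|-6s+7$ (at the upper endpoint, at which the bound $(2-\frac{1}{s-1})(|A'|+|B'|)-2(s-1)+1$ equals $2|A'|+2|B'|-6s+7$ exactly). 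Confirming $h_1(A',B')\geq s-1$ after the deletion and handling the case $||A'|-|B'||=s$ via part~(ii) are the remaining points that require care.
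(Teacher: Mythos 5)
Your proof is correct and uses the same key ingredients as the paper---Lemma~\ref{oopsie}(a) to reduce to part~(i), Lemma~\ref{lem-stan-reduction} as the deletion step, a minimal counterexample/strong induction on $|A|+|B|$, and Theorem~\ref{final-result} at level $s-1$---but the logical organization is a genuinely different variant. The paper takes a minimal counterexample $(A,B)$, uses minimality to deduce $|A'|+|B'|\le (s-1)(4s-6)$ and hence $|A|+|B|\le 4(s-1)^2$, and then applies Theorem~\ref{final-result} at level $s-1$ to the \emph{original} pair $(A,B)$, for which $h_1(A,B)>2s-3\ge s-1$ is already in hand from the pigeonhole step; this yields $|A|+|B|>4(s-1)^2$, a contradiction. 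You instead apply the level-$(s-1)$ theorem to the \emph{reduced} pair $(A',B')$ whenever $|A'|+|B'|$ drops below the induction threshold, which forces you to verify two extra facts: that $h_1(A',B')\ge s-1$ (your argument that removing $\le s-1$ points lowers each directional line-count by at most $s-1$ is correct), and that $|A'|+|B'|$ lands in the window $[4(s-1)^2-6(s-1)+3,\ 4(s-1)^2]$ where the level-$(s-1)$ conclusion both applies and is strong enough. Both checks go through, including the boundary subcase $|A'|-|B'|=s$ handled via Theorem~\ref{final-result}(ii), where you also correctly confirm $|B'|\ge 2(s-1)^2-\tfrac{7}{2}(s-1)+\tfrac32$ and $|B'|\le (2s-3)(s-1)$. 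The paper's route is somewhat cleaner because all the hypotheses of the level-$(s-1)$ theorem are automatically satisfied by $(A,B)$, so no fresh $h_1$ estimate or window check for the deleted pair is needed, but the two arguments are of comparable length and difficulty.
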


\begin{proof} In view of Lemma \ref{oopsie}, it suffices to show part
(i) holds, so suppose on the contrary that Theorem
\ref{david-part}(i) is false for $s$.  Let $A,\,B\subseteq \R^2$ be
a counterexample with $|A|+|B|$ minimum. Thus  $||A|-|B||\leq s$,
$|A|+|B|\geq (s-1)(4s-6)+1$ and
    \be\label{toosmall}
    |A+B|<2|A|+2|B|-6s+7.
    \ee
We may assume $|A|\geq |B|$.

Since neither $A$ nor $B$ contains $s$ collinear points, and since
$|A|+|B|\geq (s-1)(4s-6)+1$, it follows from the pigeonhole
principle that $h_1(A,B)>2s-3$.  By Lemma \ref{lem-stan-reduction}
(in view of (\ref{toosmall})), there is a nonempty subset
$A_0\subseteq A$ and $B_0\subseteq B$ with $ |B_0|\le |A_0|\le s-1$
and
    \be\label{oneextra}
    |A'+B'|\leq |A+B|-2(|A_0|+|B_0|)<2|A'|+2|B'|-6s+7,
    \ee
where $A'=A\setminus A_0$ and $B'=B\setminus B_0$. Furthermore,
$||A'|-|B'||=||A|-|B|-(|A_0|-|B_0|)|\leq s$. Therefore, by the
minimality of $|A|+|B|$, we have
    $$
    |A'|+|B'|\leq (s-1)(4s-6).
    $$
As a result,
    \be\label{umum2}
    |A|+|B|\leq |A'|+(s-1)+|B'|+(s-1)\le (s-1)(4s-6)+2(s-1)=4(s-1)^2.
    \ee
If $|A|<|B|+s$, then, since $h_1(A,B)> 2s-3\geq s-1$ and since
    $$
    |A|+|B|\geq (s-1)(4s-6)+1>(s-1)(4s-9)+3=4(s-1)^2-5(s-1)+3,
    $$
it follows, in view of (\ref{toosmall}) and the case $s-1$ of
Theorem \ref{final-result}(i), that
   $$
    2|A|+2|B|-2(s-1)+1-\frac{|A|+|B|}{s-1}\leq |A+B|\leq 2|A|+2|B|-6s+6.
    $$
Hence $|A|+|B|\geq (4s-3)(s-1)>4(s-1)^2$, contradicting
(\ref{umum2}). On the other hand, if $|A|=|B|+s$, then, since
$h_1(A,B)> 2s-3\geq s-1$ and since
    \ber\nn
    2|B|+s&=&|A|+|B|\geq (s-1)(4s-6)+1=4s^2-10s+7\\ &\geq&\nn 4s^2-14s+14= 4(s-1)^2-7(s-1)+3+s,
    \eer
it follows, in view of (\ref{toosmall}) and the case $s-1$ of
Theorem \ref{final-result}(ii), that
    $$
    2|A|+2|B|-2s+1-\frac{|A|+|B|-s}{s-1}=|A|+3|B|-(s-1)-\frac{2|B|}{s-1}\leq |A+B|\leq 2|A|+2|B|-6s+6.
    $$
Hence $|A|+|B|\geq (4s-5)(s-1)+s=4s^2-8s+5>4(s-1)^2$, contradicting
(\ref{umum2}), and completing the proof.
\end{proof}

\begin{lem} Let $s\geq 2$ be a positive integer. If $s\geq 3$, suppose that the
statement of Theorem \ref{david-part} holds for $s$. Then Theorem
\ref{final-result} holds for $s$.
\end{lem}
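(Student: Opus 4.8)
The plan is to first invoke Lemma~\ref{oopsie}(b), which reduces the task to establishing part~(i) of Theorem~\ref{final-result} for $s$, and then to argue by the contrapositive: assuming $h_1(A,B)\ge s$, $||A|-|B||\le s$ and $|A|+|B|\ge 4s^2-6s+3$, I want to derive $|A+B|\ge (2-\frac1s)(|A|+|B|)-2s+1$. The case $s=2$ has to be handled separately, since there the hypothesis on Theorem~\ref{david-part} is vacuous; for it I would give a short direct argument, choosing a direction in which one of $A,B$ lies on a single line whenever possible (and otherwise a direction realizing $h_1(A,B)$), applying Theorem~\ref{THEbounds}(i) or its transpose (roles of $A$ and $B$ reversed), and checking the resulting inequality against $|A|+|B|\ge 7$.

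For $s\ge 3$, I would suppose for contradiction that $|A+B|<(2-\frac1s)(|A|+|B|)-2s+1$, and split into three cases. First, if $h_1(A,B)=s$, choose a direction in which $A$ and $B$ are covered by $m$ and $n$ parallel lines with $\max\{m,n\}=s$; since $||A|-|B||\le s$, Theorem~\ref{THEbounds}(i) (or its transpose) with this parameter gives $|A+B|\ge (2-\frac1s)(|A|+|B|)-2s+1$ directly, a contradiction. Second, if $h_1(A,B)\ge s+1$ but neither $A$ nor $B$ contains $s$ collinear points, then Theorem~\ref{david-part}(i) applies --- its hypothesis $|A|+|B|\ge (s-1)(4s-6)+1$ being implied by $|A|+|B|\ge 4s^2-6s+3$ --- so $|A+B|\ge 2|A|+2|B|-6s+7$; and since $|A|+|B|\ge 4s^2-6s+3>4s^2-6s$, an elementary check shows $2|A|+2|B|-6s+7\ge (2-\frac1s)(|A|+|B|)-2s+1$, again a contradiction.

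The third and main case is $h_1(A,B)\ge s+1$ together with, say, $A$ containing $s$ collinear points on a line $\ell$. Let $d$ be the direction of $\ell$ and let $m,n$ be the numbers of lines parallel to $\ell$ meeting $A$ and $B$ respectively; then $\max\{m,n\}\ge s+1$, while some line parallel to $\ell$ meets $A$ in at least $s$ points. The strategy is to estimate $|A+B|$ from Theorem~\ref{THEbounds} in the direction $d$: as long as the larger of $m,n$ is at most $\lfloor\frac{|A|+|B|}{2s}\rfloor$ (which is $\ge 2s-3$ here), the plain bound $(2-\frac1m)(|A|+|B|)-2m+1$, or its transpose in $n$, already beats the target; when the larger of $m,n$ exceeds this threshold --- so that $A$ or $B$ is spread over many parallel lines --- I would instead use the relative-size-sensitive bounds of Theorem~\ref{THEbounds}(iii),(iv), which take the large collinear block into account and do not degrade, and show that the only configurations in which even these fail force $|\phi_{X_1}(A)|\le|\phi_{X_1}(B)|=2s-2$ (or the transpose), at which point Lemma~\ref{the-monstruous-lemma} supplies the extra $+1$ needed to contradict the assumed bound in the narrow range $4s^2-6s+3\le|A|+|B|\le 4s^2-5s-1$.

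The main obstacle is exactly this last case: the elementary estimate drops below $(2-\frac1s)(|A|+|B|)-2s+1$ as soon as the number of covering lines passes $\frac{|A|+|B|}{2s}$, so one genuinely has to exploit the large collinear subset of $A$, and the constants must be tracked carefully against both thresholds $4s^2-6s+3$ and $4s^2-5s-1$; the remaining bookkeeping --- distinguishing $m\ge s+1$ from $m\le s<n$, and whether the large collinear block is an arithmetic progression --- is then a routine but lengthy sequence of discrete-calculus minimizations, entirely in the spirit of the boundary analysis already performed in Lemmas~\ref{lem-stan-reduction} and~\ref{the-monstruous-lemma}.
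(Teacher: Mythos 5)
Your overall skeleton matches the paper: reduce to part (i) via Lemma~\ref{oopsie}(b); if neither $A$ nor $B$ has $s$ collinear points, apply Theorem~\ref{david-part}(i) and compare $2|A|+2|B|-6s+7$ against the target (your arithmetic check $|A|+|B|>4s^2-6s$ is correct); then assume $A$ has $s$ collinear points, compress, and analyze the direction of that line. Your extra case split on $h_1(A,B)=s$ versus $h_1(A,B)>s$ is harmless but redundant, since the threshold argument already shows that $\max\{m,n\}$ in the compressed direction must strictly exceed $\lfloor(|A|+|B|)/2s\rfloor$. Your observation that the paper's case-by-case argument covers $s=2$ uniformly (the no-$s$-collinear subcase being vacuous) would make a separate $s=2$ treatment unnecessary, but that is a cosmetic point.

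The genuine gap is in the main case. You propose to finish with the ``relative-size-sensitive bounds'' Theorem~\ref{THEbounds}(iii),(iv), but these, like (\ref{la-lb-bound}) itself, are \emph{single-direction} estimates: they depend only on the pair $(m,n)$ of line counts in one direction, via $\bigl(\frac{|A|}{m}+\frac{|B|}{n}-1\bigr)(m+n-1)$. They cannot, by themselves, reach the bound $(2-\frac1s)(|A|+|B|)-2s+1$ once $\max\{m,n\}$ passes the threshold $\lfloor(|A|+|B|)/2s\rfloor$, because that bound genuinely degrades there. What the paper actually needs --- and what you are missing --- are \emph{cross-directional} estimates on the compressed sets, such as (\ref{once2}) and (\ref{(l+1)/2-bound}), which combine the number of rows $n$ of $B$ with the length $|A_1|$ of the longest row of $A$ (or vice versa) into bounds like $|A+B|\geq|A|+|B|-1+(n-1)(|A_1|-1)$. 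No instance of Theorem~\ref{THEbounds} gives this, since $n$ and $|A_1|$ belong to orthogonal directions. Moreover, once compression is performed, the argument must establish \emph{two} threshold inequalities simultaneously, $\max\{m,n\}\geq x$ \emph{and} $\max\{|A_1|,|B_1|\}\geq x$ (the paper's (\ref{sbig}) and (\ref{sbigdual})), and then split according to whether the two maxima sit in the same summand (Case~B) or in opposite summands (Case~A). Your proposal mentions only the covering-direction count $\max\{m,n\}$; without the dual inequality and the Case~A/Case~B dichotomy (which additionally tracks $\alpha=(|A|+|B|)\bmod 2s$ and invokes Lemma~\ref{the-monstruous-lemma} only in the narrow subcase $\max\{m,n\}=x=2s-2$, $\alpha<s$), the ``routine but lengthy discrete-calculus'' bookkeeping you defer to cannot be carried out: the needed inequalities are simply not consequences of (\ref{la-lb-bound}) alone.
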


\begin{proof}  In view of Lemma \ref{oopsie}, it suffices to show part
(i) holds. Let $A,\,B\subseteq \R^2$ verify the hypothesis of
Theorem \ref{final-result}(i) for $s$, and assume by contradiction
that $h_1(A,B)\geq s$.

Suppose neither $A$ nor $B$ contain $s$ collinear points. Thus
$|A|+|B|\geq 3$ implies that $s\geq 3$. Hence, in view of Theorem
\ref{david-part}(i) and (\ref{sim-assumption}), it follows that
    $$
    2|A|+2|B|-6s+7\leq |A+B|<2|A|+2|B|-2s+1-\frac{|A|+|B|}{s}.
    $$
Thus $|A|+|B|<4s^2-6s$, contradicting that $|A|+|B|\geq 4s^2-6s+3$.
So we may assume w.l.o.g. that $A$ contains at least $s$ collinear
points on the line $\Z x_1+a_1$. Let $X=(x_1,x_2)$ be an ordered
basis for $\R^2$
%

Since $h_1(A,B)\geq s$, so that
$\max\{|\phi_{X_1}(A)|,\,|\phi_{X_1}(B)|\}\geq s$, it follows in
view of (\ref{compression-doesn't-affect-param}) that
$\max\{|\phi_{X_1}(\C_X(A))|,\,|\phi_{X_1}(\C_X(B))|\}\geq s.$
Hence, since $A$ contains $s$ collinear points on a line parallel to
$\Z x_1$, it follows that $h_1(\C_X(A),\C_X(B))\geq s$.
Consequently, we conclude from
(\ref{well-align-lowerbound-fullycompressed}) that it suffices to
prove the theorem on compressed sets, and w.l.o.g. we assume
$A=\C_X(A)$ and $B=\C_X(B)$. Let $|\phi_{X_1}(A)|=m$ and
$|\phi_{X_1}(B)|=n$. Let $A_i=A\cap (\Z x_1+(i-1)x_2)$, $1\le i\le
m$, and $B_i=B\cap (\Z x_1+(i-1)x_2)$, $1\le i\le n$. Note, since
both $A$ and $B$ are compressed, that $|A_1|\geq |A_2|\geq
\ldots\geq |A_{m}|$ and $|B_1|\geq |B_2|\geq \ldots \geq |B_{n}|$.
Since $A$ contains $s$ collinear points along a line parallel to $\Z
x_1$, it follows that $|A_1|\geq s$.

By our assumption to the contrary, we have $\max\{m,\,n\}\geq s$.
Thus it follows, from Theorem \ref{THEbounds}(i) (applied with the
line $\Z x_1$) and (\ref{sim-assumption}), that
    \be\label{sbig}
    \max\{m,\,n\}\geq \left\lfloor\frac{|A|+|B|}{2s}\right\rfloor+1.
    \ee
Since $\max\{|A_1|,\,|B_1|\}\geq s$, it follows, from Theorem
\ref{THEbounds}(i) (applied with the line $\Z x_2$) and
(\ref{sim-assumption}), that
    \be\label{sbigdual}
    \max\{|A_1|,\,|B_1|\}\geq \left\lfloor\frac{|A|+|B|}{2s}\right\rfloor+1.
    \ee

 Let $k=|A|+|B|$, and let
    $$
    x=\left\lfloor\frac{|A|+|B|}{2s}\right\rfloor+1=\frac{|A|+|B|-\alpha}{2s}+1,
    $$
so that $k=|A|+|B|\equiv \alpha \mod 2s$, with $0\leq \alpha\leq
2s-1$. With this notation, (\ref{sim-assumption}) yields
    \be\label{a+b-upper-2x}
    |A+B|\leq 2(k-s-x+1)-\delta,
    \ee
where $\delta=0$ if $\alpha<s$ and otherwise $\delta=1$.

We proceed to show that \be|A+B|<
k-(2x-2)+\frac{x-2}{x}|A|+|B|.\label{simpler}\ee Suppose
(\ref{simpler}) does not hold.  In this case, if $\delta=0$, then
$\alpha\leq s-1$ whence from (\ref{a+b-upper-2x}) we conclude that
    $$
    |A|\geq sx=s(\frac{|A|+|B|-\alpha}{2s}+1)\geq s(\frac{2|A|-s-\alpha}{2s}+1)\geq
    s(\frac{2|A|-2s+1}{2s}+1)>|A|,
    $$
a contradiction. On the other hand, if $\delta=1$, then from
(\ref{a+b-upper-2x}) we instead conclude that
    $$
    2|A|\geq(2s+1)x\geq (2s+1)\frac{|A|+|B|+1}{2s}\geq
    (2s+1)\frac{2|A|-s+1}{2s},$$
whence
    \be\label{nunce2}
    |A|\leq s^2-\frac{s}{2}-\frac{1}{2}.
    \ee
However, since $2|A|+s\geq |A|+|B|\geq 4s^2-6s+3$, it follows that
$|A|\geq \lceil 2s^2-\frac{7}{2}s+\frac{3}{2}\rceil$, which
contradicts (\ref{nunce2}). Thus we conclude that (\ref{simpler})
holds.

For each $r\in \{1,\ldots,n\}$, we have the estimate
    \begin{eqnarray}\label{donce2}
    |A+B|&\geq&|A_1+\bigcup_{i=1}^{r-1}B_i|+|A+B_{r}|+|A_{m}+\bigcup_{i=r+1}^{n}B_i|\nonumber\\
        &=&\Sum{i=1}{r-1}|B_i|+(r-1)(|A_1|-1)+|A|+m(|B_r|-1)+\Sum{i=r+1}{n}|B_i|+(n-r)(|A_{m}|-1)\nonumber\\
        &\ge& |A|+|B|-1+(|A_1|-1)(r-1)+(m-1)(|B_r|-1).
    \end{eqnarray}
Averaging this estimate over all $r$, we obtain
    \be\label{(l+1)/2-bound}
    |A+B|\geq |A|+|B|-1+(|A_1|-1)(\frac{n+1}{2}-1)+(m-1)(\frac{|B|}{n}-1).
    \ee

In view of (\ref{sbig}) and (\ref{sbigdual}), we have
$\max\{m,\,n\}\ge x$ and  $\max\{|A_1|,\,|B_1|\}\ge x$. We consider
two cases according to whether these maxima are achieved in the same
set or in different sets.

 \textbf{Case A:} Either $\min\{ m,\,|B_1|\}\geq x$ or
$\min\{ n,\,|A_1|\}\geq x$. By symmetry we may assume  that the
latter holds. We have the estimate
    \begin{eqnarray}\label{once2}
    |A+B|&\geq& |A_1+(B\setminus B_{n})|+|A+B_{n}|\nonumber\\
    &=&|B|-|B_{n}|+(n-1)(|A_1|-1)+|A|+m(|B_{n}|-1)\nonumber\\
    &\ge&|A|+|B|-1+(n-1)(|A_1|-1)\nonumber\\
    &\geq& |A|+|B|-1+(x-1)^2.
\end{eqnarray}
In view of (\ref{a+b-upper-2x}) and (\ref{once2}), it follows that
    $$
    k\geq x^2+2s-2+\delta=\frac{k^2-2\alpha k+\alpha^2}{4s^2}+\frac{k-\alpha}{s}+2s-1+\delta.
    $$
Hence,
    $$
    k^2-2(2s^2-2s+\alpha)k+(8s^3-4s^2+4\delta s^2-4\alpha s+\alpha^2)\leq 0.
    $$
Thus, since $\alpha-\delta\leq 2s-2$, it follows that
    $$
    k\leq 2s^2-2s+\alpha+2s\sqrt{s^2-4s+2+\alpha-\delta}< 4s^2-4s+\alpha.
    $$
Since $|A|+|B|\equiv \alpha\mod 2s$, the above bound implies that
\be\label{verbilityX}|A|+|B|=k\leq 4s^2-6s+\alpha\leq 4s^2-4s-1.\ee
Hence, since $k\geq 4s^2-6s+3$, it follows that $k=4s^2-6s+\alpha$,
with $\alpha\geq 3$ and $x=2s-2$.

Suppose  $\max\{m,n\}=x$. If $\alpha<s$, then Lemma
\ref{the-monstruous-lemma} contradicts (\ref{a+b-upper-2x}).
Therefore $\alpha\geq s$ and $\delta =1$. Hence Theorem
\ref{THEbounds}(i) and (\ref{a+b-upper-2x}) imply that
\be\label{sneekay}2k-2x-2s+1\geq
2k-2x+1-\left\lfloor\frac{k}{x}\right\rfloor=2k-2x+1-(2s-1),\ee a
contradiction. So we may assume $\max\{m,n\}>x$.

Suppose $n\geq x+1$. Hence (\ref{once2}) now implies that $|A+B|\geq
|A|+|B|-1+x(x-1)$, which, when combined with (\ref{a+b-upper-2x})
and $x=2s-2$, yields $k\geq 4s^2-4s-1+\delta$, contradicting
(\ref{verbilityX}). So we can assume $n=x$ and $m>x$. By this same
argument, we also conclude that $|A_1|=x$.

If $|B_1|\geq x$, then interchanging the roles of $A$ and $B$ and
repeating the above argument completes the proof. Therefore
$|B_1|\leq x-1$. Since $|A_1|=x$, we can apply (\ref{la-lb-bound})
with the line $\Z x_2$ to obtain
$$|A+B|\geq \left( \frac{|A|}{x}+\frac{|B|}{|B_1|}-1\right)(x+|B_1|-1)=
k-(x+|B_1|-1)+\frac{|B_1|-1}{x}|A|+\frac{x-1}{|B_1|}|B|.$$
Considering this bound as a function of $|B_1|$, it follows by the
same calculation used in the proof of Theorem \ref{THEbounds}, and
in view of $|B_1|<x$ and $||A|-|B||\leq s\leq 2s-2=x$, that it is
minimized when $|B_1|=x-1$, contradicting (\ref{simpler}), and
completing the case.

\textbf{Case B:} Either $\min\{ m,\,|A_1|\}\geq x$ or $\min\{
n,\,|B_1|\}\geq x$. By symmetry we may assume that the former holds.
Note that we can assume $|B_1|<x$ and $n<x$, else the previous case
completes the proof.

If $m=x$, then, in view of $n\leq x-1$ and $||A|-|B||\leq s\leq
x=m$, it follows that the bound given by (\ref{la-lb-bound}),
considered as a function of $n$, is minimized for the boundary value
$n=x-1$, contradicting (\ref{simpler}). Therefore we may assume
$m>x$. Applying the same arguments with the roles of $x_1$ and $x_2$
swapped, we also conclude that $|A_1|>x$. Thus (\ref{(l+1)/2-bound})
implies that
    $$
    |A+B|\geq |A|+|B|-1+\frac{1}{2}x(n+1+\frac{2|B|}{n})-2x\geq
    k-1+x(\sqrt{2|B|}+\frac{1}{2})-2x.
    $$
Hence in view of (\ref{a+b-upper-2x}), it follows that
    \be\label{fashizzle}
    x(\sqrt{2|B|}+\frac{1}{2})\leq k-\delta-2s+3,
    \ee
and consequently,
$$(\frac{k-2s+1}{2s}+1)(\sqrt{2|B|}+\frac{1}{2})\leq k-2s+3.$$
Thus $\sqrt{2|B|}+\frac{1}{2}< 2s$, implying that $|B|\leq 2s^2-s$,
whence $|A|+|B|\leq 4s^2-s$. As a result,
\be\label{datstuff}x=\left\{
                                                \begin{array}{ll}
                                                  2s, & 4s^2-2s\leq k\leq 4s^2-s \\
                                                  2s-1, & 4s^2-4s\leq k\leq 4s^2-2s-1 \\
                                                  2s-2, & 4s^2-6s+3\leq k\leq 4s^2-4s-1.
                                                \end{array}
                                              \right.\ee
There are three cases based on the value of $x$.

If $x=2s$, then (\ref{datstuff}) implies that $k-\delta\leq
4s^2-s-1$, whence (\ref{fashizzle}) implies $$k\leq 2|B|+s\leq
(2s-2+\frac{1}{s})^2+s\leq 4s^2-7s+8,$$ contradicting that $k\geq
4s^2-2s$.

If $x=2s-1$, then (\ref{datstuff}) implies that $k-\delta\leq
4s^2-2s-2$, whence (\ref{fashizzle}) implies that $$k\leq 2|B|+s\leq
\lfloor(2s-\frac{3}{2})^2+s\rfloor\leq 4s^2-5s+2.$$ Hence $k\geq
4s^2-4s$ implies that $s=2$, whence the above inequality becomes
$k\leq 4s^2-5s+2=8$. Thus (\ref{fashizzle}) then implies that $k\leq
2|B|+s\leq (\frac{7}{3}-\frac{1}{2})^2+2\leq 6$, contradicting that
$k\geq 4s^2-4s=8$.

Finally, if $x=2s-2$, then (\ref{datstuff}) implies that
$k-\delta\leq 4s^2-4s-2$, whence (\ref{fashizzle}) implies
$$k\leq 2|B|+s\leq \lfloor(2s-\frac{3}{2}-\frac{1}{2s-2})^2+s\rfloor\leq 4s^2-5s.$$
However, $k\leq 4s^2-5s$ and (\ref{fashizzle})
imply that $k\leq 2|B|+s\leq (2s-2)^2+s=4s^2-7s+4$, contradicting
that $k\geq 4s^2-6s+3$, and completing the proof.
\end{proof}

Finally, we conclude with the proof of Theorem \ref{Thm-AisBig}.

\begin{proof}{\it of Theorem \ref{Thm-AisBig}.}
If $s=1$, then the result follows from Theorem \ref{CDT-for-Z}. If
$s=2$, then $|A|>|B|$, and the result follows from \cite[Corollary
5.16 with $n=|A|$, $t=|A|-|B|\geq 1$, $d=2$]{taobook}. So we may
assume $s\geq 3$. If $|B|=1$, the result is trivial. So $|B|\geq 2$.
By hypothesis,  \be\label{ideal-A-bound} |A|\geq
\frac{1}{2}s(s-1)|B|+s.\ee

Let $X=(x_1,x_2)$ be an arbitrary ordered basis for $\R^2$, where
$\R x_1=Z_1$ and $\R x_2=Z_2$. Let $m=|\phi_{Z_1}(A)|$ and
$n=|\phi_{Z_1}(B)|$. Note $\max\{m,\,n\}\geq s$ by hypothesis.

Suppose $m<s$. Then $n\geq s>m$ with $|B|<|A|$, whence Theorem
\ref{THEbounds}(i) implies that \be\label{noia}|A+B|\geq
2|A|+2|B|-2n+1-\frac{|A|+|B|}{n}.\ee Note (\ref{ideal-A-bound}) and
$s\geq 3$ imply $|A|\geq 3|B|+s$ so that $2\leq s\leq n\leq |B|\leq
\frac{|A|+|B|}{4}$. As a result, (\ref{noia}) and
(\ref{ideal-A-bound}) yield \ber\nn|A+B|&\geq&
2|A|+2|B|-3-\frac{|A|+|B|}{2}\geq
|A|+\frac{\frac{1}{2}s(s-1)|B|+s}{2}+\frac{3}{2}|B|-3 \\&=&
|A|+(\frac{1}{4}s^2-\frac{1}{4}s+\frac{3}{2})|B|+\frac{s}{2}-3\geq
|A|+(\frac{1}{4}s^2-\frac{1}{4}s+\frac{3}{2})|B|-s\geq
|A|+s|B|-s,\nn \eer as desired. So we may assume
$|\phi_{Z_1}(A)|=m\geq s$. Moreover, if $m=s$, then
(\ref{Goool-Aisbig}) follows in view of Theorem \ref{THEbounds}(iii)
and (\ref{ideal-A-bound}). Therefore $|\phi_{Z_1}(A)|=m>s$.  Since
$X$ was arbitrary, this means that $|\phi_Z(A)|>s$ for any
one-dimensional subspace $Z$. In particular, by letting $Z$ be a
line such that $|\phi_Z(B)|<|B|$ (recall $|B|\geq 2$), we conclude
 that $|A+B|\geq |A|+|\phi_Z(A)|\geq
|A|+s$. Thus we may assume $|B|\geq 3$, else the proof is complete.

If $n=1$, then (\ref{Goool-Aisbig}) follows from (\ref{la-lb-bound})
and $m>s$. Therefore, as $X$ is arbitrary, it follows that $n\geq 2$
and that $|\phi_Z(B)|\geq 2$ for any one-dimensional subspace $Z$.

Now assume to the contrary that (\ref{Goool-Aisbig}) is false. We
will throughout the course of the proof find that the following
bound holds for varying values of $n'\geq 1$:
\be\label{Bound-unmanipulatedform}
|A|+|B|-m-n'+1+\frac{n'-1}{m}|A|+\frac{m-1}{n'}|B|\leq |A+B|\leq
|A|+s|B|-s-1.\ee Inequality (\ref{la-lb-bound}) shows that the lower
bound above holds with $n'=n$. Rearranging the terms in
(\ref{Bound-unmanipulatedform}), we obtain
\be\label{Bound-m-quadform-original}
(\frac{|B|}{n'}-1)m^2-(s|B|-|B|+\frac{|B|}{n'}+n'-s-2)m+(n'-1)|A|\leq
0.\ee Applying the estimate (\ref{ideal-A-bound}) yields
\be\label{bound-oops}(\frac{|B|}{n'}-1)m^2-(s|B|-|B|+\frac{|B|}{n'}+n'-s-2)m+
(n'-1)(\frac{1}{2}s(s-1)|B|+s)\leq 0.\ee  When $|B|>n'$, the
discriminant of the above quadratic in $m$ must be nonnegative,
i.e.,
\be\label{the}(s|B|-|B|+M-s-2)^2-2(|B|+1-M)(s^2|B|-s|B|+2s)\geq
0,\ee where $M:=\frac{|B|}{n'}+n'$. Collecting terms, we obtain
\be\label{duhdumm}M^2+(2s^2|B|+2s-2|B|-4)M+4+4|B|-4s^2|B|+|B|^2-s^2|B|^2-4s|B|+s^2\geq
0.\ee Noting that $(2s^2|B|+2s-2|B|-4)\geq 0$, we conclude that
(\ref{duhdumm}) must hold for the maximum allowed value for $M$.

\setcounter{claim}{0}

\begin{claim}\label{n-not-2} (\ref{Bound-unmanipulatedform}) cannot hold with
$n'=2$; consequently, $|\phi_Z(B)|\geq 3$ for any one-dimensional
subspace $Z$.\end{claim}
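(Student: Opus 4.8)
The plan is to assume that (\ref{Bound-unmanipulatedform}) holds with $n'=2$ and to derive a contradiction from whichever of (a), (b) holds. Specializing (\ref{Bound-m-quadform-original}) to $n'=2$ turns it into an inequality of the form $\alpha m^{2}-\beta m+|A|\le 0$, where $\alpha:=\frac{|B|}{2}-1$ (positive, since $|B|\ge 3$), $\beta:=(s-\frac{1}{2})|B|-s$, and $m=|\phi_{Z_{1}}(A)|$ is the quantity already shown to satisfy $m>s$. The entire argument is then an analysis of when a quadratic with positive leading coefficient can be nonpositive at a point $m$ exceeding $s$.

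Under hypothesis (a) the contradiction is immediate once one records the identity
\[
2(|B|-2)\Big(\tfrac{1}{8}(2s-1)^{2}|B|-\tfrac{1}{4}(2s-1)+\tfrac{(s-1)^{2}}{2(|B|-2)}\Big)=\big((s-\tfrac{1}{2})|B|-s\big)^{2}=\beta^{2},
\]
which follows by expanding and using $\tfrac{1}{4}(2s-1)^{2}=(s-\tfrac{1}{2})^{2}$ and $\tfrac{1}{2}(2s-1)=s-\tfrac{1}{2}$. Since $4\alpha=2(|B|-2)$, this says precisely that hypothesis (a) is the assertion $|A|>\beta^{2}/(4\alpha)$, i.e.\ that the discriminant $\beta^{2}-4\alpha|A|$ is negative; hence $\alpha m^{2}-\beta m+|A|>0$ for all real $m$, contradicting the displayed inequality.

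Under hypothesis (b) I would first weaken $|A|$ down to $\gamma_{0}:=\frac{1}{2}s(s-1)|B|+s$ by (\ref{ideal-A-bound}); this only decreases the left side, so the quadratic $q(t):=\alpha t^{2}-\beta t+\gamma_{0}$ still satisfies $q(m)\le 0$ (this is exactly (\ref{bound-oops}) with $n'=2$). As $\alpha>0$ and $q(m)\le 0$ with $m>s$, the roots $r_{1}\le r_{2}$ of $q$ are real and $s<m\le r_{2}$. The key computation is that $q(s)=s>0$: thus $s\notin[r_{1},r_{2}]$, and together with $s<r_{2}$ this forces $s<r_{1}$, whence the vertex $\beta/(2\alpha)$ of $q$ exceeds $s$. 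Since $\beta-2\alpha s=s-\frac{|B|}{2}$, this is the same as $|B|<2s$. Meanwhile, reality of $r_{1},r_{2}$ requires the discriminant $\beta^{2}-4\alpha\gamma_{0}=\frac{1}{4}|B|^{2}-3s|B|+s^{2}+4s$ to be nonnegative, i.e.\ $|B|^{2}-12s|B|+4s^{2}+16s\ge 0$; because $|B|<2s<6s+4\sqrt{2s^{2}-s}$ this forces $|B|\le 6s-4\sqrt{2s^{2}-s}$. But hypothesis (b), $|B|\ge\frac{2s+4}{3}$, rules this out: the inequality $\frac{2s+4}{3}>6s-4\sqrt{2s^{2}-s}$ is equivalent, after clearing denominators and squaring the two positive sides, to $32s^{2}-16s-16>0$, that is $(2s+1)(s-1)>0$, which holds for $s\ge 3$.

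For the "consequently", note that the basis $X$ (hence the direction $Z_{1}=\R x_{1}$) is arbitrary: if some line $Z$ had $|\phi_{Z}(B)|=2$, then taking $Z_{1}=Z$ makes $n=2$, and (\ref{la-lb-bound}) gives (\ref{Bound-unmanipulatedform}) with $n'=n=2$, contradicting what was just proved; hence $|\phi_{Z}(B)|\ge 3$ for every line $Z$. I expect the main obstacle to be the bookkeeping in case (b): one must extract from the single inequality $q(m)\le 0$ (at a fixed $m>s$) both the discriminant condition and the vertex condition $|B|<2s$, and then recognise that the harmless-looking fact $q(s)=s$ is exactly what makes these two constraints jointly restrictive enough for (b) to close the argument. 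Case (a) hinges entirely on spotting the identity above, after which it is a one-line discriminant computation.
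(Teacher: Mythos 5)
Your proof is correct. Case (a) coincides with the paper's argument in all essentials: both translate hypothesis (a) into the statement that the discriminant of the quadratic $\bigl(\tfrac{|B|}{2}-1\bigr)m^2-\bigl((s-\tfrac12)|B|-s\bigr)m+|A|$ is negative, which is incompatible with the quadratic being nonpositive at $m$.

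For case (b), however, you take a genuinely different route from the paper. The paper observes that under (b) the vertex of the quadratic $q(t)=\bigl(\tfrac{|B|}{2}-1\bigr)t^2-\beta t+\gamma_0$ sits at $s-\tfrac12+\tfrac{s-1}{|B|-2}\le s+1$, so (since $m\ge s+1$, $m$ an integer exceeding $s$) the quadratic is minimized on the admissible range at $m=s+1$; evaluating there gives $2q(s+1)=2|B|-2>0$ at once, and one is done. You instead use only $m>s$, spot the nice identity $q(s)=s>0$ to conclude the vertex exceeds $s$, translate that into $|B|<2s$, then impose nonnegativity of the discriminant $\tfrac14|B|^2-3s|B|+s^2+4s\ge 0$ to force $|B|\le 6s-4\sqrt{2s^2-s}$, and finally check that this is incompatible with $|B|\ge\tfrac{2s+4}{3}$ via $(2s+1)(s-1)>0$. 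Both arguments are sound; the paper's is shorter and exploits integrality of $m$ plus a single evaluation, while yours trades that for the observation $q(s)=s$ and an extra round of quadratic bookkeeping (solving the discriminant inequality in $|B|$). Your version has the mild aesthetic advantage of making manifest where the threshold $\tfrac{2s+4}{3}$ comes from as a root comparison; the paper's has the advantage of producing the contradiction with essentially no computation once the vertex bound is in hand. Your handling of the "consequently" clause is also correct and matches the paper's implicit reasoning: for any line $Z$ with $|\phi_Z(B)|=2$, choosing $x_1$ along $Z$ gives $n=2$, and \eqref{la-lb-bound} then produces \eqref{Bound-unmanipulatedform} with $n'=2$, which is impossible.
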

\begin{proof} We know that (\ref{Bound-unmanipulatedform}) holds with
$n'=n$. Thus we need only prove the first part of the claim. Suppose
to the contrary that (\ref{Bound-unmanipulatedform}) holds with
$n'=2$. Thus considering (\ref{Bound-m-quadform-original}) as a
quadratic in $m$, we conclude that the discriminant is nonnegative,
i.e., that \ber|A|&\leq&\label{partduo}
\frac{(s|B|-\frac{|B|}{2}-s)^2}{2|B|-4}=\frac{(2s-1)^2|B|-4(2s-1)s|B|+4s^2}{8|B|-16}\\&=&
\frac{1}{8}(2s-1)^2|B|-\frac{1}{4}(2s-1)+\frac{(s-1)^2}{2|B|-4},\label{partuno}\eer
which contradicts the hypothesis of (a). Thus we may assume the
hypothesis of (b) holds. From (\ref{bound-oops}), we have
\be\label{parttree}(|B|-2)m^2-(2s|B|-|B|-2s)m+s(s-1)|B|+2s\leq 0.\ee
Considering (\ref{parttree}) as a quadratic in $m$, we see that its
minimum occurs for
$$m=\frac{(2s-1)|B|-2s}{2|B|-4}=s-\frac{1}{2}+\frac{s-1}{|B|-2}.$$ However, the
hypothesis $|B|\geq \frac{2s+4}{3}$ of (b) implies that
$s-\frac{1}{2}+\frac{s-1}{|B|-2}\leq s+1$. Consequently, since
$m\geq s+1$, we conclude that (\ref{parttree}) is minimized for the
boundary value $m=s+1$, whence $$0\geq
(|B|-2)(s+1)^2-(2s|B|-|B|-2s)(s+1)+s(s-1)|B|+2s=2|B|-2,$$
contradicting that $|B|\geq 3$, and completing the claim.\end{proof}

\begin{claim}\label{n'-not-3} If (\ref{Bound-unmanipulatedform}) holds with $n'=3$, then
$|B|\leq 6$; consequently, if $|B|\geq 7$, then $|\phi_Z(B)|\geq 4$
for any one-dimensional subspace $Z$.\end{claim}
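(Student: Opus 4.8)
The plan is to imitate the proof of Claim~\ref{n-not-2}, now with $n'=3$ in place of $n'=2$. First I would dispose of the ``consequently'' clause by reducing it to the first assertion: were $|\phi_Z(B)|=3$ for some one-dimensional subspace $Z$, then, using the already-established fact $|\phi_Z(A)|>s$, inequality (\ref{la-lb-bound}) applied with $Z$ together with the contrary assumption that (\ref{Goool-Aisbig}) fails would give (\ref{Bound-unmanipulatedform}) with $n'=3$, hence $|B|\le 6$; combined with Claim~\ref{n-not-2} (which gives $|\phi_Z(B)|\ge 3$ for every $Z$), this yields $|\phi_Z(B)|\ge 4$ for every $Z$ as soon as $|B|\ge 7$. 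So it suffices to show that (\ref{Bound-unmanipulatedform}) with $n'=3$ forces $|B|\le 6$.

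To prove that, I would argue by contradiction: assume $|B|\ge 7$, so in particular $|B|>n'=3$. Then the quadratic in $m$ appearing in (\ref{bound-oops}) with $n'=3$ has positive leading coefficient $\tfrac{|B|}{3}-1$, and it is nonpositive at the actual value of $m$; hence its discriminant is nonnegative. This is precisely (\ref{the}), and therefore (\ref{duhdumm}), evaluated at $M=\tfrac{|B|}{3}+3$. (As in Claim~\ref{n-not-2}, it is the discriminant condition that does the work here; one cannot instead finish by noting that the quadratic in $m$ is minimized at the boundary value $m=s+1$, since for the relevant parameter ranges its vertex may lie strictly above $s+1$.)

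It then remains to substitute $M=\tfrac{|B|}{3}+3$ into (\ref{duhdumm}). After clearing the denominator (multiplying through by $9$) I expect this to reduce to
\[
(4-3s^2)|B|^2+(18s^2-30s-12)|B|+(9s^2+54s+117)\ \ge\ 0 .
\]
Since $4-3s^2<0$ for $s\ge 2$, the left-hand side is a downward-opening parabola in $|B|$; evaluated at $|B|=7$ it equals $229-12s^2-156s$, which is negative for every $s\ge 3$. Hence the displayed inequality fails for all $|B|\ge 7$, contradicting (\ref{duhdumm}); therefore $|B|\le 6$.

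The only delicate step is this final computation: performing the substitution into (\ref{duhdumm}) without sign errors, and then checking that the resulting quadratic in $|B|$ is negative at $|B|=7$ uniformly in $s\ge 3$ (equivalently, that its larger root stays below $7$, which follows at once from the negativity of the $|B|^2$-coefficient together with its negativity at $|B|=7$). Everything else is a routine transcription of the $n'=2$ argument.
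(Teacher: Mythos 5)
Your approach is essentially the paper's: substitute $M=\tfrac{|B|}{3}+3$ into (\ref{duhdumm}) and derive a contradiction with $|B|\geq 7$. Two small corrections, though. First, an arithmetic slip: multiplying through by $9$, the constant term should be $9s^2+54s+9$, not $9s^2+54s+117$; the value at $|B|=7$ is then $121-12s^2-156s$ (still negative for all $s\geq 3$, so the conclusion is unaffected). Second, your parenthetical justification that the larger root of the resulting quadratic lies below $7$ does not ``follow at once'' from negativity of the leading coefficient together with negativity at $|B|=7$: a downward parabola negative at a point could equally well have both roots to the \emph{right} of that point. You also need that the quadratic is positive at $|B|=0$, i.e.\ that the constant term $9s^2+54s+9>0$; then the roots straddle $0$, only one root is positive, and negativity at $|B|=7$ forces that root to be $<7$, so the quadratic is negative for all $|B|\geq 7$. (Alternatively, compute the vertex $\tfrac{9s^2-15s-6}{3s^2-4}<7$ for $s\geq 3$.) For comparison, the paper instead drops several manifestly nonpositive terms to obtain the clean bound $-\tfrac{23}{27}s^2|B|^2+6s^2|B|+3s^2\geq 0$, which yields $|B|\leq 7$ directly and then rules out $|B|=7$ by a separate check; this avoids any root-location reasoning but is otherwise the same calculation.
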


\begin{proof}
As in the previous claim, we need only prove the first part.
Assuming (\ref{Bound-unmanipulatedform}) holds with $n'=3$, so that
$M=\frac{|B|}{3}+3$, it follows in view of (\ref{duhdumm}) and
$s\geq 3$ that \ber 0&\leq&
-s^2|B|^2-10s|B|+6s^2|B|+\frac{4}{3}|B|^2-4|B|+3+18s+3s^2\label{uuy}\\\nn&\leq&
-s^2|B|^2+6s^2|B|+\frac{4}{3}|B|^2+3s^s=-(\frac{23}{27}+\frac{4}{27})s^2|B|^2
+6s^2|B|+\frac{4}{3}|B|^2+3s^2\\\nn&\leq& -\frac{23}{27}s^2|B|^2
+6s^2|B|+3s^2,\eer which implies $|B|\leq 7$. However, it can be
individually checked that (\ref{uuy}) cannot hold for $|B|=7$,
completing the claim.
\end{proof}

\begin{claim}\label{n'-not-4} If (\ref{Bound-unmanipulatedform}) holds with $n'=4$, then
$|B|\leq 8$; consequently, if $|B|\geq 9$, then $|\phi_Z(B)|\geq 5$
for any one-dimensional subspace $Z$.\end{claim}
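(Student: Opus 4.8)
The plan is to follow verbatim the template already used for Claims \ref{n-not-2} and \ref{n'-not-3}. As in those claims, it suffices to establish the first assertion, namely that (\ref{Bound-unmanipulatedform}) holding with $n'=4$ forces $|B|\leq 8$: the ``consequently'' clause then follows formally, since if $|B|\geq 9$ and $|\phi_Z(B)|=4$ for some one-dimensional subspace $Z$, we may choose an ordered basis $X=(x_1,x_2)$ of $\R^2$ with $\R x_1=Z$, so that $n=|\phi_{Z}(B)|=4$, whence (\ref{la-lb-bound}) gives (\ref{Bound-unmanipulatedform}) with $n'=n=4$ and the first assertion yields the contradiction $|B|\leq 8$; combined with Claim \ref{n'-not-3} (which gives $|\phi_Z(B)|\geq 4$ as $|B|\geq 9\geq 7$), we conclude $|\phi_Z(B)|\geq 5$.

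To prove the first assertion I would substitute $n'=4$, i.e.\ $M=\frac{|B|}{4}+4$, into (\ref{duhdumm}) (we may assume $|B|\geq 5$, as otherwise $|B|\leq 8$ is automatic, and $|B|>n'$ is exactly the hypothesis under which (\ref{the}) and hence (\ref{duhdumm}) were obtained). After multiplying by $16$ to clear denominators and collecting terms, I expect to obtain
\be\label{plug-n4}
-8s^2|B|^2+9|B|^2+64s^2|B|-56s|B|-48|B|+128s+64+16s^2\geq 0 .
\ee

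I would then coarsen (\ref{plug-n4}) exactly as in Claim \ref{n'-not-3}: since $|B|\geq 3$ and $s\geq 3$, one has $-56s|B|+128s+64\leq -40s+64<0$ and $-48|B|<0$, so those terms may be dropped, while $s\geq 3$ gives $9|B|^2\leq s^2|B|^2$; hence (\ref{plug-n4}) forces $0< -7s^2|B|^2+64s^2|B|+16s^2$, that is $7|B|^2-64|B|-16<0$, which yields $|B|\leq 9$. It then remains only to rule out $|B|=9$, and here substituting $|B|=9$ directly into (\ref{plug-n4}) collapses it to $-56s^2-376s+361\geq 0$, which fails for every $s\geq 1$ (in particular for $s\geq 3$); therefore $|B|\leq 8$.

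I do not anticipate a genuine obstacle: this is the same three-line computation as for $n'=2,3$. The only point requiring any care is selecting which negative terms to discard and how to absorb $9|B|^2$ into $s^2|B|^2$ so that the quadratic estimate comes out as $|B|\leq 9$ rather than something weaker; a lossier bound would merely enlarge the finite set of residual values of $|B|$ to be eliminated by hand, each such elimination reducing, as in the $|B|=9$ step, to checking the sign of a one-variable quadratic in $s$ on $s\geq 3$.
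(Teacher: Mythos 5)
Your proposal is correct and follows essentially the same approach as the paper: substituting $M=\frac{|B|}{4}+4$ into (\ref{duhdumm}), dropping the negative lower-order terms $-56s|B|-48|B|+128s+64$, absorbing $9|B|^2$ into $s^2|B|^2$ via $s\geq 3$ to get $7|B|^2-64|B|-16<0$ and hence $|B|\leq 9$, then eliminating $|B|=9$ by direct substitution (your inequality $-56s^2-376s+361\geq 0$ visibly fails for all $s\geq 1$). Your (\ref{plug-n4}) is exactly $16$ times the original expansion (the paper writes it multiplied by $2$), and the computations and the reduction of the ``consequently'' clause to the first assertion match the paper's argument.
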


\begin{proof}
Assuming (\ref{Bound-unmanipulatedform}) holds with $n'=4$, so that
$M=\frac{|B|}{4}+4$, it follows in view of (\ref{duhdumm}) and
$s\geq 3$ that \ber\label{er}
0&\leq&-s^2|B|^2-7s|B|+8s^2|B|+\frac{9}{8}|B|^2-6|B|+8+16s+2s^2\\\nn&<&
-s^2|B|^2+8s^2|B|+\frac{9}{8}|B|^2+2s^2=-(\frac{7}{8}+\frac{1}{8})s^2|B|^2
+8s^2|B|+\frac{9}{8}|B|^2+2s^2\\\nn&\leq& -\frac{7}{8}s^2|B|^2
+8s^2|B|+2s^2\eer which implies $|B|\leq 9$. However, it can be
individually verified that (\ref{er}) cannot hold for $|B|=9$,
completing the claim.
\end{proof}

\begin{claim}\label{useful-claim} If $|B|\geq 7$ and $Z$ is any one-dimensional subspace, then
\ber\label{m-is-big} |\phi_Z(A)|&>&\frac{s|B|}{4},\; \mbox{ when }s\geq 4\\
|\phi_Z(A)|&>&\frac{s|B|}{5},\;\label{m-is-big-for-s=3} \mbox{ when
}s=3.\eer\end{claim}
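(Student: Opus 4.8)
The plan is to stay inside the standing contradiction hypothesis that (\ref{Goool-Aisbig}) fails and, from it, to bound $|\phi_Z(A)|$ from below for an arbitrary line $Z$. Put $m=|\phi_Z(A)|$ and $n=|\phi_Z(B)|$. Since $|B|\ge 7$, Claim \ref{n'-not-3} gives $n\ge 4$, and Claim \ref{n'-not-4} gives $n\ge 5$ once $|B|\ge 9$. Because (\ref{Goool-Aisbig}) is false, the derivation of (\ref{bound-oops}) applies with $n'=n$, so combining (\ref{la-lb-bound}), (\ref{ideal-A-bound}) and the negation of (\ref{Goool-Aisbig}) gives the inequality
\[
\Bigl(\tfrac{|B|}{n}-1\Bigr)m^{2}-\Bigl(s|B|-|B|+\tfrac{|B|}{n}+n-s-2\Bigr)m+(n-1)\Bigl(\tfrac12 s(s-1)|B|+s\Bigr)\le 0 .
\]
If $n=|B|$ the leading coefficient vanishes, the inequality becomes linear in $m$, and it forces $m\ge\tfrac{(|B|-1)(\frac12 s(s-1)|B|+s)}{\,s|B|-s-1\,}>\tfrac12(s-1)|B|>\tfrac14 s|B|$ for $s\ge 3$, which already gives the claim. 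So we may assume $1<n\le|B|-1$, so the displayed inequality is a genuine upward quadratic in $m$; hence $m$ lies between its two roots and, in particular, its discriminant is nonnegative.

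That nonnegativity is precisely (\ref{the}), i.e.\ (on expanding) inequality (\ref{duhdumm}), which is quadratic in $M:=\tfrac{|B|}{n}+n$ with positive leading and linear coefficients and hence fails for every $M$ below some threshold $M_{0}$; a short computation puts $M_{0}$ at order $\tfrac12|B|$. As $M(n)=\tfrac{|B|}{n}+n$ is convex with minimum near $\sqrt{|B|}$, the requirement $M\ge M_{0}$ together with $n\ge 4$ confines $n$, once $|B|$ is large, to the upper branch $n\gtrsim\tfrac12|B|$, every intermediate value of $n$ being excluded because the displayed quadratic would then have negative discriminant. The finitely many small values of $|B|$ that escape this — and, for them, all $s$ — are treated by substituting directly into (\ref{duhdumm}) and the displayed inequality, exactly as in the proofs of Claims \ref{n'-not-3} and \ref{n'-not-4}.

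With $n$ forced into a neighbourhood of $|B|$, it remains to show that the smaller root $m_{-}$ of the displayed quadratic exceeds $\tfrac14 s|B|$ (respectively $\tfrac15 s|B|$ when $s=3$); since $m\ge m_{-}$, this is the claim. Because the quadratic opens upward, $m_{-}>\tfrac14 s|B|$ follows from: (i) the left-hand side of the displayed inequality is strictly positive at $m=\tfrac14 s|B|$; and (ii) $\tfrac14 s|B|$ lies strictly to the left of the vertex $\tfrac1{2(|B|/n-1)}\bigl(s|B|-|B|+\tfrac{|B|}{n}+n-s-2\bigr)$. Clearing the positive factor $n(|B|-n)$ turns (i) and (ii) into polynomial inequalities in $|B|,n,s$; for $n$ near $|B|$ the leading coefficient $\tfrac{|B|}{n}-1$ stays bounded, the vertex has order $\tfrac12(s-1)|B|>\tfrac14 s|B|$, and both inequalities go through (the weaker constant $\tfrac15 s|B|$ soaking up the small loss at $s=3$). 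I expect the main obstacle to be the bookkeeping in this dichotomy: calibrating the threshold $M_{0}$ from (\ref{duhdumm}) against the lower bounds on $n$ from Claims \ref{n-not-2}--\ref{n'-not-4} so that no admissible $n$ escapes both the discriminant obstruction and the smaller-root estimate (i)--(ii), and dispatching by hand the small values of $|B|$ for which the asymptotics are indecisive.
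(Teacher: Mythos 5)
Your strategy is genuinely different from the paper's. The paper's proof assumes to the contrary that $m\le \frac{s|B|}{4}$ (resp.\ $\frac{s|B|}{5}$), which together with $|A|\ge\frac12 s(s-1)|B|+s$ forces the continuous minimizer $l=\sqrt{m(m-1)|B|/(|A|-m)}$ of the bound (\ref{la-lb-bound}) to satisfy $l<\frac{\sqrt 5}{5}|B|$ (resp.\ $\frac{\sqrt{15}}{10}|B|$) \emph{uniformly in $s$}. After dispatching $l\le 3$ via Claims \ref{n-not-2} and \ref{n'-not-3}, this caps $M=\frac{|B|}{l}+l$ by a linear function of $|B|$ alone, and substituting that cap into (\ref{duhdumm}) produces $|B|\le 7$ in one polynomial computation, a contradiction. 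The crucial feature is that the contradiction hypothesis converts into a bound on $l$ (hence on $M$) that does not involve $n$ and is uniform in $s$, so the endgame is a single discriminant check.

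Your route instead works with the actual $n=|\phi_Z(B)|$, uses the nonnegativity of the discriminant of (\ref{bound-oops}) in $m$ to constrain $M(n)$, and then tries to show directly that the smaller root $m_-$ of that quadratic exceeds $\frac{s|B|}{4}$. Conceptually this is sound: $m$ does lie between the roots, and $m_->\frac{s|B|}{4}$ would prove the claim. Spot checks (e.g.\ $|B|=7,n=4$ for various $s$; $|B|=100,s=3,n\in\{50,75,100\}$) confirm $m_->\frac{s|B|}{4}$ or $\frac{s|B|}{5}$ in each case, so I don't see a counterexample to your plan. The argument buys you directness (no contradiction hypothesis on $m$) at the price of having to track $n$.

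However, there is a real gap in how you propose to close it. First, the ``confinement of $n$ to the upper branch $n\gtrsim \frac12|B|$'' is only valid asymptotically in $|B|$ for each fixed $s$; for small $|B|$ the positive root $M_0$ of (\ref{duhdumm}) can fall below $M(4)$ (indeed for $|B|=7$ one gets $M_0\to\frac{|B|}{2}+2-\frac{1}{2|B|}\approx 5.43$ as $s\to\infty$, which is below $M(4)=5.75$), so $n=4$ or $5$ survives and $n$ is nowhere near $|B|$. You acknowledge these ``finitely many small $|B|$'' must be handled separately, but you then claim they are ``treated by substituting directly, exactly as in Claims \ref{n'-not-3} and \ref{n'-not-4}.'' That analogy does not hold: Claims \ref{n'-not-3} and \ref{n'-not-4} fix $n'$, bound the $s$-dependent terms, and conclude that $|B|$ is small; here you need the smaller root $m_-$ of a quadratic whose coefficients depend on $s$ to exceed a threshold that is itself linear in $s$, uniformly over all $s\ge 3$, for each of these small $|B|$ and each surviving $n$. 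That is a different shape of inequality (a statement about all $s$ for fixed $(|B|,n)$, not a bound on $|B|$), and ``substituting directly'' does not reduce it. Second, the intermediate quantitative claims in your sketch are off: the vertex of the displayed quadratic is $\frac{n\bigl(s|B|-|B|+\frac{|B|}{n}+n-s-2\bigr)}{2(|B|-n)}$, which for $n$ close to $|B|$ is of order $(s-1)|B|^2/(|B|-n)$, not $\frac{s-1}{2}|B|$; and the leading coefficient $\frac{|B|}{n}-1$ tends to $0$ there rather than ``staying bounded away from it,'' which matters for the root estimate. None of this dooms the plan, but these are exactly the places where the bookkeeping you flag as the obstacle must actually be done, and the sketch as written does not do it. The paper's maneuver of controlling $l$ via the contradiction hypothesis $m\le\frac{s|B|}{4}$ is precisely what makes the $s$-uniformity automatic, and by discarding it you take on an additional two-parameter verification that the proposal leaves open.
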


\begin{proof} Suppose to the contrary that
\ber\label{m-is-too-small} m&\leq&\frac{s|B|}{4},\; \mbox{ when }s\geq 4\\
m&\leq&\frac{s|B|}{5},\;\label{m-is-too-small-even-for-s=3} \mbox{
when }s=3.\eer Note (\ref{m-is-too-small}) and
(\ref{m-is-too-small-even-for-s=3}) each implies $m<|A|$. Let
$l:=\sqrt{\frac{m(m-1)|B|}{|A|-m}}$.

If $s\geq 4$, then (\ref{ideal-A-bound}) and (\ref{m-is-too-small})
imply \be\label{l-is-small}l\leq
\sqrt{\frac{m^2|B|}{\frac{1}{2}s(s-1)|B|+s-m}}<
\sqrt{\frac{s^2|B|^3/16}{\frac{1}{2}s(s-1)|B|-\frac{s|B|}{4}}}=
\frac{|B|}{4}\sqrt{\frac{s^2}{\frac{1}{2}s^2-\frac{3}{4}s}}\leq
\frac{\sqrt{5}}{5}|B|.\ee If $s=3$, then (\ref{ideal-A-bound}) and
(\ref{m-is-too-small-even-for-s=3}) imply
\be\label{l-is-small-3}l\leq
\sqrt{\frac{m^2|B|}{\frac{1}{2}s(s-1)|B|+s-m}}<
\sqrt{\frac{\frac{9}{25}|B|^3}{3|B|-\frac{3}{5}|B|}}\leq
\frac{\sqrt{15}}{10}|B|.\ee

From the proof of Theorem \ref{THEbounds}, we know that $l$
minimizes (\ref{la-lb-bound}), and thus that
(\ref{Bound-unmanipulatedform}) holds with $n'=l$.
If $l\leq 3$, then (\ref{la-lb-bound}) will be minimized for either
$n'=1$, $n'=2$ or $n'=3$, whence Claims \ref{n-not-2} and
\ref{n'-not-3} imply $|B|\leq 6$. Note that
$\frac{1}{3}<\max\{\frac{\sqrt{5}}{5},\,\frac{\sqrt{15}}{10}\}$.
Hence if $s\geq 4$, then (\ref{l-is-small}) implies that
\be\label{M-upper} M=\frac{|B|}{l}+l\leq
\frac{5}{\sqrt{5}}+\frac{\sqrt{5}}{5}|B|<\frac{9}{20}|B|+\frac{9}{4},\ee
while if $s=3$, then (\ref{l-is-small-3}) implies that
\be\label{M-upper-3} M=\frac{|B|}{l}+l\leq
\frac{10}{\sqrt{15}}+\frac{\sqrt{15}}{10}|B|<\frac{2}{5}|B|+\frac{13}{5}.\ee
Combining (\ref{M-upper}) and (\ref{duhdumm}) and applying the
estimate $s\geq 4$, we obtain \ber\label{extra-umph} 0&\leq&
-\frac{1}{10}s^2|B|^2-\frac{31}{10}s|B|+\frac{1}{2}s^2|B|+\frac{121}{400}|B|^2
-\frac{11}{40}|B|+\frac{1}{16}+\frac{9}{2}s+s^2\\&\leq&\nn
-\frac{1}{10}s^2|B|^2+\frac{1}{2}s^2|B|+\frac{121}{400}|B|^2+s^2\leq
-(\frac{19}{240}+\frac{1}{48})s^2|B|^2+\frac{1}{2}s^2|B|+\frac{1}{3}|B|^2+s^2\\\nn
&\leq& -\frac{19}{240}s^2|B|^2+\frac{1}{2}s^2|B|+s^2,\eer which
implies $|B|\leq 7$. However, individually checking the case $|B|=7$
in (\ref{extra-umph}) shows that in fact $|B|\leq 6$. Combining
(\ref{M-upper-3}) and (\ref{duhdumm}) and assuming $s=3$, we obtain
$$-36|B|^2+12|B|+624\geq 0,$$ which implies $|B|\leq 4$, completing the claim.
\end{proof}

\begin{claim}\label{A-has-s-collinear-points} There are $s$
collinear points in $A$.\end{claim}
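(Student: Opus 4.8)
The plan is to argue by contradiction: assume $A$ contains no $s$ collinear points and derive a violation of the upper bound $|A+B|\le |A|+s|B|-s-1$ recorded in (\ref{Bound-unmanipulatedform}). The starting point is structural. If no line meets $A$ in $s$ points, then for every one-dimensional subspace $Z$ each of the $|\phi_Z(A)|$ lines parallel to $Z$ carries at most $s-1$ points of $A$, so $|\phi_Z(A)|\ge \lceil |A|/(s-1)\rceil$, which by (\ref{ideal-A-bound}) is at least $\tfrac12 s|B|+\tfrac{s}{s-1}>\tfrac12 s|B|$. Equivalently, after full compression along any such $Z$ the largest row of $\C_X(A)$ has at most $s-1$ points, while $\C_X(A)$ has $m=|\phi_Z(A)|>\tfrac12 s|B|$ rows. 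This is the extra leverage that the contradiction hypothesis buys us, on top of the bounds $m>s$ and $|\phi_Z(B)|\ge 3$ already in force.

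Next I would feed this into (\ref{la-lb-bound}). Choosing $Z$ so that $n:=|\phi_Z(B)|$ is as small as possible (since $|B|\ge 3$, the direction of a line through two points of $B$ gives $n\le|B|-1$, and $n\ge 3$ by Claim \ref{n-not-2}, with $n\ge 4$ and $n\ge 5$ when $|B|\ge 7$ and $|B|\ge 9$ by Claims \ref{n'-not-3} and \ref{n'-not-4}), I write the estimate in the form $|A+B|\ge |A|+\tfrac{(n-1)|A|}{m}+\bigl(\tfrac{|B|}{n}-1\bigr)(m+n-1)$, substitute $m\ge |A|/(s-1)$ and $|A|\ge \tfrac12 s(s-1)|B|+s$, and clear denominators. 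The desired inequality $|A+B|>|A|+s(|B|-1)$ then becomes a polynomial inequality in $|B|$ whose leading coefficient in $|B|$ is nonnegative (it comes from a term of size $\tfrac{s|B|^2}{2n}$), hence it holds once $|B|$ exceeds an explicit threshold; together with Claims \ref{n-not-2}--\ref{useful-claim}, which keep $m$ and $n$ away from the dangerous small values in that range, this disposes of all but finitely many pairs $(s,|B|)$.

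The hard part is precisely the small‑$|B|$, borderline regime $|A|\approx \tfrac12 s(s-1)|B|$, where the compressed estimate falls short of the target by a bounded additive amount — when $B$ is near general position, $n$ is forced close to $|B|$ and (\ref{la-lb-bound}) only yields roughly $|A|+s(|B|-1)-O(1)$. To close this gap I would exploit the two mutually exclusive hypotheses. Under (b), the constraint $|B|\ge \tfrac{2s+4}{3}$ bounds $s$ in terms of $|B|$, so only finitely many $(s,|B|)$ remain, and each is settled by replacing the averaged bound (\ref{la-lb-bound}) with the exact calc‑form (\ref{calc-form-wellaligned}) of the compression — using that all rows of $\C_X(A)$ are of length at most $s-1$ — together with Theorem \ref{thm-Freiman-3k-3-stanchescu_version} applied to individual rows to recover the missing additive slack. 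Under (a), the stronger lower bound $|A|>\tfrac18(2s-1)^2|B|-\tfrac14(2s-1)+\tfrac{(s-1)^2}{2(|B|-2)}$ — which is exactly the negation of the obstruction (\ref{partuno}) encountered in Claim \ref{n-not-2} — feeds directly back into the polynomial inequality above and supplies the slack without any case analysis. I expect this final bookkeeping, reconciling the two hypotheses with the borderline configurations, to be the main obstacle.
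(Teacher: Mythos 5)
Your opening observation is right and matches the paper's starting point: if no line meets $A$ in $s$ points, then every fibering gives $|\phi_Z(A)|\ge |A|/(s-1)>\tfrac12 s|B|$. You also correctly diagnose where the straightforward route breaks down: when $B$ is close to general position, $n=|\phi_Z(B)|$ is forced near $|B|$, and plugging $m\approx |A|/(s-1)$ into (\ref{la-lb-bound}) loses an $O(|B|)$ additive amount compared to the target $|A|+s(|B|-1)$. But your proposal for closing that gap is where the argument stalls, and the tools you reach for are not the ones that actually work.

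First, the paper's proof of this claim does not invoke hypotheses (a) or (b) at all; the contradiction is derived purely from $|A|\ge\tfrac12 s(s-1)|B|+s$. Your plan leans on (a)/(b) to finish the borderline regime, which signals you have not found the right extra input. Second, your suggestion to use (\ref{calc-form-wellaligned}) together with Theorem \ref{thm-Freiman-3k-3-stanchescu_version} ``applied to individual rows'' is not a concrete argument: (\ref{calc-form-wellaligned}) maximizes along diagonals, and the uniform bound $|A_i|\le s-1$ fed into it does not by itself recover the lost $O(|B|)$. What the paper actually does is branch on the structure of $B$: (i) if $B$ has three collinear points, a one-line estimate $|A+B|\ge |A|+2|\phi_Z(A)|$ finishes it; (ii) if $B$ has no three collinear points but $h_1(B,B)<|B|-1$, a direct count using four special elements of $B$ and the extremal layers of $A$ in two directions gives $|A+B|\ge |A|+|\phi_{Z_1}(A)|+|\phi_{Z_2}(A)|$; (iii) if $h_1(B,B)=|B|-1$, after compression one has $n=|B|-1$, $|B_1|=2$, $|B_i|=1$ for $i>1$, and a row-by-row estimate gives $|A+B|\ge |A|+(|B|-2)|A_j|+j$ for \emph{each} $j$. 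It is this family of inequalities, indexed by $j$, that is the missing ingredient: under the negation of (\ref{Goool-Aisbig}) it yields the \emph{decreasing} sequence of bounds $|A_j|\le s+\frac{s-j-1}{|B|-2}$, and summing these telescoping bounds (not the flat bound $|A_j|\le s-1$) gives $|A|\le\tfrac12 s(s-1)|B|-s+1$, contradicting (\ref{ideal-A-bound}). Without this decreasing sequence of row bounds, the ``$|A_i|\le s-1$ plus compression'' route you sketch simply does not produce enough savings.
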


\begin{proof}
Suppose instead that $A$ contains no $s$ collinear points. Then it
follows from the pigeonhole principle and (\ref{ideal-A-bound}) that
\be\label{misbigII} |\phi_Z(A)|> \frac{1}{2}s|B|+1,\ee for any
one-dimensional subspace $Z$. Consequently, if $B$ has at least $3$
collinear points contained in a line parallel to (say) $Z$, then
Theorem \ref{CDT-for-Z} implies $$|A+B|\geq |A|+2|\phi_Z(A)|>
|A|+2(\frac{1}{2}s|B|+1)=|A|+s|B|+2,$$ as desired. Therefore we may
assume $B$ contains no $3$ collinear points.

Suppose $h_1(B,B)<|B|-1$. Then, since $B$ contains no $3$ collinear
points, it follows that there exists a pair of parallel lines each
containing $2$ points of $B$. Hence, by an appropriate affine
transformation, we may w.l.o.g assume
$(0,0),\,(1,0),\,(0,1),(x,1)\in B$, for some $x>0$. Let $x_1=(1,0)$
and $x_2=(0,1)$. Let $A_1\subseteq A$ be the subset obtained by
choosing for each element of $\phi_{Z_1}(A)$ the element of $A$ with
largest $x_1$-coordinate. Let $A_2\subseteq A$ be likewise defined
using $Z_2$ instead of $Z_1$. Note $A_1+(1,0)$ contains
$|\phi_{Z_1}(A)|$ points in $A+B$ disjoint from $A$.

Let $z+\R x_1$ be an arbitrary line parallel to $\R x_1$, and let
$a_1,\ldots,a_r$ be the elements of $A_2\cap (z+\R x_1)$. Moreover,
if $A_1\cap (z+(0,1)+\R x_1)$ is nonempty, then there is a unique
element $y\in A_1\cap (z+(0,1)+\R x_1)$, and so let $a_s,\ldots,a_r$
be those elements of $A_2\cap (z+\R x_1)$ with $\phi_{Z_1}(a_i)\geq
\phi_{Z_1}(y)+1$. If $A_1\cap (z+(0,1)+\R x_1)$ is empty, let
$s=r+1$. Note that for each $a_i$, $i<s$, the element $a_i+(0,1)$ is
an element of $A+B$ contained in neither $A$ nor $A_1+(1,0)$, while
for each $a_i$, $i\geq s$, the element $a_i+(x,1)$ is an element of
$A+B$ contained in nether $A$ nor $A_1+(1,0)$ (since $x>0$).
Consequently, since $z$ is arbitrary and since  $A_1+(1,0)$ contains
$|\phi_{Z_1}(A)|$ points from $A+B$ disjoint from $A$, we conclude
that
$$|A+B|\geq |A+\{(0,0),\,(1,0),\,(0,1),\,(x,1)\}|\geq
|A|+|\phi_{Z_1}(A)|+|\phi_{Z_2}(A)|\geq |A|+s|B|+2,$$ where the
latter inequality follows by (\ref{misbigII}) applied both with
$Z=\R x_1$ and $Z=\R x_2$. Thus (\ref{Goool-Aisbig}) holds, as
desired, and so we may assume $h_1(B,B)=|B|-1$.

Choose $x_1$ such that $|\phi_{Z_1}(B)|<|B|$, and let $A'=\C_X(A)$,
$B'=\C_X(B)$, $A_i=A'\cap (\Z x_1+(i-1)x_2)$ and $B_j=B'\cap (\Z x_1
+(j-1)x_2)$, for $i=1,\ldots,m$ and $j=1,\ldots,n$. Note, since
$h_1(B,B)=|B|-1$ and $|\phi_{Z_1}(B)|<|B|$, that $n=|B|-1$,
$|B_1|=2$, and $|B_i|=1$ for $i>1$. Since $A$ contains no $s$
collinear points, we have $|A_i|\leq s-1$ for all $i$. Observe, for
$j=1,\ldots,m$, that we have the following estimate:
\begin{multline*}|A+B|\geq
\Sum{i=1}{j-1}|A_i+B_1|+\Sum{i=1}{|B|-1}|A_j+B_i|+\Sum{i=j+1}{m}|A_i+B_n|=
\\ |A|+(|B|-2)|A_j|+|B|+(j-1)|B_1|+(m-j)|B_n|-(m+|B|-2)=|A|+(|B|-2)|A_j|+j.
\end{multline*} Thus, assuming (\ref{Goool-Aisbig}) is false, we conclude
that \be\label{telescoping-bounds}  |A_j|\leq
\frac{s(|B|-1)-j-1}{|B|-2}=s+\frac{s-j-1}{|B|-2},\ee for
$j=1,\ldots,m$. Consequently, for $j$ such that $s+(k-1)(|B|-2)\leq
j\leq s+k(|B|-2)-1,$ where $k=1,2,\ldots$, we infer that
\be\label{wee1}|A_j|\leq s-k.\ee Note that \be\label{wee2}|A_j|\leq
s-1\ee for $j=1,\ldots, s-1$, as remarked earlier. Summing
(\ref{wee1}) and (\ref{wee2}) over all possible $j$, we conclude
that $$|A|\leq
(s-1)^2+(|B|-2)\Sum{k=1}{s-1}(s-k)=(s-1)^2+(|B|-2)\frac{s(s-1)}{2}=
\frac{1}{2}s(s-1)|B|-s+1,$$ contradicting (\ref{ideal-A-bound}), and
completing the claim.
\end{proof}

In view of Claim \ref{A-has-s-collinear-points}, choose $x_1$ so
that there are $s$ points on some line parallel to $\Z x_1$. Let
$A'=\C_X(A)$ and $B'=\C_X(B)$. Since $|\phi_{Z_1}(A)|\geq s$ and
since $A$ contains $s$ collinear points on a line parallel to $\Z
x_1$, it follows that $h_1(A',B')\geq h_1(A',A')\geq s$, whence $A'$
and $B'$ also satisfy the hypotheses of the theorem. Furthermore, if
$|A'+B'|\geq |A'|+s(|B'|-1)=|A|+s(|B|-1)$, then the proof is
complete in view of (\ref{well-align-lowerbound-fullycompressed}).
Thus we can w.l.o.g. assume $A=A'$ and $B=B'$ are compressed
subsets.

Let $A_i=A\cap(\Z x_1+(i-1)x_2)$ and $B_j=B\cap (\Z x_1+(j-1)x_2)$
for $i=1,\ldots,m$ and $j=1,\ldots,n$. By the same estimate used for
(\ref{once2}), we have \ber\nn
    |A+B|&\geq&
    |A|+|B|+(n-1)(|A_1|-1)+m(|B_{n}|-1)-|B_{n}|\\ &\geq&
    |A|+|B|-1+(n-1)(|A_1|-1)\label{wethree}.\eer

If $|B|\geq 9$, then Claims \ref{n-not-2}, \ref{n'-not-3} and
\ref{n'-not-4} imply $n\geq 5$, whence Claim \ref{useful-claim} and
(\ref{wethree}) imply that
$$|A+B|\geq |A|+|B|-1+4(\frac{s|B|+1}{4}-1)=|A|+(s+1)|B|-4,$$ if
$s\geq 4$, and that $$|A+B|\geq
|A|+|B|-1+4(\frac{3|B|+1}{5}-1)=|A|+\frac{17}{5}|B|-\frac{21}{5}>|A|+3|B|-1,$$
if $s=3$. In both cases (\ref{Goool-Aisbig}) follows, as desired. So
we may assume $|B|\leq 8$. In view of Claim \ref{n-not-2} applied
with $Z=\Z x_1$ and $Z=\Z x_2$, we infer that $|B|\geq 5$.

Using the estimate from (\ref{(l+1)/2-bound}) (with the roles of $A$
and $B$ reversed), we obtain
$$|A|+|B|-1+(|B_1|-1)\frac{m-1}{2}+(n-1)(\frac{|A|}{m}-1)\leq
|A+B|\leq |A|+s|B|-s-1.$$ Multiplying by $m$, applying
(\ref{ideal-A-bound}), and rearranging terms yields
$$\frac{|B_1|-1}{2}\cdot m^2-
(s|B|-|B|+\frac{|B_1|-3}{2}+n-s)m+(n-1)(\frac{1}{2}s(s-1)|B|+s)\leq
0.$$ Consequently, the discriminant of the above quadratic in $m$
must be nonnegative, implying \be\label{small-cases}
(s|B|-|B|+\frac{|B_1|-3}{2}+n-s)^2-(|B_1|-1)(n-1)(s(s-1)|B|+2s)\geq
0\ee

If $|B|=5$, then from Claim \ref{n-not-2}, applied with $Z=\Z x_1$
and $Z=\Z x_2$, we conclude $n=|B_1|=3$. Thus (\ref{small-cases})
implies $4s^2+4s-4\leq 0$, contradicting $s\geq 3$. If $|B|=7$, then
from Claims \ref{n-not-2} and \ref{n'-not-3}, applied with $Z=\Z
x_1$ and $Z=\Z x_2$, we conclude $n=|B_1|=4$. Thus
(\ref{small-cases}) implies $27s^2-15s-\frac{25}{4}\leq 0$,
contradicting $s\geq 3$. If $|B|=8$, then from Claims \ref{n-not-2}
and \ref{n'-not-3}, applied with $Z=\Z x_1$ and $Z=\Z x_2$, we
conclude $n\geq 4$ and $|B_1|\geq 4$. Thus (\ref{small-cases})
implies $23s^2-5s-\frac{49}{4}\leq 0$, contradicting $s\geq 3$.
Consequently, it remains only to handle the case $|B|=6$.

In view of Claim \ref{n-not-2} and by swapping the roles of $x_1$
and $x_2$ if necessary, we may assume $n=3$. Hence
(\ref{la-lb-bound}) implies that (\ref{Bound-unmanipulatedform})
holds with $n'=3$. Thus considering
(\ref{Bound-m-quadform-original}) as a quadratic in $m$, we conclude
that the discriminant is nonnegative, i.e., that
\ber|A|&\leq&\label{case6-partduo}\frac{(5s-3)^2}{8}=
\frac{1}{8}(2s-1)^2|B|-\frac{1}{4}(2s-1)+\frac{(s-1)^2}{2(|B|-2)}.\eer
This completes the proof in case (a) holds. From (\ref{the}), we
have \be\label{case6-parttree}0\leq (5s-3)^2-24s^2+16s=s^2-14s+9,\ee
which implies $s\geq 14$. Thus $|B|\geq \frac{2s+4}{3}\geq
\frac{32}{3}>6$, contradicting the hypothesis of (b), and completing
the proof.
\end{proof}

\end{document}